\newcommand\bbR{\mathbb{R}}
\newcommand\bbS{\mathbb{R}^{d\times d}_{\text{sym}}}
\newcommand\cC{\mathcal{C}}
\newcommand\cD{\mathcal{D}}
\newcommand\cF{\mathcal{F}}
\newcommand\cH{\mathcal{H}}
\newcommand\cP{\mathcal{P}}
\newcommand\cT{\mathcal{T}}
\newcommand\cX{\mathcal{X}}
\newcommand\bn{\boldsymbol{n}}
\newcommand\bF{\boldsymbol{f}}
\newcommand\bu{\boldsymbol{u}}
\newcommand\bv{\boldsymbol{v}}
\newcommand\bw{\boldsymbol{w}}
\newcommand\be{\boldsymbol{e}}
\newcommand\bx{\boldsymbol{x}}
\newcommand\bz{\boldsymbol{z}}
\newcommand\bp{\boldsymbol{p}}
\newcommand\bq{\boldsymbol{q}}
\newcommand\br{\boldsymbol{r}}
\newcommand\bs{\boldsymbol{s}}
\newcommand\beps{\boldsymbol{\varepsilon}}
\newcommand\bsig{\boldsymbol{\sigma}}
\newcommand\btau{\boldsymbol{\tau}}
\newcommand\bnabla{\boldsymbol{\nabla}}
\newcommand\bchi{\boldsymbol{\chi}}
\newcommand\ubu{\underline{\boldsymbol{u}}}
\newcommand\ubv{\underline{\boldsymbol{v}}}
\newcommand\mt{\mathtt{t}}
\newcommand{\sep}{\mathrel{;}}
\newcommand{\tr}{\operatorname{tr}}
\renewcommand{\div}{\operatorname{div}}
\newcommand{\bdiv}{\operatorname{\mathbf{div}}}
\newcommand{\mmean}[1]{\left\{\kern-1.ex\left\{ #1 \right\}\kern-1.ex\right\}}
\DeclarePairedDelimiterX\norm[1]\lVert\rVert{
   \ifblank{#1}{\:\cdot\:}{#1}
}
\DeclarePairedDelimiterX\abs[1]\lvert\rvert{
   \ifblank{#1}{\:\cdot\:}{#1}
}
\DeclarePairedDelimiter{\inner}{(}{)}
\DeclarePairedDelimiter{\set}{\{}{\}}
\DeclarePairedDelimiter{\dual}{\langle}{\rangle}
\DeclarePairedDelimiter{\stack}{[}{]}
\titleformat{\section}[block]{\filcenter\bfseries}{\thesection.}{1em}{}
\titleformat{\subsection}[block]{\bfseries}{\thesubsection.}{1em}{}
\newtheoremstyle{paper}{}{}{\itshape}{}{\scshape}{.}{.5em}{}
\theoremstyle{paper}
\newtheorem{theorem}{Theorem}
\newtheorem{proposition}{Proposition}
\newtheorem{lemma}{Lemma}
\newtheorem{remark}{Remark}
\begin{document}
\title{\huge Hybridizable Discontinuous Galerkin Methods for Thermo-Poroelastic Systems}
\author{Salim Meddahi
	\thanks{This research was  supported by Ministerio de Ciencia e Innovación, Spain.}}

\date{}


\maketitle

\begin{abstract}
\noindent
We propose a high-order hybridizable discontinuous Galerkin (HDG) formulation for the fully dynamic, linear thermo-poroelasticity problem. The governing equations are formulated as a first-order hyperbolic system incorporating solid and fluid velocities, heat flux, effective stress, pore pressure, and temperature as state variables. We establish well-posedness of the continuous problem using semigroup theory and develop an energy-consistent HDG discretization. The method exploits computational advantages of HDG—including locality and static condensation—while maintaining energy conservation for the coupled system. We establish an $hp$-convergence analysis and support it with comprehensive numerical experiments, confirming the theoretical rates and showcasing the method's effectiveness for thermo-poroelastic wave propagation in heterogeneous media.
\end{abstract}
\bigskip

\noindent
\textbf{Mathematics Subject Classification.}  65N30, 65N12, 65N15, 74F10, 74D05, 76S05
\bigskip

\noindent
\textbf{Keywords.}
Thermo-poroelasticity, wave propagation, hybridizable discontinuous Galerkin,  high-order methods, $hp$ error estimates


\section{Introduction}\label{s:introduction}

The coupled interaction of mechanical deformation, pore-fluid flow, and heat transport governs numerous engineering and geophysical applications, including geothermal energy extraction, nuclear waste storage, hydraulic fracturing, and enhanced oil recovery \cite{ayub,fanfeng, putra}. Accurate prediction of these phenomena presents significant challenges because the governing thermo-poroelastic system involves multiple time scales and sharp solution gradients arising from material heterogeneity or localized sources.

Classical computational methods such as finite difference \cite{gaspar2003}, finite element \cite{santos1986II}, boundary element \cite{chen1995}, finite volume \cite{Lemoine2013}, and mixed methods \cite{Lee2023} typically require extremely fine meshes to resolve these features adequately. This computational burden has motivated the development of high-order methods, including spectral techniques \cite{morency} and high-order space-time Galerkin formulations \cite{bause2024}. In particular, advanced polytopal discontinuous Galerkin schemes developed in \cite{antonietti2023discontinuous, antoniettiIMA, bonetti2024, bonetti2023numerical} provide local conservation, geometric flexibility, and natural $hp$-adaptivity. However, their large computational stencils may result in a high cost per degree of freedom.

Hybridizable discontinuous Galerkin (HDG) methods \cite{Cockburn2009} address this computational challenge by statically condensing element unknowns onto faces, thereby producing smaller global systems while preserving the accuracy of standard DG methods. HDG has demonstrated remarkable effectiveness for standalone and coupled elastodynamic problems \cite{nguyen2011, duSayas2020, hungria2019, meddahi2023hp, fu2019, meddahi2025, meddahi2025b, meddahi2025c}. The present work extends the HDG framework proposed in \cite{meddahi2025b} for poroelasticity to the more complex case of thermo-poroelasticity. To our knowledge, no HDG strategy for thermo-poroelasticity has been developed previously.

Our point of departure is the linear, fully dynamic thermo-poroelasticity model considered in \cite{carcione2019,  bonetti2023numerical}, which couples solid mechanics, Darcy flow, and finite-speed heat transport. In this model, heat conduction follows the Maxwell-Cattaneo law, eliminating the non-physical infinite propagation velocity inherent in classical Fourier diffusion \cite{Biot1956b}. In contrast to \cite{carcione2019, bonetti2023numerical}, where the problem is formulated as a second-order PDE in time, we formulate the governing equations as a first-order hyperbolic system. This reformulation facilitates the application of semigroup theory for establishing well-posedness of the continuous problem. It is worth noting that \cite{SANTOS2021124907} establishes well-posedness through energy estimates and a classical Galerkin procedure, in contrast to our semigroup-based analysis.

Following \cite{meddahi2025}, we develop an energy-consistent HDG discretization on general simplicial meshes and present an $hp$-convergence analysis. Comprehensive numerical experiments validate the theoretical convergence rates and demonstrate the method's effectiveness for wave propagation phenomena in heterogeneous media, including comparisons with purely poroelastic models that highlight the significance of thermal coupling effects.

The remainder of the paper is organized as follows. Section~\ref{sec:model} presents the governing thermo-poroelastic equations. Section~\ref{sec:wellposedness} recasts the problem as an abstract first-order multi-field evolution equation and establishes existence, uniqueness, and continuous energy bounds by verifying the Lumer-Phillips conditions for the associated operator. Section~\ref{sec:semi-discrete} details the HDG discretization and establishes well-posedness for the resulting semi-discrete problem. Section~\ref{sec:convergence} derives a priori error bounds demonstrating optimal $hp$-convergence rates. Section~\ref{sec:numresults} presents numerical experiments that validate the theoretical predictions. Concluding remarks and perspectives for future research appear in Section~\ref{sec:conclusions}.

Throughout the rest of this paper, we shall use the letter $C$  to denote generic positive constants independent of the mesh size $h$ and the polynomial degree $k$. These constants may represent different values at different occurrences. Moreover, given any positive expressions $X$ and $Y$ depending on $h$ and $k$, the notation $X \,\lesssim\, Y$  means that $X \,\le\, C\, Y$.

\section{The Dynamic Linear Thermo-Poroelastic Model}\label{sec:model}

This section presents a dynamic linear thermo-poroelastic model describing the coupled mechanical, thermal, and hydraulic behavior of a fluid-saturated porous medium subjected to time-varying loads and temperature changes, cf. \cite{carcione2019, bonetti2023numerical}. 

Let $\Omega\subset\bbR^{d}$, $d\in\{2,3\}$, be a bounded Lipschitz domain with boundary $\Gamma$ and outward unit normal $\bn$. Let $T>0$ denote a fixed final time. Throughout, an overdot denotes the partial derivative with respect to time, i.e., $\dot v:=\partial_t v$. The model's behavior is characterized by the primary variables
\[
	(\bu,\bq,\br,\bsig,p,\theta):
	\Omega\times(0,T]\longrightarrow
	\bbR^{d}\times\bbR^{d}\times\bbR^{d}\times\bbS\times\bbR\times\bbR,
\]
where $\bu$ represents the velocity of the solid matrix, $\bq$ the Darcy  velocity of the fluid phase, $\bsig$ the effective Cauchy stress tensor, $p$ the pore pressure of the fluid, $\theta$ the absolute temperature, and $\br$ the heat flux. 

With these variables, the linear first-order thermo-poroelastic system reads:
\begin{subequations}\label{eq:TP}
	\begin{align}
		\rho\,\dot{\bu} + \rho_f\,\dot{\bq}
		- \bdiv \bigl(\bsig- (\alpha p + \beta\theta) I_d\bigr)
		 & = \bF_s,
		 &              & \text{(solid momentum)}\label{eq:TP-a}
		\\
		\rho_f\,\dot{\bu} + \rho_w\,\dot{\bq}
		+ \frac{\eta}{\kappa}\,\bq + \nabla p
		 & = \bF_f,
		 &              & \text{(fluid momentum)}\label{eq:TP-b}
		\\
		c_0\,\dot p - b_0\,\dot\theta + \alpha\,\div\bu + \div\bq
		 & = 0,
		 &              & \text{(mass balance)}\label{eq:TP-c}
		\\
		a_0\,\dot\theta - b_0\,\dot p + \beta\,\div\bu + \div\br
		 & = g,
		 &              & \text{(energy balance)}\label{eq:TP-d}
		\\
		\dot{\bsig} - \cC\,\beps(\bu)
		 & = \mathbf 0,
		 &              & \text{(elastic law)}\label{eq:TP-e}
		\\
		\frac{\tau}{\chi}\,\dot{\br} + \frac1\chi\,\br + \nabla\theta
		 & = \mathbf 0,
		 &              & \text{(Cattaneo law)}\label{eq:TP-f}
	\end{align}
\end{subequations}
Here, $I_d \in \mathbb{R}^{d \times d}$ is the identity in the space of real matrices $\mathbb{R}^{d\times d}$ and $\beps(\bu):= \frac{1}{2}\left[\bnabla\bu+(\bnabla\bu)^{\mt}\right] : \Omega \to \mathbb{R}^{d\times d}_{\text{sym}}$ is the linearized strain tensor, where $\mathbb{R}^{d \times d}_{\text{sym}} \coloneq \{\btau \in \mathbb{R}^{d \times d} \mid \btau = \btau^{\mt}\}$. Additionally, $\bdiv\bsig:=\bigl(\sum_{j=1}^{d}\partial_{x_j}\sigma_{ij}\bigr)_{i=1}^{d}$ is the divergence operator, and $\bF_s$, $\bF_f$, and $g$ represent body forces, fluid supply, and a heat source/sink term, respectively. The material parameters appearing in the equations are detailed in Table~\ref{tab:parameters}.

\begin{table}[ht]
    \centering
    \caption{Material parameters of the thermo-poroelastic system}\label{tab:parameters}
    \begin{tblr}{
            colspec={X[2,l] X[4,l]},
            row{1} = {font=\bfseries},
            hline{1,Z} = {1pt},
            hline{2} = {0.5pt},
        }
        Parameter                                  & Description                                                  \\
        $\rho_s$, $\rho_f$                         & Solid and fluid densities                                    \\
        $\phi\in(0,1)$                             & Porosity                                                     \\
        $\rho = \phi\rho_f+(1-\phi)\rho_s$         & Composite density of the mixture                             \\
        $\nu>1$, $\rho_w = \frac{\nu}{\phi}\rho_f$ & Tortuosity and added-mass coefficient                        \\
        $\eta$, $\kappa$                           & Fluid dynamic viscosity and intrinsic permeability           \\
        $\alpha\in(0,1]$                           & Biot-Willis coupling coefficient                             \\
        $c_0$                                      & Specific storage                                             \\
        $a_0$, $b_0$                               & Effective heat capacity and thermo-dilatation                \\
        $\beta$                                    & Thermo-stress coupling coefficient                           \\
        $\chi$, $\tau>0$                           & Fourier conductivity and Maxwell-Cattaneo relaxation time    \\
        $\mathcal{C}$                              & Fourth-order, symmetric, positive definite elasticity tensor \\
    \end{tblr}
\end{table}

The system is supplemented with initial conditions:
\begin{equation}\label{IC1}
	\bu(0)=\bu^0,\quad
	\bq(0)=\bq^0,\quad
	\br(0)=\br^0,\quad
	\bsig(0)=\bsig^0,\quad
	p(0)=p^0,\quad
	\theta(0)=\theta^0
	\qquad\text{in }\Omega,
\end{equation}
and boundary conditions defined on complementary parts of the boundary $\partial\Omega$. Specifically, $\Gamma=\Gamma_D^S\cup\Gamma_N^S
	=\Gamma_D^F\cup\Gamma_N^F
	=\Gamma_D^H\cup\Gamma_N^H$, where $\Gamma^\star_D$ denotes Dirichlet boundaries and $\Gamma^\star_N$ denotes Neumann boundaries for $\star \in \set{S,F,H}$. We assume $\text{meas}(\Gamma_D^S)$, $\text{meas}(\Gamma_D^F)$, $\text{meas}(\Gamma_D^H)>0$ to avoid rigid body motions or ill-posedness:
\begin{align}\label{BC}
	\begin{aligned}
		\bu                                   & = \mathbf 0
		                                        &             & \qquad\text{on }\Gamma_D^S\times(0,T], &
		(\bsig-\alpha p I_d-\beta\theta I_d)\bn & = \mathbf 0
		                                        &             & \qquad\text{on }\Gamma_N^S\times(0,T],
		\\
		p                                       & = 0
		                                        &             & \qquad\text{on }\Gamma_D^F\times(0,T], &
		\bq\!\cdot\!\bn                       & = 0
		                                        &             & \qquad\text{on }\Gamma_N^F\times(0,T],
		\\
		\theta                                  & = 0
		                                        &             & \qquad\text{on }\Gamma_D^H\times(0,T], &
		\br\!\cdot\!\bn                       & = 0
		                                        &             & \qquad\text{on }\Gamma_N^H\times(0,T].
	\end{aligned}
\end{align}
The homogeneous boundary conditions presented here are for simplicity. The term $(\bsig-\alpha p I_d-\beta\theta I_d)\bn$ represents the total traction on the solid phase boundary.

\begin{remark}
	Several key properties of system~\eqref{eq:TP} are worth noting. First, it constitutes a linear, first-order-in-time system that admits a non-increasing energy functional under appropriate positivity assumptions on the material parameters (detailed below). This energy structure is fundamental for establishing well-posedness in the classical $L^2$-framework. Second, the system reduces to Biot's poroelasticity when thermal effects are neglected, and to linear thermoelasticity with finite-speed heat conduction when poroelastic effects are absent. Third, the first-order formulation is particularly well-suited for energy-conserving high-order time integrators and hybridizable discontinuous Galerkin discretizations.
\end{remark}

The elasticity tensor $\cC$ is assumed to be symmetric and positive definite, meaning there exist constants $c^+ > c^- > 0$ such that
\begin{equation}\label{CD}
	c^- \boldsymbol{\zeta}:\boldsymbol{\zeta} \leq \cC \boldsymbol{\zeta} :\boldsymbol{\zeta} \leq c^+ \boldsymbol{\zeta}:\boldsymbol{\zeta}
	\quad \text{for all } \boldsymbol{\zeta}\in \mathbb{R}^{d\times d}_{\text{sym}}.
\end{equation}

For subsequent analysis, it is convenient to define the following matrices based on the material parameters:
\begin{equation}
	R := \begin{pmatrix}
		\rho   & \rho_f & 0                 \\
		\rho_f & \rho_w & 0                 \\
		0      & 0      & \frac{\tau}{\chi}
	\end{pmatrix},
	\quad \text{and} \quad
	Q := \begin{pmatrix}
		c_0  & -b_0 \\
		-b_0 & a_0
	\end{pmatrix}.
\end{equation}
The matrix $R$ groups coefficients of the time derivatives of velocities  and $Q$ relates to storage and coupling in the mass and energy balance equations.

Under the physical assumptions that the tortuosity $\nu > 1$ (which implies $\rho\rho_w - \rho_f^2 >0$ )  and that the matrix $Q$ is positive definite (i.e. $a_0>0$, $c_0>0$, and   $a_0c_0 - b_0^2 > 0$)  (ensuring positive definiteness of $R$ and $Q$), we can establish the following coercivity and boundedness inequalities for $R$ and $Q$:  
\begin{align}
	\rho^- \boldsymbol{\eta}:\boldsymbol{\eta} & \leq \boldsymbol{\eta}R :\boldsymbol{\eta} \leq \rho^+ \boldsymbol{\eta}:\boldsymbol{\eta}
	                                           &                                                                                            & \text{for all } \boldsymbol{\eta}\in \mathbb{R}^{d\times 3}, \label{eq:R_bounds} \\
	c^- \boldsymbol{w} \cdot \boldsymbol{w}    & \leq Q\boldsymbol{w} \cdot \boldsymbol{w} \leq c^+ \boldsymbol{w}\cdot \boldsymbol{w}
	                                           &                                                                                            & \text{for all } \boldsymbol{w}\in \mathbb{R}^2, \label{eq:Q_bounds}
\end{align}
where $\rho^+ > \rho^- > 0$ and $c^+ > c^- > 0$ are constants depending on the material parameters.

\section{Well-posedness of the model problem}\label{sec:wellposedness}

In this section, we establish the well-posedness of the model problem \eqref{eq:TP} with boundary conditions \eqref{BC} and initial conditions \eqref{IC1}, employing the theory of strongly continuous semigroups \cite{pazy}.

\subsection{Functional setting}

To facilitate the analysis of the coupled system \eqref{eq:TP-a}-\eqref{eq:TP-f}, we introduce a compact two-block notation, grouping the velocity components and heat flux in one block and the stress tensor together with the scalar variables into the other. To this end, for vectors $\boldsymbol{a}, \boldsymbol{b}, \boldsymbol{c} \in \mathbb{R}^d$, we define $\stack*{\boldsymbol{a} \mid \boldsymbol{b}\mid \boldsymbol{c}} \in \mathbb{R}^{d \times 3}$ as the matrix formed by the column-wise concatenation of $\boldsymbol{a}$, $\boldsymbol{b}$, and $\boldsymbol{c}$.  Similarly, for scalars $a$ and $b$, $\stack*{a \sep b}\in \mathbb{R}^2$ denotes the column vector with components $a$ and $b$.

To treat the velocity components and the heat flow in a unified manner, we define $\underline{\bu} \coloneq \stack*{\bu \mid \bq \mid \br }: \Omega\times [0,T] \to \mathbb{R}^{d \times 3}$. Using this notation, we can rewrite equations \eqref{eq:TP-a}, \eqref{eq:TP-b}, and \eqref{eq:TP-f} in the compact form:
\begin{equation}\label{compact1}
 \dot{\underline{\bu}}R +
 \stack*{ - \bdiv (\bsig - (\alpha p + \beta \theta) I_d) \mid \nabla p \mid \nabla \theta } +  \mathfrak I \underline{\bu} 
 = \underline{\bF},
\end{equation}
where $\underline{\bF} \coloneq \stack*{ \bF_s \mid \bF_f \mid \mathbf{0} }$ and $\mathfrak I$ is the linear operator given by $\mathfrak I \underline{\bu} \coloneq \stack*{ \mathbf{0} \mid \tfrac{\eta}{\kappa}\bq \mid \tfrac{1}{\chi}\br }$.  Analogously, equations \eqref{eq:TP-c}, \eqref{eq:TP-d}, and \eqref{eq:TP-e} can be expressed as:
\begin{subequations}\label{eq:TPvec}
 \begin{align}
 	Q \stack*{\dot{p} \sep \dot{\theta}}  + \stack*{ \alpha \div \bu + \div \bq
 		               \sep
 		               \beta \div \bu + \div \br }
 	                                & =  \stack*{0 \sep g},
 	\\
 	\dot{\bsig} - \cC\,\beps(\bu) & = \mathbf 0.
 \end{align}
 \end{subequations}

Finally, we introduce the coupled state variable $\boldsymbol{\Theta} := (\bsig, \stack*{p \sep \theta})$ and let $\boldsymbol{\Phi} := (\btau, \stack*{z \sep \varphi})$ be the associated test function. This grouping is motivated by the structure of the divergence term in the momentum equation, $\bdiv\bigl(\bsig - (\alpha p + \beta\theta) I_d\bigr)$, which intrinsically couples these variables and influences the regularity requirements imposed by the function space $\cX_2$ defined later. 

To establish a suitable framework for formulating the first-order system in time \eqref{compact1}-\eqref{eq:TPvec}, we begin by defining the pivot Hilbert spaces $\mathcal{H}_1\coloneq L^2(\Omega, \mathbb R^{d\times 3})$ and $\mathcal{H}_2 \coloneq  L^2(\Omega,\mathbb{R}^{d\times d}_{\text{sym}}) \times   [L^2(\Omega)]^2$. We endow $\mathcal{H}_1$ with the inner product $\inner{ \underline{\bu}, \underline{\bv} }_{\mathcal{H}_1} \coloneq  \inner{\underline{\bu} R, \underline{\bv}}_{\Omega}$. According to \eqref{eq:R_bounds}, the corresponding norm $\norm{\underline{\bv}}^2_{\mathcal{H}_1} \coloneq  \inner{ \underline{\bv}, \underline{\bv} }_{\mathcal{H}_1}$  satisfies
\begin{equation}\label{bound:rho}
	\rho^-  \norm{\underline{\bv}}^2_{0,\Omega}  \leq 	\norm{\underline{\bv}}^2_{\mathcal{H}_1} \leq \rho^+  \norm{\underline{\bv}}^2_{0,\Omega}  \quad \forall \underline{\bv}  =  \left[\bv \mid \bw \mid \bs \right] \in \mathcal{H}_1.
\end{equation}

Additionally, we define $\mathcal{A} \coloneq  \mathcal{C}^{-1}$, and equip
$\mathcal{H}_2$ with the inner product
\[
    \inner{ \boldsymbol{\Theta}, \boldsymbol{\Phi} }_{\mathcal{H}_2} \coloneq   \inner{\mathcal{A}\bsig, \btau}_\Omega  + \inner*{Q \stack*{p \sep \theta}, \stack*{z \sep \varphi} }_\Omega,
\]
where $\boldsymbol{\Theta} = (\bsig, \stack*{p \sep \theta})$ and $\boldsymbol{\Phi}=(\btau,\stack*{z \sep \varphi})$ are elements of $\mathcal{H}_2$. The associated norm is $\norm{\boldsymbol{\Phi} }^2_{\mathcal{H}_2}  \coloneq   \inner{\mathcal{A}\btau, \btau}_\Omega  + \inner{Q \stack*{z \sep \varphi}, \stack*{z \sep \varphi} }_\Omega$. By \eqref{CD} and \eqref{eq:Q_bounds}, there exist  constants $a^+ > a^- >0$ such that
\begin{equation}\label{normH}
	a^- \left( \norm{\btau}^2_{0, \Omega}  + \norm{z}^2_{0, \Omega} + \norm{\varphi}^2_{0, \Omega}  \right) \leq \norm{\boldsymbol{\Phi} }^2_{\mathcal{H}_2} \leq a^+ \left( \norm{\btau}^2_{0, \Omega}  + \norm{z}^2_{0, \Omega} + \norm{\varphi}^2_{0, \Omega}  \right)   \quad \forall  \boldsymbol{\Phi}    \in \mathcal{H}_2.
\end{equation}

To incorporate the essential boundary conditions specified in \eqref{BC} within the energy space associated with the fluid velocity component and the heat flow, we introduce the closed subspaces
\[
	H_\star(\div, \Omega) := \set*{ \bv\in H(\div, \Omega); \quad \dual{\bv\cdot \bn,w}_{\Gamma}= 0 	\quad \forall w\in H^1_\star(\Omega) },
	\quad \star \in \set{F,H}
\]
consisting of vector fields in $H(\div, \Omega):= \set{\bv \in L^2(\Omega,\bbR^d);\ \div \bv \in L^2(\Omega)}$ with a free normal component on $\Gamma^\star_N$, $\star \in \set{F,H}$. Here,  $H^1_\star(\Omega):=\set{w\in H^1(\Omega);\ w|_{\Gamma^\star_D} = 0}$, $\star \in \set{F,H}$, and $\dual{\cdot, \cdot}_\Gamma$ represent the duality pairing between $H^{\sfrac12}(\Gamma)$ and $H^{-\sfrac12}(\Gamma)$.

Similarly, we let $H^1_S(\Omega,\bbR^d):=\set{\bv\in H^1(\Omega,\bbR^d);\ \bv|_{\Gamma^S_D} = \mathbf 0}$ and define
\[
	H_S(\bdiv, \Omega, \mathbb{R}^{d\times d}_{\text{sym}}) := \set{ \btau\in H(\bdiv, \Omega, \mathbb{R}^{d\times d}_{\text{sym}}); \quad
	\left\langle \btau\bn,\bu \right\rangle_{\Gamma}= 0
	\quad \forall\bu\in H^1_S(\Omega,\bbR^d) },
\]
as the closed subspace of 
$ 
H(\bdiv, \Omega, \mathbb{R}^{d\times d}_{\text{sym}}) := \set{\btau \in L^2(\Omega, \mathbb{R}^{d\times d}_{\text{sym}}); \ \bdiv \btau \in L^2(\Omega, \bbR^d) }
$ 
satisfying a stress-free boundary condition on $\Gamma^S_N$.

We define the unbounded operator $\mathfrak{L} :\ \cX_1\subset \mathcal{H}_1 \to \mathcal{H}_2$ with domain $\cX_1 \coloneq H_S^1(\Omega,\bbR^d) \times H_F(\div,\Omega) \times H_H(\div,\Omega)$ by
\[
	\mathfrak{L} \underline{\bu}  \coloneq  \left( \cC \beps(\bu), -Q^{-1} \stack*{ \alpha \div \bu + \div \bq
		\sep
		\beta \div \bu + \div \br}
	\right).
\]

\begin{lemma}\label{lem:L}
The operator $\mathfrak{L}:\ \mathcal{X}_1 \subset \mathcal{H}_1  \to \mathcal{H}_2$ is densely defined and closed.
\end{lemma}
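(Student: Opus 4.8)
The plan is to establish the two assertions separately, using throughout that, by \eqref{bound:rho} and \eqref{normH}, the weighted inner products on $\cH_1$ and $\cH_2$ induce norms equivalent to the plain $L^2$-norms; this lets me reduce every convergence statement to ordinary $L^2$-convergence of the individual fields. \emph{Density} then follows by showing $\cX_1$ is dense in $L^2(\Omega,\mathbb R^{d\times 3})$: each of the three factors of $\cX_1$ contains $[C_c^\infty(\Omega)]^d$, since a compactly supported smooth field vanishes in a neighbourhood of $\Gamma$ and hence trivially satisfies the homogeneous essential condition built into $H^1_S(\Omega,\bbR^d)$, $H_F(\div,\Omega)$, and $H_H(\div,\Omega)$. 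As $[C_c^\infty(\Omega)]^{d\times 3}$ is dense in $L^2(\Omega,\mathbb R^{d\times 3})$, so is $\cX_1$.

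For \emph{closedness} I would take a sequence $\underline{\bu}_n=\stack*{\bu_n\mid\bq_n\mid\br_n}\in\cX_1$ with $\underline{\bu}_n\to\underline{\bu}$ in $\cH_1$ and $\mathfrak{L}\underline{\bu}_n\to\boldsymbol\Psi=(\bsig^*,\stack*{p^*\sep\theta^*})$ in $\cH_2$, aiming to prove $\underline{\bu}\in\cX_1$ with $\mathfrak{L}\underline{\bu}=\boldsymbol\Psi$. By the norm equivalences this amounts to $\bu_n\to\bu$, $\bq_n\to\bq$, $\br_n\to\br$ in $L^2$, together with $\cC\beps(\bu_n)\to\bsig^*$ and $-Q^{-1}\stack*{\alpha\div\bu_n+\div\bq_n\sep\beta\div\bu_n+\div\br_n}\to\stack*{p^*\sep\theta^*}$ in $L^2$. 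Since $\cC$ and $Q$ are bounded and boundedly invertible, I obtain $\beps(\bu_n)\to\cA\bsig^*$ and the $L^2$-convergence of the two scalar combinations $\alpha\div\bu_n+\div\bq_n$ and $\beta\div\bu_n+\div\br_n$. The step that decouples the three divergences is the pointwise identity $\div\bu_n=\tr\beps(\bu_n)$: it forces $\div\bu_n\to\tr(\cA\bsig^*)$ in $L^2$, and subtracting the appropriate multiples isolates the $L^2$-convergence of $\div\bq_n$ and of $\div\br_n$ separately. Because the first-order operators $\beps$ and $\div$ are continuous from $L^2$ into $H^{-1}$, the distributional limits are uniquely identified, giving $\beps(\bu)=\cA\bsig^*$, $\div\bq=\lim\div\bq_n$, and $\div\br=\lim\div\br_n$ in $L^2$.

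It remains to place the limit in $\cX_1$. For the solid velocity I would invoke the second Korn inequality, valid since $\text{meas}(\Gamma_D^S)>0$, to upgrade $\bu\in L^2$ with $\beps(\bu)\in L^2$ to $\bu\in H^1(\Omega,\bbR^d)$; in fact Korn shows $(\bu_n)$ is Cauchy in $H^1$, and continuity of the trace on $\Gamma_D^S$ passes the condition $\bu_n|_{\Gamma_D^S}=\mathbf 0$ to the limit, so $\bu\in H^1_S(\Omega,\bbR^d)$. For the fluxes, the joint $L^2$-convergence of $(\bq_n,\div\bq_n)$ and $(\br_n,\div\br_n)$ is precisely convergence in $H(\div,\Omega)$; since $H_F(\div,\Omega)$ and $H_H(\div,\Omega)$ are closed subspaces — being kernels of the bounded normal-trace functionals into $H^{-1/2}(\Gamma)$ — the limits satisfy $\bq\in H_F(\div,\Omega)$ and $\br\in H_H(\div,\Omega)$. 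Thus $\underline{\bu}\in\cX_1$, and reassembling the identified limits yields $\mathfrak{L}\underline{\bu}=\boldsymbol\Psi$.

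I expect the solid-velocity component to be the only real obstacle. The operator exposes $\bu$ only through $\beps(\bu_n)$ and through the two scalar combinations in which $\div\bu_n$ is entangled with $\div\bq_n$ and $\div\br_n$, so recovering genuine $H^1$-control (rather than mere strain control) rests on Korn's inequality, and the divergences must first be disentangled via $\div\bu_n=\tr\beps(\bu_n)$ before the $H(\div,\Omega)$-closedness of the fluxes can be used. The remaining arguments are standard consequences of the closedness of $L^2$ differential operators and of the continuity of the trace maps.
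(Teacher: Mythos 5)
Your proof is correct and follows essentially the same route as the paper's: Korn's inequality upgrades the strain convergence to $H^1$-convergence of the solid velocity, and the closedness of the $H(\div)$ subspaces handles the fluxes once the divergence combinations are decoupled. The only (harmless) variation is that you isolate $\div\bq_n$ and $\div\br_n$ via the pointwise identity $\div\bu_n=\tr\beps(\bu_n)$ applied to the converging strains, rather than by first passing to the $H^1$-limit of $\bu_n$ as the paper implicitly does; both yield the same conclusion.
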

\begin{proof}
The density of $\mathcal{X}_1$ in $\mathcal{H}_1$ follows from the well-known density of $H^1(\Omega,\bbR^d) \subset H(\div, \Omega)$ in $L^2(\Omega,\bbR^d)$. To show that the graph of $\mathfrak{L}$ is closed, assume that $\set{\underline{\bu}_n}_n \subset \mathcal{X}_1$ is such that  $\underline{\bu}_n \to \underline{\bu}$ in $\mathcal{H}_1$ and $\mathfrak{L}\underline{\bu}_n \to \underline{\bz}$ in $\mathcal{H}_2$. It follows that $\set{\beps(\bu_{n})}_n$ is a Cauchy sequence in $L^2(\Omega,\bbS)$, and Korn's inequality guarantees that $\bu_{n} \to \bu$ in $H_S^1(\Omega,\bbR^d)$. We also have that $\set{\alpha\div\bu_{n} + \div\bq_{n}}_n$ and $\set{\beta\div\bu_{n} + \div\br_{n}}_n$ are  Cauchy sequences in $L^2(\Omega)$. Consequently, $\bq_{n} \to \bq$ in $H_F(\div, \Omega)$ and $\br_{n} \to \br$ in $H_H(\div, \Omega)$. Uniqueness of the limit in $\mathcal{H}_1$ establishes that $\underline{\bz} = \mathfrak{L}\underline{\bu}$, completing the proof.
\end{proof}

A direct calculation shows that the operator $\mathfrak{L}^H :\ \mathcal{X}_2\subset \cH_2 \to \mathcal{H}_1$ given by 
\[
	\mathfrak{L}^H \boldsymbol{\Theta} \coloneq \stack*{-\bdiv(\bsig - (\alpha p + \beta \theta) I_d ) \mid \nabla p \mid \nabla \theta } R^{-1},
\]
with domain 
\[
		\cX_2 \coloneq  \Big\{ \boldsymbol{\Phi}= (\btau, \stack{z \sep \varphi}) \in L^2(\Omega, \bbS)\times [H^1_F(\Omega)\times H^1_H(\Omega)];\
		\bdiv\left(\btau - (\alpha z + \beta  \varphi) I_d \right) \in L^2(\Omega,\bbR^d) \Big\},
\]
is the adjoint of $\mathfrak{L}$. It is characterized  by
\[
	\inner{\mathfrak{L} \underline{\bv} , \boldsymbol{\Phi}}_{\cH_2} = \inner{\underline{\bv}, \mathfrak{L}^H \boldsymbol{\Phi}}_{\cH_1} = \quad \forall \underline{\bv} \in \cX_1,  \ \boldsymbol{\Phi}  \in \cX_2. 
\]
It follows from Lemma \ref{lem:L} that $\mathfrak{L}^H :\ \mathcal{X}_2\subset \cH_2 \to \mathcal{H}_1$ is also a closed operator. Hence, the linear spaces $\mathcal{X}_1$ and $\mathcal{X}_2$ are Hilbert spaces when equipped with the inner products
\begin{align}
		 \inner{\underline{\bu} , \underline{\bv} }_{\cX_1} &\coloneq  \inner{\underline{\bu} , \underline{\bv} }_{\cH_1} +  \inner*{\mathfrak{L}\underline{\bu} , \mathfrak{L}\underline{\bv} }_{\cH_2} \label{eq:inner1}
		 \\
		 \inner{\boldsymbol{\Theta}, \boldsymbol{\Phi} }_{\cX_2} &\coloneq  \inner{\boldsymbol{\Theta}, \boldsymbol{\Phi} }_{\cH_2} + \inner{\mathfrak{L}^H\boldsymbol{\Theta}, \mathfrak{L}^H\boldsymbol{\Phi} }_{\cH_1}, \label{eq:inner2}
\end{align}
respectively.

\subsection[Application of C0-semigroup theory]{Application of $C_0$-semigroup theory}

The coupled system of partial differential equations \eqref{eq:TP} with the initial conditions \eqref{IC1} and boundary conditions \eqref{BC} can be reformulated as a first-order evolution problem in a Hilbert space. This allows us to utilize the  theory of strongly continuous semigroups for existence and uniqueness analysis.

The system \eqref{compact1}--\eqref{eq:TPvec} and the initial conditions \eqref{IC1} can be written compactly as
\begin{equation}\label{IVP}
   \frac{\mathrm{d}}{\mathrm{d} t}
   \begin{pmatrix}\underline \bu \\ \boldsymbol\Theta\end{pmatrix}
   +\mathbb A
   \begin{pmatrix}\underline \bu \\ \boldsymbol\Theta\end{pmatrix}
   \;=\;
   \begin{pmatrix}\underline{\bF}R^{-1} \\ \boldsymbol G\end{pmatrix}, 
   \quad
   \begin{pmatrix}\underline \bu(0) \\ \boldsymbol\Theta(0)\end{pmatrix}
   =\begin{pmatrix}\underline{\bu}^0 \\ \boldsymbol{\Theta}^0\end{pmatrix}
\end{equation}
where $\boldsymbol G \coloneq (\mathbf 0, Q^{-1}\stack*{0 \sep g})$, $\underline{\bu}^0 \coloneq  \stack*{\bu^0 \mid \bq^0 \mid \br^0 }$ and $\boldsymbol{\Theta}^0 \coloneq (\bsig^0, \stack*{p^0 \sep \theta^0})$. The infinitesimal generator $\mathbb{A}$ of the strongly continuous semigroup on $\cH_1 \times \cH_2$ associated with the evolution system \eqref{IVP} is given by
\[
	\mathbb{A} \begin{pmatrix} \underline{\bu} \\ \boldsymbol{\Theta} \end{pmatrix} \coloneq \begin{pmatrix} \mathbf 0 & \mathfrak{L}^H \\ -\mathfrak{L} & \mathbf 0 \end{pmatrix} \begin{pmatrix} \underline{\bu} \\ \boldsymbol{\Theta} \end{pmatrix} + \begin{pmatrix} \mathfrak I \underline{\bu}  R^{-1} \\ \mathbf 0 \end{pmatrix}.
\]

\begin{theorem}\label{thm:Hille-Yosida}
For all $\underline{\bF}  \in \mathcal{C}^1_{[0,T]}(L^2(\Omega, \mathbb{R}^{d\times 3}))$, $g \in \mathcal{C}^1_{[0,T]}(L^2(\Omega))$, $\underline{\bu}^0 \in \mathcal{X}_1$, and $\boldsymbol{\Theta}^0 \in \mathcal{X}_2$, there exist unique $\underline{\bu} \in \mathcal{C}^1_{[0,T]}(\mathcal{H}_1) \cap \mathcal{C}^0_{[0,T]}(\mathcal{X}_1)$ and $\boldsymbol{\Theta} \in \mathcal{C}^1_{[0,T]}(\mathcal{H}_2) \cap \mathcal{C}^0_{[0,T]}(\mathcal{X}_2)$ solutions to the initial boundary value problem \eqref{IVP}. Moreover, there exists a constant $C>0$ such that  
\begin{equation}\label{eq:stab}
\max_{[0,T]}\norm{\underline{\bu}(t)}_{\mathcal{H}_1} + \max_{[0,T]}\norm{\boldsymbol{\Theta}(t) }_{\mathcal{H}_2} \leq C \left( \max_{[0,T]}\norm{\underline{\bF}(t)}_{0,\Omega} + \max_{[0,T]}\norm{g(t)}_{0,\Omega}   \right) +   \norm{\underline{\bu}^0}_{\mathcal{H}_1} + \norm{\boldsymbol{\Theta}^0} _{\mathcal{H}_2}.
\end{equation}
\end{theorem}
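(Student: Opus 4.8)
The plan is to read \eqref{IVP} as the abstract Cauchy problem $\dot U=-\bbA U+\boldsymbol H$ with $U=(\ubu,\boldsymbol\Theta)$ and $\boldsymbol H=(\underline{\bF}R^{-1},\boldsymbol G)$, and to prove via the Lumer--Phillips theorem that $-\bbA$ generates a $C_0$-semigroup of contractions on $\cH_1\times\cH_2$, i.e.\ that $\bbA$ is maximal monotone. Once this is established, existence, uniqueness and the regularity $U\in\cC^1_{[0,T]}(\cH_1\times\cH_2)\cap\cC^0_{[0,T]}(D(\bbA))$ are standard consequences for classical solutions of inhomogeneous evolution equations, and the stability bound \eqref{eq:stab} follows from the variation-of-constants formula. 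Throughout I would use that $\mathfrak L^H$ is the Hilbert adjoint $\mathfrak L^\ast$ with respect to the inner products of $\cH_1,\cH_2$, so that the skew-coupling block of $\bbA$ is skew-adjoint.

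First I would record the structural facts. The domain $D(\bbA)=\cX_1\times\cX_2$ is dense, since $\cX_1$ is dense in $\cH_1$ by Lemma~\ref{lem:L} and $\cX_2=D(\mathfrak L^H)$ is dense in $\cH_2$ as the domain of the adjoint of a densely defined operator; the skew-adjoint coupling block is closed and adding the bounded multiplication $\ubu\mapsto\mathfrak I\ubu R^{-1}$ preserves closedness. For monotonicity, the adjoint relation $\inner{\mathfrak L\ubv,\boldsymbol\Phi}_{\cH_2}=\inner{\ubv,\mathfrak L^H\boldsymbol\Phi}_{\cH_1}$ makes the two skew-coupling contributions cancel, so that for $U=(\ubu,\boldsymbol\Theta)\in D(\bbA)$, using $\inner{\ubu,\ubv}_{\cH_1}=\inner{\ubu R,\ubv}_\Omega$,
\[
\inner{\bbA U,U}_{\cH_1\times\cH_2}
=\inner{\mathfrak I\ubu R^{-1},\ubu}_{\cH_1}
=\inner{\mathfrak I\ubu,\ubu}_\Omega
=\tfrac{\eta}{\kappa}\,\norm{\bq}_{0,\Omega}^2+\tfrac1\chi\,\norm{\br}_{0,\Omega}^2\ \ge\ 0 ,
\]
which is precisely the physical dissipation of the model and shows $-\bbA$ is dissipative with no spectral shift needed.

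The crux is maximality, i.e.\ surjectivity of $I+\bbA$. Given $(\underline{\bf},\boldsymbol\Psi)\in\cH_1\times\cH_2$, the second block of $(I+\bbA)U=(\underline{\bf},\boldsymbol\Psi)$ gives $\boldsymbol\Theta=\mathfrak L\ubu+\boldsymbol\Psi$; inserting this into the first block reduces the problem to finding $\ubu\in\cX_1$ with $\ubu+\mathfrak I\ubu R^{-1}+\mathfrak L^H\mathfrak L\ubu=\underline{\bf}-\mathfrak L^H\boldsymbol\Psi$. I would solve this by Lax--Milgram: the form
\[
a(\ubu,\ubv):=\inner{\ubu,\ubv}_{\cH_1}+\inner{\mathfrak I\ubu R^{-1},\ubv}_{\cH_1}+\inner{\mathfrak L\ubu,\mathfrak L\ubv}_{\cH_2}
\]
is bounded on $\cX_1$ and, since the $\mathfrak I$-term is nonnegative, coercive with $a(\ubu,\ubu)\ge\norm{\ubu}_{\cH_1}^2+\norm{\mathfrak L\ubu}_{\cH_2}^2=\norm{\ubu}_{\cX_1}^2$ by \eqref{eq:inner1}, while the functional $\ubv\mapsto\inner{\underline{\bf},\ubv}_{\cH_1}-\inner{\boldsymbol\Psi,\mathfrak L\ubv}_{\cH_2}$ is continuous on $\cX_1$ (writing the term with $\mathfrak L^H\boldsymbol\Psi$ weakly is exactly what avoids demanding $\boldsymbol\Psi\in D(\mathfrak L^H)$). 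The resulting $\ubu$ and $\boldsymbol\Theta=\mathfrak L\ubu+\boldsymbol\Psi$ satisfy $\inner{\boldsymbol\Theta,\mathfrak L\ubv}_{\cH_2}=\inner{\underline{\bf}-\ubu-\mathfrak I\ubu R^{-1},\ubv}_{\cH_1}$ for all $\ubv\in\cX_1=D(\mathfrak L)$, which is precisely the statement that $\boldsymbol\Theta\in D(\mathfrak L^H)=\cX_2$ with $\mathfrak L^H\boldsymbol\Theta=\underline{\bf}-\ubu-\mathfrak I\ubu R^{-1}$, so that $(I+\bbA)U=(\underline{\bf},\boldsymbol\Psi)$. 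I expect this final verification — that the variational solution genuinely lies in $D(\mathfrak L^H)$ rather than merely solving a weak form — to be the step requiring the most care.

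With $\bbA$ maximal monotone, Lumer--Phillips yields a contraction semigroup $\{T(t)\}_{t\ge0}$ generated by $-\bbA$. Since $(\ubu^0,\boldsymbol\Theta^0)\in D(\bbA)$ and, by the hypotheses $\underline{\bF}\in\cC^1_{[0,T]}(L^2)$, $g\in\cC^1_{[0,T]}(L^2)$ together with the boundedness of $R^{-1},Q^{-1}$, the datum $\boldsymbol H$ belongs to $\cC^1_{[0,T]}(\cH_1\times\cH_2)$, the standard classical-solution theory gives a unique $U\in\cC^1_{[0,T]}(\cH_1\times\cH_2)\cap\cC^0_{[0,T]}(D(\bbA))$, which is the claimed regularity for $\ubu$ and $\boldsymbol\Theta$ (the graph norm of $D(\bbA)$ being equivalent to the $\cX_1\times\cX_2$ norm by \eqref{eq:inner1}--\eqref{eq:inner2}). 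Finally, $U(t)=T(t)U^0+\int_0^t T(t-s)\boldsymbol H(s)\,\mathrm ds$ with $\norm{T(t)}\le1$ gives $\norm{U(t)}_{\cH}\le\norm{U^0}_{\cH}+T\max_{[0,T]}\norm{\boldsymbol H}_{\cH}$, and rewriting the $\cH$-norms through \eqref{bound:rho}, \eqref{normH} and the boundedness of $R^{-1},Q^{-1}$ produces \eqref{eq:stab}.
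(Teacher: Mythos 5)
Your proposal is correct and follows essentially the same route as the paper: monotonicity of $\mathbb A$ from the cancellation of the skew coupling terms, maximality by eliminating $\boldsymbol\Theta^*$ and applying Lax--Milgram on $\cX_1$, and then the Lumer--Phillips theorem, with the stability bound \eqref{eq:stab} read off from the contraction property. The only minor difference is the last step of maximality: where the paper solves a second, dual variational problem in $\cX_2$ to conclude $\boldsymbol\Theta^*\in\cX_2$, you read the identity $\inner{\boldsymbol\Theta^*,\mathfrak L\ubv}_{\cH_2}=\inner{\underline{\bF}-\ubu^*-\mathfrak I\ubu^* R^{-1},\ubv}_{\cH_1}$ for all $\ubv\in\cX_1$ directly as the defining property of membership in $D(\mathfrak L^H)=\cX_2$, which is an equally valid and arguably more economical way to close the argument.
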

\begin{proof}
By definition, we have that
\[
\inner*{\mathbb A
   \begin{pmatrix}\underline \bu\\\boldsymbol\Theta\end{pmatrix}, 
   \begin{pmatrix}\underline \bu\\\boldsymbol\Theta\end{pmatrix}}_{\cH_1 \times \cH_2} = \inner*{ \mathfrak I \underline{\bu}, \underline{\bu}
   }_{\Omega} \geq 0 \quad \forall \begin{pmatrix}\underline \bu\\\boldsymbol\Theta\end{pmatrix} \in \cX_1 \times \cX_2,
\]
which proves that $\mathbb A$ is monotone.

It remains to show that the operator $\mathbb I + \mathbb A: \mathcal{X}_1 \times \mathcal{X}_2 \to  \mathcal{H}_1 \times \mathcal{H}_2$ is surjective, where $\mathbb I$ represents the identity operator in $\mathcal{X}_1 \times \mathcal{X}_2$. Given an arbitrary  $\left( \underline{\bF} ,\boldsymbol G  \right)\in  \mathcal{H}_1 \times \mathcal{H}_2$, we seek $\left( \underline{\bu}^* ,\boldsymbol\Theta^*  \right)\in  \mathcal{X}_1 \times \mathcal{X}_2$ satisfying 
\[
\left( \mathbb I + \mathbb A \right) \begin{pmatrix}\underline \bu^*\\\boldsymbol\Theta^*\end{pmatrix} = \begin{pmatrix}\underline \bF\\\boldsymbol G\end{pmatrix}.
\]
Writing out the components,
\begin{equation}\label{system}
	   \underline \bu^*  + \mathfrak L^H \boldsymbol\Theta^* + \mathfrak I \underline{\bu}^* R^{-1}
   =\underline \bF,
   \quad
   \boldsymbol\Theta^* - \mathfrak L \underline \bu^*
   =\boldsymbol G,
\end{equation}
and substituting \(\boldsymbol\Theta^* = \boldsymbol G + \mathfrak L \underline \bu^*\) into the first equation, we obtain the variational problem: find \(\underline \bu^*\in\cX_1\) such that
\begin{equation}\label{eq:var2}
   \inner*{\underline \bu^*,\underline \bv}_{\cX_1} + \inner*{ \mathfrak I \underline{\bu}^* R^{-1},\underline \bv}_{\cH_1}
   =
   \inner*{\underline \bF,\underline \bv}_{\cH_1}
   -
   \inner*{\boldsymbol G, \mathfrak L\underline \bv}_{\cH_1}
   \quad
   \forall\,\underline v\in\cX_1.
\end{equation}
By the Lax--Milgram lemma, there exists a unique solution \(\underline \bu^* \in \cX_1 \) to this problem. 

To ensure that $\boldsymbol\Theta^* = \boldsymbol G + \mathfrak L \underline \bu^* \in \mathcal{H}_2$ actually lies in $\cX_2$,  we employ a dual procedure to the first step and substitute $\underline \bu^*=   - \mathfrak L^H \boldsymbol\Theta^* - \mathfrak I \underline{\bu}^* R^{-1}
   +\underline \bF$ into the second equation of \eqref{system} to deduce that   $\boldsymbol\Theta^*\in\cX_2$ is the unique solution of the variational problem
   \begin{equation}\label{eq:var3}
   \inner*{\boldsymbol\Theta^* ,\boldsymbol\Phi}_{\cX_2} = \inner*{\mathfrak I \underline{\bu}^* R^{-1}
   -\underline \bF, \mathfrak L^H  \boldsymbol\Phi}_{\cH_1} + \inner*{\boldsymbol G, \boldsymbol\Phi}_{\cH_2}\quad \forall  \boldsymbol\Phi \in \cX_2.
\end{equation}

We have shown that solving problems \eqref{eq:var2} and \eqref{eq:var3} successively provides an inverse image $(\underline{\boldsymbol{u}}^*, \boldsymbol{\Theta}^*) \in \mathcal{X}_1 \times \mathcal{X}_2$ of $(\underline{\boldsymbol{F}}, \boldsymbol{G}) \in \mathcal{H}_1 \times \mathcal{H}_2$ under $\mathbb{I} + \mathbb{A}$.

Since $\mathbb{A}$ is maximal monotone, the result follows from the Lumer-Phillips variant of the Hille-Yosida theorem (cf. \cite[Theorem 76.7]{ErnBook2021III}).
\end{proof}

We define the bounded bilinear form $B: \cX_1 \times \cX_2 \to \bbR$ as
\[
B( \underline{\bv} , \boldsymbol{\Phi}) \coloneq   \inner{\bdiv(\btau - (\alpha z + \beta  \varphi) I_d, \bv}_\Omega  -  \inner{ \nabla z,\bq}_\Omega -  \inner{ \nabla \varphi ,\br}_\Omega.
\]
In the following sections, we will develop a finite element discretization method based on the following weak form of \eqref{IVP}: Find $\underline{\bu}= \stack*{\bu \mid \bq \mid \br} \in \mathcal{C}_{[0,T]}^1(\cH_1) \cap \mathcal{C}^0_{[0,T]}(\cX_1)$ and  $\boldsymbol{\Theta} = (\bsig, \stack{p \sep \theta}) \in \mathcal{C}^1_{[0,T]}(\cH_2) \cap \mathcal{C}^0_{[0,T]}(\cX_2)$ satisfying 
\begin{align}\label{eq:weakform}
	\begin{split}
		\inner*{\dot{\underline{\bu}}  , \underline{\bv} }_{\cH_1} + \inner*{\mathfrak I \underline{\bu} ,\underline{\bv} }_\Omega  -B( \underline{\bv} , \boldsymbol{\Theta})
		&= \inner*{\underline{\bF}, \underline{\bv} }_\Omega 
		\\
	\inner{\dot{\boldsymbol{\Theta}}, \boldsymbol{\Phi} }_{\cH_2} 
	+  B( \underline{\bu} , \boldsymbol{\Phi})
	&= \inner*{g, \varphi}_\Omega,
	\end{split}
\end{align}
for all $\underline{\bv} = \stack*{\bv \mid \bw \mid \bs}\in \cX_1$ and all $\boldsymbol{\Phi} = (\btau, \stack{z \sep \varphi})\in \cX_2$, and subject to the initial conditions specified  in \eqref{IVP}.

\section{The semi-discrete problem}\label{sec:semi-discrete}

In this section, we present the HDG semi-discrete method for the poroelasticity problem \eqref{eq:weakform}. We establish the well-posedness of the resulting algebraic-differential equations and prove the method's consistency with the continuous formulation.

For the sake of simplicity, from now on we assume that $\Gamma^S_D = \Gamma^F_D = \Gamma^H_D = \Gamma$. As a result, the boundary conditions \eqref{BC} become 
\begin{equation}\label{newBC}
		\bu = \mathbf 0,  \quad  
	p = 0, \quad \text{and} 
	\quad \theta = 0 \quad \text{on $ \Gamma\times (0, T]$}.
\end{equation}   

\subsection{Broken Sobolev Spaces}

Let $\cT_h$ be a shape-regular partition of the computational domain closure $\bar{\Omega}$ into elements. These elements are tetrahedra or parallelepipeds when the spatial dimension $d=3$, and triangles or quadrilaterals when $d=2$. The partition $\cT_h$ is permitted to have hanging nodes. For each element $K \in \cT_h$, we denote its diameter by $h_K$. The mesh size parameter $h$ is defined as the maximum diameter among all elements, i.e., $h := \max_{K \in \cT_h} \{h_K\}$.

A closed subset $F \subset \overline{\Omega}$ is classified as an interior edge or face if it possesses a positive $(d-1)$-dimensional measure and can be expressed as the intersection of the closures of two distinct elements $K$ and $K'$, formally $F = \bar{K} \cap \bar{K}'$. Conversely, $F \subset \overline{\Omega}$ is a boundary edge or face if there exists an element $K \in \cT_h$ such that $F$ is an edge or face of $K$, and $F = \bar{K} \cap \partial\Omega$. We denote the set of all interior edges/faces by $\cF_h^0$ and the set of all boundary edges/faces by $\cF_h^\partial$. The union of these sets constitutes the set of all mesh faces, $\cF_h = \cF_h^0 \cup \cF_h^\partial$. For each face $F \in \cF_h$, $h_F$ represents its diameter. The mesh $\cT_h$ is assumed to be locally quasi-uniform with a constant $\gamma > 0$. This condition implies that for all $h$ and for every element $K \in \cT_h$, the following relationship holds:
\begin{equation}\label{reguT}
	h_F \leq h_K \leq \gamma h_F \quad \forall F \in \cF(K),
\end{equation}
where $\cF(K)$ denotes the set of all faces comprising the boundary of element $K \in \cT_h$. This local quasi-uniformity is a prerequisite for the proof of Lemma~\ref{maintool2}.

For any $s \geq 0$, the broken Sobolev space with respect to the partition $\cT_h$ of $\bar{\Omega}$ is defined as the space
\[
	H^s(\cT_h,\mathbb{R}^{m\times n}) :=
	\left\{\bv \in L^2(\Omega, \mathbb{R}^{m\times n}) \mid \bv|_K \in H^s(K, \mathbb{R}^{m\times n}) \quad \forall K \in \cT_h \right\}.
\]
Following standard conventions, we simplify notation where the codomain is $\mathbb{R}$, writing $H^s(\cT_h,\bbR) = H^s(\cT_h)$, and note that for $s=0$, $H^0(\cT_h,\mathbb{R}^{m\times n}) = L^2(\cT_h,\mathbb{R}^{m\times n})$. We introduce an inner product on $L^2(\cT_h, \mathbb{R}^{m\times n})$ as the sum of element-wise inner products:
\[
	\inner{\boldsymbol{\psi}, \boldsymbol{\varphi}}_{\cT_h} := \sum_{K\in \cT_h} \inner{\boldsymbol{\psi}, \boldsymbol{\varphi}}_{K} \quad \forall \boldsymbol{\psi}, \boldsymbol{\varphi} \in L^2(\cT_h, \mathbb{R}^{m\times n}).
\]
The corresponding norm is denoted by $\norm{\boldsymbol{\psi}}^2_{0,\cT_h} := \inner{\boldsymbol{\psi}, \boldsymbol{\psi}}_{\cT_h}$.
Furthermore, we define $\partial \cT_h := \set{\partial K \mid K \in \cT_h}$ as the collection of all element boundaries. The space $L^2(\partial \cT_h,\mathbb{R}^{m\times n})$ comprises $m \times n$ matrix-valued functions that are square-integrable on each $\partial K \in \partial \cT_h$. We define the following inner product and induced norm on this space:
\[
	\dual{\boldsymbol{\psi}, \boldsymbol{\varphi}}_{\partial \cT_h} := \sum_{K\in \cT_h} \dual{\boldsymbol{\psi},\boldsymbol{\varphi}}_{\partial K},
	\quad \text{and} \quad
	\norm{\boldsymbol{\varphi}}^2_{0, \partial \cT_h} := \dual{\boldsymbol{\varphi}, \boldsymbol{\varphi}}_{\partial \cT_h}
	\quad
	\forall \boldsymbol{\psi}, \boldsymbol{\varphi}\in L^2(\partial \cT_h,\mathbb{R}^{m\times n}),
\]
where the element boundary integral is given by $\dual{\boldsymbol{\psi}, \boldsymbol{\varphi}}_{\partial K} := \sum_{F\in \cF(K)} \int_F \boldsymbol{\psi}: \boldsymbol{\varphi}$.
Additionally, the space $L^2(\cF_h,\mathbb{R}^{m\times n})$ is equipped with the inner product:
\[
	(\boldsymbol{\psi}, \boldsymbol{\varphi})_{\cF_h} := \sum_{F\in \cF_h} \int_F\boldsymbol{\psi}: \boldsymbol{\varphi} \quad \forall \boldsymbol{\psi}, \boldsymbol{\varphi}\in L^2(\cF_h,\mathbb{R}^{m\times n}),
\]
with the associated norm denoted by $\norm*{\boldsymbol{\varphi}}^2_{0,\cF_h} := (\boldsymbol{\varphi},\boldsymbol{\varphi})_{\cF_h}$.

It is crucial to distinguish between functions defined on $L^2(\partial \cT_h,\mathbb{R}^{m\times n})$ versus $L^2(\cF_h,\mathbb{R}^{m\times n})$. Functions in $L^2(\partial \cT_h,\mathbb{R}^{m\times n})$ can possess two distinct values on each interior face $F$, corresponding to the trace from each adjacent element. In contrast, functions in $L^2(\cF_h,\mathbb{R}^{m\times n})$ are single-valued on each face $F$.

To formulate the HDG method, we need to introduce product spaces that accommodate both volume and trace variables. For $r > \frac{1}{2}$, we define:
\begin{align*}
	\begin{split}
			\mathcal{U} &\coloneq \left\{\ubv=[\bv \mid \bw \mid \bs] \mid \bv \in H^1(\cT_h,\mathbb{R}^d),\ \bw, \bs \in H(\div,\cT_h) \cap H^r(\cT_h,\mathbb{R}^d)\right\},
	\\
\hat{\mathcal{U}} &\coloneq \left\{\hat{\underline{\bv} } = [\hat{\bv} \mid \hat{\bw} \mid \hat{\bs}] \mid \hat{\bv} \in L^2(\cF^0_h,\mathbb{R}^{d}),\ \hat{\bw}, \hat{\bs} \in L^2(\cF_h,\mathbb{R}^{d}) \right\},
	\end{split}
\end{align*}
where $L^2(\cF^0_h,\mathbb{R}^{d}) \coloneq \left\{\boldsymbol{\phi}\in L^2(\cF_h,\mathbb{R}^{d}) \mid \boldsymbol{\phi}|_F = \mathbf{0},\ \forall F \in \cF_h^\partial\right\}$.
We endow the product space $\mathcal{U} \times \hat{\mathcal{U}}$ with the semi-norm defined for all $(\ubv, \hat{\ubv}) \in \mathcal{U} \times \hat{\mathcal{U}}$ by:
\begin{equation}\label{norm:sym}
	\abs*{(\ubv, \hat{\ubv})}^2_{\mathcal{U} \times \hat{\mathcal{U}}} = \norm{\beps( \bv)}^2_{0,\cT_h} + \norm{\div \bw}^2_{0,\cT_h} + \norm{\div \bs}^2_{0,\cT_h} + \norm{\frac{k+1}{h_\cF^{\frac{1}{2}}}(\ubv - \hat{\ubv})}^2_{0, \partial \cT_h},
\end{equation}
where $h_\cF \in \cP_0(\cF_h)$ is a piecewise constant function defined such that $h_\cF|_F := h_F$ for all $F \in \cF_h$.

\subsection{Piecewise-polynomial Spaces}

Hereafter, $\cP_\ell(D)$ denotes the space of polynomials. Specifically, if $D$ is a triangle or tetrahedron, $\cP_\ell(D)$ consists of polynomials of total degree at most $\ell \geq 0$. If $D$ is a quadrilateral or parallelepiped, $\cP_\ell(D)$ comprises polynomials of degree at most $\ell$ in each variable. The space of $\mathbb{R}^{m\times n}$-valued functions whose components belong to $\cP_\ell(D)$ is denoted by $\cP_\ell(D,\mathbb{R}^{m\times n})$. In particular, $\cP_\ell(D,\mathbb{R}^{d\times d}_{\text{sym}})$ refers to symmetric $d\times d$ matrices with components in $\cP_\ell(D)$. We introduce the space of piecewise-polynomial functions defined over the mesh partition $\cT_h$ as:
$$
	\cP_\ell(\cT_h) :=
	\set{ v\in L^2(\Omega): \ v|_K \in \cP_\ell(K),\ \forall K\in \cT_h }.
$$
Similarly, the space of piecewise-polynomial functions defined on the mesh skeleton $\cF_h$ is given by:
$$
	\cP_\ell(\cF_h) :=
	\set{ \hat v\in L^2(\cF_h): \ \hat v|_F \in \cP_\ell(F),\ \forall F\in \cF_h }.
$$
The subspace of $L^2(\cT_h, \mathbb{R}^{m\times n})$ with components in $\cP_\ell(\cT_h)$ is written $\cP_\ell(\cT_h, \mathbb{R}^{m\times n})$. Analogously, $\cP_\ell(\cF_h, \mathbb{R}^{m\times n})$ represents the subspace of $L^2(\cF_h, \mathbb{R}^{m\times n})$ with components in $\cP_\ell(\cF_h)$. Finally, we consider the space of piecewise-polynomial functions on the element boundaries:
$$
\cP_\ell(\partial \cT_h, \mathbb{R}^{m\times n}) := \set*{\phi\in L^2(\partial \cT_h, \mathbb{R}^{m\times n});\ \phi|_{\partial K}\in  \cP_\ell(\partial K,\mathbb{R}^{m\times n}),\ \forall K\in \cT_h},
$$
where $\cP_\ell(\partial K, \mathbb{R}^{m\times n})$ is defined as the product space $\prod_{F\in \cF(K)} \cP_\ell(F, \mathbb{R}^{m\times n})$.

We define $\bn\in \cP_0(\partial \cT_h, \mathbb{R}^d)$ as the piecewise constant unit normal vector field, where $\bn|_{\partial K}=\bn_K$ is the unit normal vector of $\partial K$ oriented outwards from $K$. It is important to note that if $F = K\cap K'$ is an interior face of $\cF_h$, then the normal vectors are oppositely oriented, i.e., $\bn_K = -\bn_{K'}$ on $F$. 

\subsection{The HDG Method}

For a given polynomial degree $k \geq 0$, we introduce the finite-dimensional subspaces $\mathcal{H}_{1,h}$ and $\mathcal{H}_{2,h}$ approximating $\mathcal{H}_1$ and $\mathcal{H}_2$, respectively:
$$
	\mathcal{H}_{1,h} \coloneq \cP_{k+1}(\cT_h,\bbR^{d\times 3})\quad \text{and} \quad \mathcal{H}_{2,h} \coloneq \cP_{k}(\cT_h,\mathbb{R}^{d\times d}_{\text{sym}}) \times [\cP_{k}(\cT_h)]^2.
$$
Additionally, for $k \geq 0$, we define the finite-dimensional subspace $\hat{\mathcal{H}}_{1,h}$ as:
$$
\hat{\mathcal{H}}_{1,h} \coloneq \set*{[\hat \bv \mid \hat \bw \mid \hat \bs] \mid \hat \bv \in \cP_{k+1}(\cF^0_h, \mathbb{R}^d),\ \hat \bw, \hat \bs \in \cP_{k+1}(\cF_h, \mathbb{R}^d)} \subset \hat{\mathcal{U}},
$$
where $\cP_{k+1}(\cF^0_h, \mathbb{R}^d) := \set{\boldsymbol{\phi} \in \cP_{k+1}(\cF_h, \mathbb{R}^d) \mid \boldsymbol{\phi}|_F = \mathbf 0,\ \forall F\in \cF_h^\partial}$.

We propose the following HDG spatial discretization method for problem \eqref{eq:weakform}: find $(\underline{\bu}_{h}, \underline{\hat{\bu}}_{h} ) \in \cC^1_{[0,T]}(\mathcal{H}_{1,h} \times \hat{\mathcal{H}}_{1,h})$ and $\boldsymbol{\Theta}_h = (\bsig_h, [p_h \sep \theta_h] ) \in \cC^1_{[0,T]}(\cH_{2,h})$ satisfying:
\begin{align}\label{sd}
	\begin{split}
		\inner*{\dot{\underline{\bu}}_{h}, \underline{\bv}}_{\cH_1} +
		\inner{\dot{\boldsymbol{\Theta}}_h, \boldsymbol{\Phi} }_{\cH_2} &+ \inner{\mathfrak I \underline{\bu}_{h},\underline{\bv} }_\Omega
		+ B_h(\boldsymbol{\Theta}_h, (\underline{\bv}, \underline{\hat{\bv}} ))
		- B_h(\boldsymbol{\Phi}, (\underline{\bu}_{h}, \underline{\hat{\bu}}_{h} ))
		\\
	  & \quad +\dual{ \tfrac{(k+1)^2}{h_\cF}(\underline{\bu}_{h} - \underline{\hat{\bu}}_{h} ),
	  \underline{\bv} - \underline{\hat{\bv}} }_{\partial \cT_h}
	  =  \inner{\underline{\bF}, \underline{\bv}}_\Omega + \inner{g, \varphi}_\Omega,
	\end{split}
\end{align}
for all $(\underline{\bv}, \underline{\hat{\bv}} ) \in \mathcal{H}_{1,h} \times \hat{\mathcal{H}}_{1,h}$ and $\boldsymbol{\Phi} = (\btau, [z \sep \varphi] ) \in \cH_{2,h}$. The bilinear form $B_h(\cdot,\cdot)$ is given by:
\begin{align*}
\begin{split}
	B_h( \boldsymbol{\Phi}, (\underline{\bv}, \underline{\hat{\bv}} )) &:=
	\inner*{\btau - (\alpha z + \beta \theta) I_d, \beps(\bv) }_{\cT_h} - \inner*{z, \div\bw}_{\cT_h} - \inner*{\varphi, \div\bs}_{\cT_h}
	\\
	& \qquad - \dual*{(\btau - (\alpha z + \beta \theta) I_d)\bn , \bv - \hat\bv }_{\partial \mathcal{T}_h} + \dual*{z\bn, \bw - \hat\bw }_{\partial \mathcal{T}_h} + \dual*{\varphi\bn, \bs - \hat\bs }_{\partial \mathcal{T}_h}.
\end{split}
\end{align*}
Problem \eqref{sd} is initialized with the following conditions:
\begin{align}\label{initial-R1-R2-h*c}
	\begin{split}
			\ubu_h(0) &= \Pi^{k+1}_\cT \ubu^0, \quad \hat{\ubu}_h(0) = \Pi^{k+1}_\cF (\ubu^0|_{\partial \mathcal{F}_h}),
	\\
	 \bsig_{h}(0)&= \Pi_\cT^k \bsig^0, \quad p_h(0) = \Pi_\cT^k p^0 \quad \text{and} \quad \theta_h(0) = \Pi_\cT^k \theta^0,
	\end{split}
\end{align}
where the $L^2$-projection operators $\Pi^{k}_\cT$ and $\Pi^{k+1}_\cF$ are defined in Appendix~\ref{appendix}.

The bilinear form $B_h(\cdot,\cdot)$ possesses the following boundedness property.
\begin{proposition}
	There exists a constant $C>0$ independent of $h$ and $k$ such that:
	\begin{equation}\label{Bhh}
		|B_h(\boldsymbol{\Phi}_h, (\underline{\bv}, \underline{\hat{\bv}} ))| \leq C \norm{\boldsymbol{\Phi}_h }_{\cH_2} \abs{ (\underline{\bv}, \underline{\hat{\bv}} )}_{\mathcal{U}\times \hat{\mathcal{U}}}\quad \text{for all}\ \boldsymbol{\Phi}_h \in \mathcal H_{2,h} \ \text{and} \ (\underline{\bv}, \underline{\hat{\bv}} ) \in \mathcal{U}\times \hat{\mathcal{U}}.
	\end{equation}
\end{proposition}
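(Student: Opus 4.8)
The plan is to bound each of the six terms defining $B_h$ separately, pairing every factor built from $\boldsymbol{\Phi}_h$ against the matching contribution in the seminorm \eqref{norm:sym}, and to absorb the remaining boundary factors through a discrete trace inequality. Throughout, the norm equivalence \eqref{normH} will let me replace $\norm{\btau}_{0,\cT_h}$, $\norm{z}_{0,\cT_h}$, $\norm{\varphi}_{0,\cT_h}$ by $\norm{\boldsymbol{\Phi}_h}_{\cH_2}$ up to a constant.

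For the three volume terms I would apply Cauchy--Schwarz elementwise. The term $\inner{\btau - (\alpha z + \beta\varphi)I_d, \beps(\bv)}_{\cT_h}$ is bounded by $\norm{\btau - (\alpha z + \beta\varphi)I_d}_{0,\cT_h}\,\norm{\beps(\bv)}_{0,\cT_h}$; the triangle inequality together with $\norm{I_d}=\sqrt d$ gives $\norm{\btau - (\alpha z + \beta\varphi)I_d}_{0,\cT_h} \lesssim \norm{\btau}_{0,\cT_h} + \norm{z}_{0,\cT_h} + \norm{\varphi}_{0,\cT_h} \lesssim \norm{\boldsymbol{\Phi}_h}_{\cH_2}$, while $\norm{\beps(\bv)}_{0,\cT_h}$ is the first seminorm contribution. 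The remaining volume terms $-\inner{z,\div\bw}_{\cT_h}$ and $-\inner{\varphi,\div\bs}_{\cT_h}$ are treated identically, pairing $\norm{z}_{0,\cT_h}$ and $\norm{\varphi}_{0,\cT_h}$ against $\norm{\div\bw}_{0,\cT_h}$ and $\norm{\div\bs}_{0,\cT_h}$.

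The boundary terms are the crux. For each I would insert the weight $h_\cF^{\sfrac12}/(k+1)$ and its inverse, writing for instance
\[
\dual*{z\bn,\,\bw - \hat\bw}_{\partial\cT_h}
= \dual*{\tfrac{h_\cF^{\sfrac12}}{k+1}\,z\bn,\; \tfrac{k+1}{h_\cF^{\sfrac12}}(\bw - \hat\bw)}_{\partial\cT_h},
\]
and apply Cauchy--Schwarz over $\partial\cT_h$. The second factor $\norm{\tfrac{k+1}{h_\cF^{\sfrac12}}(\bw - \hat\bw)}_{0,\partial\cT_h}$ is dominated by the last seminorm term. For the first factor I would exploit that $\boldsymbol{\Phi}_h \in \mathcal{H}_{2,h}$ is piecewise polynomial: a discrete trace inequality $\norm{w}_{0,\partial K} \lesssim \tfrac{k+1}{h_K^{\sfrac12}}\norm{w}_{0,K}$ for $w \in \cP_k(K)$, combined with $h_F \le h_K$ from \eqref{reguT}, yields $\norm{\tfrac{h_\cF^{\sfrac12}}{k+1}\,z}_{0,\partial\cT_h} \lesssim \norm{z}_{0,\cT_h} \lesssim \norm{\boldsymbol{\Phi}_h}_{\cH_2}$. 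The same argument applied to $\varphi$ (against $\bs-\hat\bs$) and to the degree-$k$ polynomial $\btau - (\alpha z + \beta\varphi)I_d$ (against $\bv-\hat\bv$, the scalar part handled as in the volume case) disposes of the two remaining boundary terms.

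Summing the six estimates and invoking \eqref{normH} once more gives \eqref{Bhh}. The main obstacle is precisely the boundary estimate: it is essential that $\boldsymbol{\Phi}_h$ lies in the discrete space $\mathcal{H}_{2,h}$, since only then does the discrete trace inequality apply and deliver the sharp $(k+1)$ and $h$ dependence needed to match the weighted seminorm, whereas the test pair $(\ubv,\hat\ubv)$, which belongs only to $\mathcal{U}\times\hat{\mathcal{U}}$, never requires such an inequality because its boundary jump is already measured directly in the seminorm.
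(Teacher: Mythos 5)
Your proof is correct and follows essentially the same route as the paper: Cauchy--Schwarz on the volume terms, insertion of the weight $h_\cF^{\sfrac12}/(k+1)$ and its inverse on the boundary terms, and then the discrete trace inequality \eqref{discTrace} applied to the piecewise-polynomial $\boldsymbol{\Phi}_h$ together with the norm equivalence \eqref{normH}. The only cosmetic difference is that the paper first records the intermediate bound \eqref{Bh} for any sufficiently regular $\boldsymbol{\Phi}$ before specializing to the discrete space, whereas you argue term by term; your closing observation that the trace inequality is needed only on the polynomial argument is exactly the point of the paper's two-step presentation.
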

\begin{proof}
	We recall that if $\boldsymbol{\varphi} \in H^s(\cT_h, \bbR^{m\times n})$, with $s>\sfrac{1}{2}$, then $\boldsymbol{\varphi}|_{\partial \cT_h}\in L^2(\partial \cT_h,\bbR^{m\times n})$ is well-defined due to the trace theorem. Applying the Cauchy-Schwarz inequality and \eqref{normH}, we deduce that:
	\begin{equation}\label{Bh}
		|B_h(\boldsymbol{\Phi}, (\underline{\bv}, \underline{\hat{\bv}} ))| \lesssim ( \norm{\boldsymbol{\Phi} }^2_{\cH_2} + \norm{\tfrac{h_\cF^{\sfrac{1}{2}}}{k+1} \btau}^2_{0,\partial \cT_h} + \norm{\tfrac{h_\cF^{\sfrac{1}{2}}}{k+1} z}^2_{0,\partial \cT_h}+ \norm{\tfrac{h_\cF^{\sfrac{1}{2}}}{k+1} \varphi}^2_{0,\partial \cT_h})^{\sfrac{1}{2}} \abs{ (\underline{\bv}, \underline{\hat{\bv}} )}_{\mathcal{U}\times \hat{\mathcal{U}}},
	\end{equation}
	for all $\boldsymbol{\Phi} \in \cH_2$ such that $\btau \in H^s(\cT_h, \mathbb{R}^{d\times d}_{\text{sym}})$ and $z, \varphi \in H^s(\cT_h)$ with $s\geq \sfrac{1}{2}$, and for all $(\underline{\bv}, \underline{\hat{\bv}} ) \in \mathcal{U}\times \hat{\mathcal{U}}$. The result follows from applying the discrete trace inequality \eqref{discTrace}.
\end{proof}

\begin{proposition}
	Problem~\eqref{sd}-\eqref{initial-R1-R2-h*c} admits a unique solution.
\end{proposition}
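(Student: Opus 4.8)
The plan is to read \eqref{sd}--\eqref{initial-R1-R2-h*c} as a finite-dimensional, linear differential--algebraic system and to reduce it, by static condensation of the trace unknown, to a linear ordinary differential equation with invertible mass matrix. First I would fix bases of $\cH_{1,h}$, $\hat{\cH}_{1,h}$ and $\cH_{2,h}$, collecting the volume degrees of freedom of $(\underline{\bu}_h,\boldsymbol{\Theta}_h)$ into a vector $\mathbf y(t)$ and the trace degrees of freedom of $\underline{\hat{\bu}}_h$ into a vector $\hat{\mathbf y}(t)$. Because the time derivatives in \eqref{sd} act only on $\underline{\bu}_h$ and $\boldsymbol{\Theta}_h$, the vector $\hat{\mathbf y}$ enters purely algebraically, so the system has the block form $M\dot{\mathbf y}+A\mathbf y+B\hat{\mathbf y}=\mathbf b(t)$ together with a constraint coupling $\hat{\mathbf y}$ to $\mathbf y$.

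The key step is to isolate and eliminate $\hat{\mathbf y}$. Testing \eqref{sd} with $(\mathbf 0,\underline{\hat{\bv}})$ and $\boldsymbol{\Phi}=\mathbf 0$ annihilates every term carrying a time derivative, a volume factor, or the data, and leaves, for all $\underline{\hat{\bv}}=[\hat\bv\mid\hat\bw\mid\hat\bs]\in\hat{\cH}_{1,h}$, the purely algebraic identity
\begin{multline*}
\dual{\tfrac{(k+1)^2}{h_\cF}\,\underline{\hat{\bu}}_h,\underline{\hat{\bv}}}_{\partial\cT_h}
=\dual{\tfrac{(k+1)^2}{h_\cF}\,\underline{\bu}_h,\underline{\hat{\bv}}}_{\partial\cT_h}
\\
-\dual{(\bsig_h-(\alpha p_h+\beta\theta_h)I_d)\bn,\hat\bv}_{\partial\cT_h}
+\dual{p_h\bn,\hat\bw}_{\partial\cT_h}+\dual{\theta_h\bn,\hat\bs}_{\partial\cT_h}.
\end{multline*}
The bilinear form $\dual{\tfrac{(k+1)^2}{h_\cF}\,\cdot,\cdot}_{\partial\cT_h}$ restricted to $\hat{\cH}_{1,h}$ is symmetric and positive definite: its weight is strictly positive, each interior face is counted twice in $\dual{\cdot,\cdot}_{\partial\cT_h}$, and since $\hat\bv$ vanishes on $\cF_h^\partial$ by definition of $\cP_{k+1}(\cF^0_h,\bbR^d)$, a trace with vanishing seminorm must be identically zero. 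Hence the associated Gram matrix $S$ is invertible, and the identity reads $S\hat{\mathbf y}=C\mathbf y$, giving $\hat{\mathbf y}=S^{-1}C\mathbf y$ as a linear function of the volume unknowns.

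Substituting $\hat{\mathbf y}=S^{-1}C\mathbf y$ into \eqref{sd} tested against the volume pairs $(\underline{\bv},\mathbf 0)$ and $\boldsymbol{\Phi}$ yields the closed system $M\dot{\mathbf y}+\tilde A\mathbf y=\mathbf b(t)$ with $\tilde A:=A+BS^{-1}C$. The mass matrix $M$ is block diagonal, its blocks being the Gram matrices of $\inner{\cdot,\cdot}_{\cH_1}$ and $\inner{\cdot,\cdot}_{\cH_2}$ on the respective polynomial spaces; by the norm equivalences \eqref{bound:rho} and \eqref{normH} these are symmetric positive definite, so $M$ is invertible and the problem is equivalent to the linear ODE $\dot{\mathbf y}=M^{-1}\bigl(\mathbf b(t)-\tilde A\mathbf y\bigr)$. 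Since $\underline{\bF},g\in\cC^1_{[0,T]}$, the forcing $\mathbf b$ is continuous, so the Cauchy--Lipschitz theorem (equivalently, the variation-of-constants formula for the constant matrix $-M^{-1}\tilde A$) produces a unique $\mathbf y\in\cC^1_{[0,T]}$ matching the volume initial data of \eqref{initial-R1-R2-h*c}; the trace unknown then follows from $\hat{\mathbf y}=S^{-1}C\mathbf y\in\cC^1_{[0,T]}$, completing the construction of the unique triple $(\underline{\bu}_h,\underline{\hat{\bu}}_h,\boldsymbol{\Theta}_h)$ on $[0,T]$.

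I expect the only genuinely delicate point to be the justification that the trace variable is slaved to the volume variables, that is, the positive definiteness of the stabilization form on $\hat{\cH}_{1,h}$ which guarantees that the differential--algebraic system is of index one; once this is in place, the invertibility of $M$ and the existence and uniqueness for the reduced linear ODE are routine.
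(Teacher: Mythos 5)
Your proof is correct and follows essentially the same route as the paper: test with $(\mathbf 0,\underline{\hat{\bv}})$ to isolate the purely algebraic equation for the trace unknown, eliminate it (the paper writes the condensed $\underline{\hat{\bu}}_h$ explicitly as an $L^2$-projection of a mean-value/flux expression, you write it abstractly as $S^{-1}C\mathbf y$ via the invertible stabilization Gram matrix), and reduce to a linear ODE whose mass matrix is invertible by \eqref{bound:rho} and \eqref{normH}. The two presentations differ only in that the paper makes the condensed flux formula explicit, while yours argues at the level of block matrices; the substance is identical.
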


\begin{proof}
We introduce the linear operator $\mmean{\cdot}: \mathcal{H}_{1,h} \to \hat{\mathcal{H}}_{1,h}$ defined by $\mmean{ \ubv } = [ \mmean{\bv} \mid \mmean{\bw} \mid \mmean{\bs}]$, where for an interior face $F=K\cap K'$, the mean value is $\mmean{\bv}|_F = \tfrac12 ( \bv|_K + \bv|_{K'} )|_F$. For a boundary face $F \in \cF_h^\partial$, $\mmean{\bv}|_F = \mathbf{0}$ due to the Dirichlet boundary condition for $\bu$, while $\mmean{\bw}|_F = \bw|_F$ and $\mmean{\bs}|_F = \bs|_F$ because $p=0$ and $\theta=0$ on $\Gamma$, respectively, which implies natural boundary conditions for the fluxes $\bq$ and $\br$.

The algebraic differential equation \eqref{sd} can be decomposed into the following system of equations:
\begin{align}\label{ADE1}
\begin{split}
\inner*{\dot{\underline{\bu}}_{h}, \underline{\bv}}_{\cH_1} &+
\inner{\dot{\boldsymbol{\Theta}}_h, \boldsymbol{\Phi} }_{\cH_2} + \inner{\mathfrak I \underline{\bu}_{h},\underline{\bv} }_\Omega
+ B_h(\boldsymbol{\Theta}_h, (\underline{\bv}, \mathbf 0 )) - B_h(\boldsymbol{\Phi}, (\underline{\bu}_{h}, \underline{\hat{\bu}}_{h} ))
\\
& \quad +\dual{ \tfrac{(k+1)^2}{h\cF}(\underline{\bu}_{h} - \underline{\hat{\bu}}_{h} ), \underline{\bv} }_{\partial \cT_h} = \inner*{\underline{\bF}, \underline{\bv}}_\Omega + \inner{g, \varphi}_\Omega, \quad \forall (\underline{\bv} , \boldsymbol{\Phi}) \in \mathcal{H}_{1,h} \times \mathcal{H}_{2,h}
\\
&B_h(\boldsymbol{\Theta}_h, (\mathbf 0, \hat{\underline{\bv} }) ) + \dual{\tfrac{(k+1)^2}{h\cF}(\underline{\bu}_{h} - \underline{\hat{\bu}}_{h} ), - \hat{\underline{\bv} } }_{\partial \cT_h} = 0 \quad \forall \hat{\underline{\bv} } \in \hat{\mathcal{H}}_{1,h}.
\end{split}
\end{align}
From the second equation in \eqref{ADE1}, we deduce that the numerical flux $\hat{\underline{\bu}}_h \in \hat{\mathcal{H}}_{1,h}$ satisfies:
$$
\dual{ \hat{\underline{\bu}}_h, \hat{\underline{\bv} } }_{\cF_h} = \dual{ \mmean{\underline{\bu}_h} - \tfrac{h\cF}{(k+1)^2} \stack*{ \mmean{(\bsig_h - (\alpha p_h + \beta\theta_h) I_d)\bn} \mid \mmean{p_h\bn} \mid \mmean{\theta_h\bn} }, \hat{\underline{\bv} } }_{\cF_h}\quad \forall \hat{\underline{\bv} } \in \hat{\mathcal{H}}_{1,h}.
$$
In other words, $\hat{\underline{\bu}}_h$ is the $L^2$-orthogonal projection of the expression
$$
\mmean{\underline{\bu}_h} - \tfrac{h\cF}{(k+1)^2} \stack*{ \mmean{(\bsig_h - (\alpha p_h + \beta\theta_h) I_d)\bn} \mid \mmean{p_h\bn} \mid \mmean{\theta_h\bn} }
$$
onto $\hat{\mathcal{H}}_{1,h}$. Substituting this expression for $\hat{\underline{\bu}}_h$ into the first equation of \eqref{ADE1} yields a system of ordinary differential equations (ODEs) with unknowns $\ubu_h \in \mathcal{C}^1{[0,T]}(\mathcal{H}_{1,h})$ and $\boldsymbol{\Theta}_h \in \mathcal{C}^1{[0,T]}(\mathcal{H}_{2,h})$, along with the initial conditions:
\begin{equation}\label{initODE}
\ubu_h(0) = \Pi^{k+1}_\cT \ubu^0, \quad \bsig_{h}(0)= \Pi_\cT^k \bsig^0, \quad p_h(0) = \Pi_\cT^k p^0, \quad \text{and} \quad \theta_h(0) = \Pi_\cT^k \theta^0.
\end{equation}
The well-posedness of this resulting first-order ODE system follows directly from the fact that $\inner{\cdot, \cdot}_{\mathcal{H}_1}$ and $\inner{\cdot, \cdot}_{\mathcal{H}_2}$ are inner products on the finite-dimensional function spaces $\mathcal{H}_{1,h}$ and $\mathcal{H}_{2,h}$, respectively.
\end{proof}

We now proceed to verify the consistency of the HDG scheme \eqref{sd} with the continuous problem \eqref{eq:weakform}.

\begin{proposition}\label{consistency}
Let $\underline{\bu} = [\bu \mid \bq \mid \br] \in \mathcal{C}_{[0,T]}^1(\cH_1) \cap \mathcal{C}^0_{[0,T]}(\cX_1)$ and $\boldsymbol{\Theta} = (\bsig , \stack{p \sep \theta}) \in \mathcal{C}^1_{[0,T]}(\cH_2) \cap \mathcal{C}^0_{[0,T]}(\cX_2)$ be the solutions of \eqref{eq:weakform}. Assume that $\bsig - (\alpha p + \beta\theta) I_d \in\cC_{[0,T]}^0(H^s(\cT_h, \bbS))$ and $\underline{\bu} \in \cC_{[0,T]}^0(H^s(\cT_h, \mathbb{R}^{d \times 3}))$, with $s>\sfrac{1}{2}$. Then, the following consistency equation holds:
\begin{align}\label{consistent}
	\begin{split}
		&\inner*{\dot{\underline{\bu}}, \underline{\bv}_h}_{\cH_1} +
		\inner{\dot{\boldsymbol{\Theta}}, \boldsymbol{\Phi}_h }_{\cH_2} + \inner{\mathfrak I \underline{\bu} ,\underline{\bv}_h }_\Omega
		+ B_h(\boldsymbol{\Theta}, (\underline{\bv}_{h}, \underline{\hat{\bv}}_{h} ))
		\\& \quad
	  - B_h(\boldsymbol{\Phi}_h, (\underline{\bu}, \underline{\bu}|_{\partial \cF_h} ))
	  + \dual{ \tfrac{(k+1)^2}{h_\cF}(\underline{\bu} - \underline{\bu}|_{\partial \cF_h}) , \underline{\bv}_{h} - \underline{\hat{\bv}}_{h} }_{\partial \cT_h} = \inner*{\underline{\bF}, \underline{\bv}_h}_\Omega + \inner{g, \varphi_h}_\Omega,
	\end{split}
\end{align}
for all $(\underline{\bv}_{h}, \underline{\hat{\bv}}_{h} ) \in \mathcal{H}_{1,h} \times \hat{\mathcal{H}}_{1,h}$ and $\boldsymbol{\Phi}_h = (\btau_h, \stack{z_h \sep \varphi_h}) \in \cH_{2,h}$.
\end{proposition}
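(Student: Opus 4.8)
The plan is to exploit that the exact pair $(\underline{\bu},\boldsymbol{\Theta})$ delivered by Theorem~\ref{thm:Hille-Yosida} is a \emph{strong} solution, so it satisfies the governing relations \eqref{compact1}--\eqref{eq:TPvec} pointwise in $L^2(\Omega)$ for each $t$. I would test the momentum block \eqref{compact1} against $\underline{\bv}_h$ and the constitutive/balance block \eqref{eq:TPvec} against $\boldsymbol{\Phi}_h$, integrate over $\Omega$, and integrate by parts element by element. The interelement boundary integrals thereby generated must reassemble into the HDG form $B_h$; establishing \eqref{consistent} then amounts to showing that the spurious residual face contributions vanish, which is where the regularity hypotheses and the homogeneous boundary conditions \eqref{newBC} enter.

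For the momentum block, pairing \eqref{compact1} with $\underline{\bv}_h$ yields $\inner{\dot{\underline{\bu}},\underline{\bv}_h}_{\cH_1}$ (by the definition of $\inner{\cdot,\cdot}_{\cH_1}$ applied to $\dot{\underline{\bu}}R$), the zeroth-order term $\inner{\mathfrak I\underline{\bu},\underline{\bv}_h}_\Omega$ which carries over unchanged, and $\inner{\stack*{-\bdiv M\mid\nabla p\mid\nabla\theta},\underline{\bv}_h}_{\cT_h}$ with $M:=\bsig-(\alpha p+\beta\theta)I_d$. Integrating this last term by parts on each $K$ and using the symmetry of $M$ produces the volume part of $B_h(\boldsymbol{\Theta},(\underline{\bv}_h,\underline{\hat{\bv}}_h))$ plus the boundary integrals $-\dual{M\bn,\bv_h}_{\partial\cT_h}+\dual{p\bn,\bw_h}_{\partial\cT_h}+\dual{\theta\bn,\bs_h}_{\partial\cT_h}$. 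Inserting the single-valued traces $\underline{\hat{\bv}}_h$ at no cost, I would recognize the result as $B_h(\boldsymbol{\Theta},(\underline{\bv}_h,\underline{\hat{\bv}}_h))$ up to the residual $-\dual{M\bn,\hat{\bv}_h}_{\partial\cT_h}+\dual{p\bn,\hat{\bw}_h}_{\partial\cT_h}+\dual{\theta\bn,\hat{\bs}_h}_{\partial\cT_h}$, yielding the momentum half of \eqref{consistent} once this residual is shown to be zero.

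For the constitutive/balance block I pair the elastic law and the two balance equations of \eqref{eq:TPvec} with $\boldsymbol{\Phi}_h=(\btau_h,\stack*{z_h\sep\varphi_h})$ in $\inner{\cdot,\cdot}_{\cH_2}$. Substituting $\cA\dot{\bsig}=\beps(\bu)$ and $Q\stack*{\dot p\sep\dot\theta}=\stack*{0\sep g}-\stack*{\alpha\div\bu+\div\bq\sep\beta\div\bu+\div\br}$ and using $I_d:\beps(\bu)=\div\bu$, the term $\inner{\dot{\boldsymbol{\Theta}},\boldsymbol{\Phi}_h}_{\cH_2}$ reduces to $\inner{g,\varphi_h}_\Omega$ plus exactly the volume part of $-B_h(\boldsymbol{\Phi}_h,(\underline{\bu},\cdot))$. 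Since $\underline{\bu}|_{\partial\cF_h}$ is the trace of the volume field $\underline{\bu}$ on $\partial\cT_h$, we have $\underline{\bu}-\underline{\bu}|_{\partial\cF_h}=\mathbf 0$ there, so the face integrals of $B_h(\boldsymbol{\Phi}_h,(\underline{\bu},\underline{\bu}|_{\partial\cF_h}))$ and the stabilization term of \eqref{consistent} both drop out. Adding the two tested identities then reproduces \eqref{consistent} term by term.

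I expect the principal difficulty to lie in the momentum block, namely in showing that the residual $-\dual{M\bn,\hat{\bv}_h}_{\partial\cT_h}+\dual{p\bn,\hat{\bw}_h}_{\partial\cT_h}+\dual{\theta\bn,\hat{\bs}_h}_{\partial\cT_h}$ vanishes. This is where the hypotheses do the real work and must be invoked on two distinct levels: the element-wise regularity $\bsig-(\alpha p+\beta\theta)I_d\in\cC^0_{[0,T]}(H^s(\cT_h,\bbS))$ and $\underline{\bu}\in\cC^0_{[0,T]}(H^s(\cT_h,\bbR^{d\times 3}))$ with $s>\sfrac{1}{2}$ legitimize the element-by-element integration by parts and render all traces well-defined in $L^2(\partial\cT_h)$ by the trace theorem, while the global memberships $\boldsymbol{\Theta}\in\cX_2$ and $\underline{\bu}\in\cX_1$ supply the continuity of the normal trace of $M$ (from $\bdiv M\in L^2(\Omega,\bbR^d)$) and of the traces of $p,\theta$ (from $p,\theta\in H^1(\Omega)$) across interior faces. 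Combined with the single-valuedness of $\underline{\hat{\bv}}_h$, the opposite orientation $\bn_K=-\bn_{K'}$ on shared faces, and the homogeneous conditions \eqref{newBC} that annihilate the contributions on $\Gamma$, these produce the required cancellation. The delicate bookkeeping point is to keep track, pairing by pairing, of whether a face integral contracts a full vector or only a normal component, since only the latter is guaranteed single-valued for the $H(\div,\Omega)$ variables $\bq$ and $\br$.
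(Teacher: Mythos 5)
Your proposal is correct and follows essentially the same route as the paper: element-wise integration by parts, cancellation of the hatted face terms via single-valuedness of the normal traces of $\bsig-(\alpha p+\beta\theta)I_d$, $p$, $\theta$ together with the boundary conditions, reduction of $B_h(\boldsymbol{\Phi}_h,(\underline{\bu},\underline{\bu}|_{\partial\cF_h}))$ to its volume part, and substitution of the strong momentum equation. The only (cosmetic) difference is in the final block: you substitute the strong constitutive and balance laws directly, whereas the paper verifies the residual identity by applying Green's formula to the second equation of \eqref{eq:weakform} and invoking the density of $\cX_2$ in $\cH_2$ — both are valid.
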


\begin{proof}
	The continuity of the normal components of $\bsig - (\alpha p + \beta\theta) I_d$, $p I_d$, and $\theta I_d$ across element interfaces in $\cT_h$, coupled with the homogeneous Dirichlet boundary conditions on $\Gamma$, yields:
	\begin{align*}
		\begin{split}
			B_h(\boldsymbol{\Theta}, (\underline{\bv}_{h}, \underline{\hat{\bv}}_{h} )) &=
			\inner*{\bsig - (\alpha p + \beta\theta) I_d, \beps(\bv_{h})}_{\cT_h}
			- \dual*{(\bsig - (\alpha p + \beta\theta) I_d)\bn, \bv_{h} - \hat{\bv}_{h} }_{\partial \mathcal{T}_h} \\
			&\qquad - \inner*{p, \div\bw_h }_{\cT_h}
			+ \dual*{p\bn, \bw_h - \hat{\bw}_h }_{\partial \mathcal{T}_h} \\
			&\qquad - \inner*{\theta, \div\bs_h }_{\cT_h}
			+ \dual*{\theta\bn, \bs_h - \hat{\bs}_h }_{\partial \mathcal{T}_h} \\
			& = \sum_{K \in \cT_h} \Big( \inner*{\bsig - (\alpha p + \beta\theta) I_d, \beps(\bv_{h})}_{K} - \inner*{p, \div\bw_h}_{K} - \inner*{ \theta, \div\bs_h}_{K}  \\
			&\qquad  - \dual*{(\bsig - (\alpha p + \beta\theta) I_d)\bn_K, \bv_{h}}_{F_K} + \dual*{p\bn_K, \bw_h}_{F_K} + \dual*{\theta\bn_K, \bs_h}_{F_K} \Big).
		\end{split}
		\end{align*}
		Applying element-wise integration by parts to the right-hand side of the previous identity, and utilizing the identities from the continuous problem \eqref{eq:TP-a}, \eqref{eq:TP-b}, and \eqref{eq:TP-f} in the form 
		$$\stack*{ - \bdiv \bigl(\bsig - (\alpha p + \beta\theta) I_d\bigr) \mid \nabla p \mid \nabla \theta } = \underline{\bF} - \mathfrak I \underline{\bu} - \dot{\underline{\bu}}R,
		$$ 
		we obtain:
		\begin{align}\label{b2}
			\begin{split}
				B_h(\boldsymbol{\Theta}, (\underline{\bv}_{h}, \underline{\hat{\bv}}_{h} )) &= \inner*{-\bdiv\bigl(\bsig - (\alpha p + \beta\theta) I_d\bigr), \bv_{h}}_{\cT_h} + \inner*{\nabla p, \bw_h}_{\cT_h} + \inner*{\nabla \theta, \bs_h}_{\cT_h} \\
				& = \inner*{\underline{\bF}, \underline{\bv}_h}_{\Omega} - \inner*{\mathfrak I \underline{\bu}, \underline{\bv}_h}_{\Omega} - \inner*{\dot{\underline{\bu}} R, \underline{\bv}_h}_{\Omega}.
			\end{split}
		\end{align}

		On the other hand, we have that:
		\begin{equation}\label{b1}
			B_h(\boldsymbol{\Phi}_h, (\underline{\bu}, \underline{\bu}|_{\cF_h})) = \inner*{\btau_h - (\alpha z_h + \beta \varphi_h) I_d, \beps(\bu) }_{\cT_h} -
			\inner*{z_h, \div\bq}_{\cT_h} - \inner*{\varphi_h, \div\br}_{\cT_h} \quad \forall \boldsymbol{\Phi}_h \in \cH_{2,h}.
		\end{equation}
		Substituting \eqref{b2} and \eqref{b1} into the consistency equation \eqref{consistent} gives:
		\begin{align}\label{consistent0}
			\begin{split}
				\inner*{\dot{\underline{\bu}}, \underline{\bv}_h}_{\cH_1} &+
				\inner{\dot{\boldsymbol{\Theta}}, \boldsymbol{\Phi}_h }_{\cH_2} + \inner{\mathfrak I \underline{\bu} ,\underline{\bv}_h }_\Omega
				+ \left( \inner*{\underline{\bF}, \underline{\bv}_h}_{\Omega} - \inner*{\mathfrak I \underline{\bu}, \underline{\bv}_h}_{\Omega} - \inner*{\dot{\underline{\bu}} R, \underline{\bv}_h}_{\Omega} \right)
				\\& \quad
				- \left( \inner*{\btau_h - (\alpha z_h + \beta \varphi_h) I_d, \beps(\bu) }_{\cT_h} -
				\inner*{z_h, \div\bq}_{\cT_h} - \inner*{\varphi_h, \div\br}_{\cT_h} \right)
				\\&
				= \inner*{\underline{\bF}, \underline{\bv}_h}_\Omega + \inner{g, \varphi_h}_\Omega.
			\end{split}
		\end{align}
		Simplifying terms and recalling that $\inner*{\dot{\underline{\bu}} R, \underline{\bv}_h}_{\Omega} = \inner*{\dot{\underline{\bu}}, \underline{\bv}_h}_{\cH_1}$, the equation becomes:
		\begin{align}\label{last}
			\inner{\dot{\boldsymbol{\Theta}}, \boldsymbol{\Phi}_h }_{\cH_2}
			&- \inner*{\btau_h - (\alpha z_h + \beta \varphi_h) I_d, \beps(\bu) }_{\cT_h} + \inner*{z_h, \div\bq}_{\cT_h} + \inner*{\varphi_h, \div\br}_{\cT_h} = \inner{g, \varphi_h}_\Omega.
		\end{align}
		Now, we point out that applying Green's formula to the second equation of \eqref{eq:weakform} yields:
		$$
		\inner{\dot{\boldsymbol{\Theta}}, \boldsymbol{\Phi}}_{\cH_2} - \inner{\btau - (\alpha z + \beta \varphi) I_d, \beps(\bu)}_\Omega + \inner{ z, \div \bq}_\Omega + \inner{ \varphi ,\div \br}_\Omega = \inner{g, \varphi}_\Omega,
		$$
		for all $\boldsymbol{\Phi}\in \cX_2$. Hence, the density of the embedding $\cX_2 \hookrightarrow \cH_{2}$ ensures that equation \eqref{last} holds true, thereby verifying consistency.
\end{proof}

\section{Convergence analysis of the HDG method}\label{sec:convergence}

The convergence analysis of the HDG method \eqref{sd} follows standard procedures. Using the stability of the HDG method and the consistency result \eqref{consistent}, we prove that the projected errors
\begin{align*}
    \begin{alignedat}{2}
        \be_{u, h}(t)      &:= \Pi^{k+1}_\cT \bu  - \bu_{h}, 
		&\quad
		\be_{\hat u, h}(t) &:= \Pi_\cF^{k+1}(\bu|_{\cF_h}) - \hat{\bu}_{h}, \\
		\be_{q,h}(t)       &:= \Pi^{k+1}_\cT \bq  - \bq_{h}, 
		&\quad 
		\be_{\hat q, h}(t) &:= \Pi_\cF^{k+1}(\bq|_{\cF_h}) - \hat{\bq}_{h},
		\\
		\be_{r,h}(t)       &:= \Pi^{k+1}_\cT \br  - \br_{h}, 
		&\quad 
        \be_{\hat r, h}(t) &:= \Pi_\cF^{k+1}(\br|_{\cF_h}) - \hat{\br}_{h}, \\
        \be_{\sigma,h}(t)    &:= \Pi_\cT^k\bsig - \bsig_h, &\quad
        e_{p,h}(t)         &:= \Pi_\cT^kp - p_h, \quad
		e_{\theta,h}(t)      := \Pi_\cT^k\theta - \theta_h.
    \end{alignedat}
\end{align*}
can be estimated in terms of the approximation errors
\begin{align*}
    \begin{alignedat}{2}
        \bchi_{u}(t)       &:= \bu  - \Pi^{k+1}_\cT \bu, &\quad
		\bchi_{\hat u}(t)  &:= \bu|_{\cF_h} - \Pi_\cF^{k+1}(\bu|_{\cF_h}), \\
        \bchi_{q}(t)       &:= \bq  - \Pi^{k+1}_\cT \bq, 
		&\quad
        \bchi_{\hat q}(t)  &:= \bq|_{\cF_h} - \Pi_\cF^{k+1}(\bq|_{\cF_h}),
		\\
		\bchi_{r}(t)       &:= \br  - \Pi^{k+1}_\cT \br, &\quad
		\bchi_{\hat r}(t)  &:= \br|_{\cF_h} - \Pi_\cF^{k+1}(\br|_{\cF_h}),
        \\
        \bchi_\sigma(t)      &:= \bsig - \Pi_\cT^k\bsig, &\quad
        \chi_p(t)           &:= p - \Pi_\cT^kp, \quad
		\chi_\theta(t)      := \theta - \Pi_\cT^k\theta.
    \end{alignedat}
\end{align*}
As before, we concatenate the error terms corresponding to velocities and heat flux by defining
\begin{align*}
    \begin{alignedat}{2}
        \underline{\be}_{u,h}(t)       &:= \stack*{\be_{ u, h} \mid \be_{q, h} \mid \be_{r, h}}, &\quad
        \underline{\be}_{\hat u,h}(t)       &:= \stack*{\be_{\hat u, h} \mid \be_{\hat q, h} \mid \be_{\hat r, h}}, \\
        \underline{\bchi}_u(t)  &:= \stack*{ \bchi_{u} \mid \bchi_{q} \mid \bchi_{r}}, &\quad
        \underline{\bchi}_{\hat u}(t)  &:= \stack*{\bchi_{\hat u}  \mid \bchi_{\hat q} \mid \bchi_{\hat r}}.
    \end{alignedat}
\end{align*}
It will also be useful to introduce the notations $\boldsymbol{\mathcal{E}}_{\Theta,h} \coloneq (\be_{\sigma,h}, \stack{e_{p,h} \sep e_{\theta,h}})$ and $\boldsymbol{\mathcal{X}}_{\Theta} \coloneq (\bchi_\sigma, \stack{\chi_p \sep \chi_\theta})$.

\begin{lemma}\label{stab_sd}
Under the conditions of Proposition 4.3, there exists a constant $C>0$ independent of $h$ and $k$ such that
	\begin{align}\label{stab}
		\begin{split}
			&\max_{[0, T]}\norm{ \underline{\be}_{u,h}}^2_{\cH_1} + \max_{[0, T]} \norm{\boldsymbol{\mathcal{E}}_{\Theta,h}}^2_{\cH_2} + \int_0^T \inner{\mathfrak I \underline{\be}_{u,h}(s), \underline{\be}_{u,h}(s)}_\Omega \,\text{d}s + \int_0^T \norm{\tfrac{k+1}{h_\cF^{\sfrac{1}{2}}}(\underline{\be}_{u,h} - \underline{\be}_{\hat{u}, h} )}^2_{0,\partial \cT_h}\,\text{d}s
			\\&
 		 \leq C \int_0^T \Big( \abs{ (\underline{\bchi}_u,\underline{\bchi}_{\hat u})}^2_{\mathcal{U} \times \hat{\mathcal{U}}} + \norm{\tfrac{h_\cF^{\sfrac{1}{2}}}{k+1} \bchi_\sigma}^2_{0,\partial \cT_h} + \norm{\tfrac{h_\cF^{\sfrac{1}{2}}}{k+1} \chi_p}^2_{0,\partial \cT_h} + \norm{\tfrac{h_\cF^{\sfrac{1}{2}}}{k+1} \chi_\theta}^2_{0,\partial \cT_h}  \Big) \, \text{d}t.
		\end{split}
	\end{align} 
\end{lemma}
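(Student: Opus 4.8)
The plan is to derive a homogeneous error identity and close it by a direct energy argument, testing against the projected errors themselves. First I would subtract the scheme \eqref{sd} from the consistency identity \eqref{consistent}, tested against a common triple $(\underline{\bv},\underline{\hat{\bv}},\boldsymbol{\Phi})$; the data on the right cancel, and writing the total errors through the splittings $\underline{\bu}-\underline{\bu}_h=\underline{\bchi}_u+\underline{\be}_{u,h}$, $\boldsymbol{\Theta}-\boldsymbol{\Theta}_h=\boldsymbol{\mathcal{X}}_\Theta+\boldsymbol{\mathcal{E}}_{\Theta,h}$, and $\underline{\bu}|_{\partial\cF_h}-\underline{\hat{\bu}}_h=\underline{\bchi}_{\hat u}+\underline{\be}_{\hat u,h}$ leaves an identity coupling the projected and approximation errors. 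I would then choose $\underline{\bv}=\underline{\be}_{u,h}$, $\underline{\hat{\bv}}=\underline{\be}_{\hat u,h}$, and $\boldsymbol{\Phi}=\boldsymbol{\mathcal{E}}_{\Theta,h}$.

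Next I would discard, via the defining $L^2$-orthogonality of $\Pi^k_\cT,\Pi^{k+1}_\cT,\Pi^{k+1}_\cF$, every approximation-error contribution carried by the symmetric terms. Because $R,\cC,Q$ are constant, the inertial parts $\inner*{\partial_t\underline{\bchi}_u,\underline{\be}_{u,h}}_{\cH_1}$, $\inner{\partial_t\boldsymbol{\mathcal{X}}_\Theta,\boldsymbol{\mathcal{E}}_{\Theta,h}}_{\cH_2}$ and the damping part $\inner{\mathfrak I\underline{\bchi}_u,\underline{\be}_{u,h}}_\Omega$ vanish, since $\partial_t\underline{\bchi}_u$, $\mathfrak I\underline{\bchi}_u$ and $\partial_t\boldsymbol{\mathcal{X}}_\Theta$ are orthogonal to the corresponding spaces $\cP_{k+1}$ and $\cP_k$. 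The surviving symmetric parts assemble into $\tfrac{1}{2}\tfrac{\mathrm{d}}{\mathrm{d}t}\bigl(\norm{\underline{\be}_{u,h}}^2_{\cH_1}+\norm{\boldsymbol{\mathcal{E}}_{\Theta,h}}^2_{\cH_2}\bigr)$ together with the nonnegative damping $\inner{\mathfrak I\underline{\be}_{u,h},\underline{\be}_{u,h}}_\Omega$, while the two copies of $B_h(\boldsymbol{\mathcal{E}}_{\Theta,h},(\underline{\be}_{u,h},\underline{\be}_{\hat u,h}))$ arising from the coupling blocks cancel by antisymmetry. The stabilization term contributes $\norm{\tfrac{k+1}{h_\cF^{1/2}}(\underline{\be}_{u,h}-\underline{\be}_{\hat u,h})}^2_{0,\partial\cT_h}$ plus a cross term in $\underline{\bchi}_u-\underline{\bchi}_{\hat u}$.

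Two families of approximation-error terms then remain on the right. In $B_h(\boldsymbol{\mathcal{X}}_\Theta,(\underline{\be}_{u,h},\underline{\be}_{\hat u,h}))$ the volume integrals vanish outright, because $\beps(\be_{u,h}),\div\be_{q,h},\div\be_{r,h}\in\cP_k$ are orthogonal to $\bchi_\sigma,\chi_p,\chi_\theta$; only facet terms such as $\dual{(\bchi_\sigma-(\alpha\chi_p+\beta\chi_\theta)I_d)\bn,\be_{u,h}-\be_{\hat u,h}}_{\partial\cT_h}$ survive, which I would bound by $\norm{\tfrac{h_\cF^{1/2}}{k+1}\bchi_\sigma}_{0,\partial\cT_h}$ (and the scalar analogues for $\chi_p,\chi_\theta$) times the stabilization jump. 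The stabilization cross term is controlled by $\norm{\tfrac{k+1}{h_\cF^{1/2}}(\underline{\bchi}_u-\underline{\bchi}_{\hat u})}_{0,\partial\cT_h}$ times the same jump. In each product Young's inequality separates an approximation square---matching the facet norms of $\bchi_\sigma,\chi_p,\chi_\theta$ and the trace part of the seminorm \eqref{norm:sym}---from a small multiple of $\norm{\tfrac{k+1}{h_\cF^{1/2}}(\underline{\be}_{u,h}-\underline{\be}_{\hat u,h})}^2_{0,\partial\cT_h}$, which the stabilization absorbs. Integrating over $(0,t)$, invoking the vanishing initial projected errors from \eqref{initODE}, and maximizing over $[0,T]$ then yields \eqref{stab}; no Gronwall step is needed.

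The crux---and the step I expect to require the most care---is the exact identity $B_h(\boldsymbol{\mathcal{E}}_{\Theta,h},(\underline{\bchi}_u,\underline{\bchi}_{\hat u}))=0$, which is what keeps $\norm{\boldsymbol{\mathcal{E}}_{\Theta,h}}_{\cH_2}$ off the right-hand side. Here the approximation error occupies the velocity slot, so the volume integrals are not termwise orthogonal; instead I would integrate by parts elementwise and use $\bdiv\be_{\sigma,h},\nabla e_{p,h},\nabla e_{\theta,h}\in\cP_{k-1}$ against $\underline{\bchi}_u\perp\cP_{k+1}$ to annihilate the interior parts. The boundary remainders, once merged with the native facet terms of $B_h$, collapse to $\dual{(\be_{\sigma,h}-(\alpha e_{p,h}+\beta e_{\theta,h})I_d)\bn,\bchi_{\hat u}}_{\partial\cT_h}$ and its scalar counterparts; these vanish because the discrete stress and scalars restrict to $\cP_k\subset\cP_{k+1}$ on each face, while the single-valued trace errors $\bchi_{\hat u},\bchi_{\hat q},\bchi_{\hat r}$ are $L^2(\cF_h)$-orthogonal to $\cP_{k+1}(\cF_h)$. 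This cancellation, being an identity rather than an estimate, is precisely what renders the final bound clean and Gronwall-free.
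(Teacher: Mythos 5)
Your proposal is correct and follows the same overall skeleton as the paper's proof: form the error equation by combining the scheme with the consistency identity, test with $(\underline{\be}_{u,h},\underline{\be}_{\hat u,h},\boldsymbol{\mathcal{E}}_{\Theta,h})$, kill the inertial and damping approximation terms by $L^2$-orthogonality of the projections, reduce $B_h(\boldsymbol{\mathcal{X}}_\Theta,(\underline{\be}_{u,h},\underline{\be}_{\hat u,h}))$ to facet terms, and close with Cauchy--Schwarz, Young, and the vanishing initial projected errors. The one genuine deviation is your treatment of $B_h(\boldsymbol{\mathcal{E}}_{\Theta,h},(\underline{\bchi}_u,\underline{\bchi}_{\hat u}))$: the paper keeps this term, bounds it by $C\,\norm{\boldsymbol{\mathcal{E}}_{\Theta,h}}_{\cH_2}\,\abs{(\underline{\bchi}_u,\underline{\bchi}_{\hat u})}_{\mathcal{U}\times\hat{\mathcal{U}}}$ via the continuity estimate \eqref{Bhh}, and then absorbs the resulting $\int_0^T\norm{\boldsymbol{\mathcal{E}}_{\Theta,h}}^2_{\cH_2}\,\mathrm{d}t$ into $\max_{[0,T]}\norm{\boldsymbol{\mathcal{E}}_{\Theta,h}}^2_{\cH_2}$ (at the cost of a $T$-dependent constant), whereas you show the term vanishes identically by a second elementwise integration by parts, using $\bdiv\be_{\sigma,h},\nabla e_{p,h},\nabla e_{\theta,h}\in\cP_{k-1}(\cT_h)\perp\underline{\bchi}_u$ in the volume and the $L^2(\cF_h)$-orthogonality of $\underline{\bchi}_{\hat u}$ to $\cP_{k+1}(\cF_h)$ on the faces (the one-sided traces of the degree-$k$ discrete stress and scalars indeed lie in $\cP_{k+1}$ on each face, and $\bchi_{\hat u}$ vanishes on boundary faces by the Dirichlet condition). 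Your identity is valid under the same implicit assumption of elementwise-constant coefficients already used to discard the inertial and damping terms, and it buys a marginally sharper and cleaner bound — no $\norm{\boldsymbol{\mathcal{E}}_{\Theta,h}}_{\cH_2}$ on the right-hand side to absorb, and only the trace part of the seminorm $\abs{(\underline{\bchi}_u,\underline{\bchi}_{\hat u})}_{\mathcal{U}\times\hat{\mathcal{U}}}$ is actually needed — at the price of one extra integration-by-parts argument that the paper's shorter route avoids. Both arguments establish \eqref{stab}.
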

\begin{proof}
	 By virtue of the consistency result \eqref{consistent}, it is straightforward that the projected errors satisfy the following orthogonality property:
	 \begin{align}\label{orthog}
		\begin{split}
			& \inner*{\dot{\underline{\be} }_{u,h}, \underline{\bv}}_{\cH_1} +
			\inner*{\dot{\boldsymbol{\mathcal{E}}}_{\Theta,h}, \boldsymbol{\Phi} }_{\cH_2}  + \inner{\mathfrak I \underline{\be}_{u,h},\underline{\bv}}_\Omega
			+ B_h(\boldsymbol{\mathcal{E}}_{\Theta,h}, (\underline{\bv}, \underline{\hat{\bv}} ))
			\\
			&\qquad \qquad  - B_h(\boldsymbol{\Phi}, (\underline{\be}_{u,h}, \underline{\be}_{\hat{u},h} ))
			 +\dual{ \tfrac{(k+1)^2}{h_\cF}(\underline{\be}_{u,h} - \underline{\be}_{\hat{u}, h} ), \underline{\bv} - \underline{\hat{\bv}} }_{\partial \cT_h}
			 \\ &
			  =
			  -\inner*{\dot{\underline{\bchi} }_{u}, \underline{\bv}}_{\cH_1} -
			\inner*{\dot{\boldsymbol{\mathcal{X}}}_{\Theta}, \boldsymbol{\Phi} }_{\cH_2}  - \inner{\mathfrak I \underline{\bchi}_u,\underline{\bv}}_\Omega
			- B_h(\boldsymbol{\mathcal{X}}_{\Theta}, (\underline{\bv}, \underline{\hat{\bv}} ))
			\\ &
			\qquad \qquad   + B_h(\boldsymbol{\Phi}, (\underline{\bchi}_{u}, \underline{\bchi}_{\hat{u}} ))
			 - \dual{ \tfrac{(k+1)^2}{h_\cF}(\underline{\bchi}_{u} - \underline{\bchi}_{\hat{u}} ), \underline{\bv} - \underline{\hat{\bv}} }_{\partial \cT_h}
		\end{split}
	\end{align}
	for all $\boldsymbol{\Phi} = (\btau, \stack{z \sep \varphi}) \in \mathcal H_{2,h}$ and $(\underline{\bv} , \hat{\underline{\bv}} )\in \mathcal{H}_{1,h} \times \hat{\mathcal{H}}_{1,h}$.

	The second line of \eqref{orthog} simplifies using the $L^2$-orthogonality of the projectors:
 	\begin{align*}
		\inner{\dot{\underline{\bchi} }_{u}, \underline{\bv}}_{\cH_1} + \inner{\mathfrak I\underline{\bchi}_u,\underline{\bv}}_\Omega =0 \quad \forall \underline{\bv} \in \cH_{1,h},
	\quad \text{and} \quad
		\inner*{\dot{\boldsymbol{\mathcal{X}}}_{\Theta}, \boldsymbol{\Phi} }_{\cH_2}   =0 \quad \forall \boldsymbol{\Phi}\in \cH_{2,h}.
	\end{align*}
 	Furthermore, for all $(\underline{\bv} , \hat{\underline{\bv}} )\in \cH_{1,h} \times \hat{\cH}_{1,h}$, the following identity holds:
 	\[
		B_h(\boldsymbol{\mathcal{X}}_{\Theta}, (\underline{\bv}, \underline{\hat{\bv}} )) = -\dual*{(\bchi_\sigma - (\alpha \chi_p + \beta \chi_\theta) I_d)\bn, \bv - \hat{\bv}}_{\partial \cT_h} + \dual*{\chi_p\bn, \bw - \hat{\bw}}_{\partial \cT_h} + \dual*{\chi_\theta\bn, \bs - \hat{\bs}}_{\partial \cT_h}.
 	\]
 	This is a direct consequence of the definition of $B_h$ and the fact that $\beps(\cP_{k+1}(\cT_h, \bbR^d)) \subset \cP_k(\cT_h, \bbS)$ and $\div(\cP_{k+1}(\cT_h, \bbR^d)) \subset \cP_k(\cT_h)$.

The choices $\boldsymbol{\Phi} = \boldsymbol{\mathcal{E}}_{\Theta,h}$ and $(\underline{\bv} ,\hat{\underline{\bv} }) = (\underline{\be}_{u,h}, \underline{\be}_{\hat{u},h} )$ in \eqref{orthog} and the Cauchy-Schwarz inequality together with \eqref{Bhh} yield
\begin{align*}
	\frac12 \frac{\text{d}}{\text{d}t} &\Big\{\norm{ \underline{\be}_{u,h}}^2_{\cH_1}
	+ \norm{\boldsymbol{\mathcal{E}}_{\Theta,h}}^2_{\cH_2}  \Big\}
	+ \inner{\mathfrak I \underline{\be}_{u,h},\underline{\be}_{u,h}}_\Omega
	+ \norm{\tfrac{k+1}{h_\cF^{\sfrac{1}{2}}}(\underline{\be}_{u,h} - \underline{\be}_{\hat{u}, h} )}^2_{0,\partial \cT_h} \\
	&= \dual*{(\bchi_\sigma - (\alpha \chi_p + \beta \chi_\theta) I_d)\bn, \be_{u,h} - \be_{\hat{u}_h}}_{\partial \cT_h}
	- \dual*{\chi_p\bn, \be_{q,h} - \be_{\hat{q},h}}_{\partial \cT_h}
	- \dual*{\chi_\theta\bn, \be_{r,h} - \be_{\hat{r},h}}_{\partial \cT_h} \\
	&\quad + B_h(\boldsymbol{\mathcal{E}}_{\Theta,h}, (\underline{\bchi}_{u}, \underline{\bchi}_{\hat{u}} ))
	- \dual{ \tfrac{(k+1)^2}{h_\cF}(\underline{\bchi}_{u} - \underline{\bchi}_{\hat{u}} ), \underline{\be}_{u,h} - \underline{\be}_{\hat{u},h} }_{\partial \cT_h} \\
	&\leq \norm{\tfrac{h_\cF^{\sfrac{1}{2}}}{k+1} (\bchi_\sigma - (\alpha \chi_p + \beta \chi_\theta) I_d)\bn}_{0,\partial \cT_h}
	\norm{\tfrac{k+1}{h_\cF^{\sfrac{1}{2}}} (\be_{u,h} - \be_{\hat{u}_h})}_{0,\partial \cT_h} \\
	&\quad + \norm{\tfrac{h_\cF^{\sfrac{1}{2}}}{k+1} \chi_p \bn}_{0,\partial \cT_h}
	\norm{\tfrac{k+1}{h_\cF^{\sfrac{1}{2}}} (\be_{q,h} - \be_{\hat{q},h})}_{0,\partial \cT_h}  + \norm{\tfrac{h_\cF^{\sfrac{1}{2}}}{k+1} \chi_\theta \bn}_{0,\partial \cT_h}
	\norm{\tfrac{k+1}{h_\cF^{\sfrac{1}{2}}} (\be_{r,h} - \be_{\hat{r},h})}_{0,\partial \cT_h} \\
	&\quad + C \norm{\boldsymbol{\mathcal{E}}_{\Theta,h}}_{\cH_2}
	\abs{ (\underline{\bchi}_u,\underline{\bchi}_{\hat u})}_{\mathcal{U} \times \hat{\mathcal{U}}}  + \norm{\tfrac{k+1}{h_\cF^{\sfrac{1}{2}}} (\underline{\bchi}_{u} - \underline{\bchi}_{\hat{u}})  }_{0,\partial \cT_h}
	\norm{\tfrac{k+1}{h_\cF^{\sfrac{1}{2}}} (\underline{\be}_{u,h} - \underline{\be}_{\hat{u},h})  }_{0,\partial \cT_h}.
\end{align*}
We notice that, because of assumption \eqref{initial-R1-R2-h*c}, the projected errors satisfy vanishing initial conditions, namely, $\be_{\sigma,h}(0) = \mathbf 0$, $e_{p,h}(0) = 0$, $e_{\theta,h}(0) = 0$ and $(\underline{\be}_{u,h}(0), \underline{\be} _{\hat u,h}(0)) = (\mathbf 0, \mathbf 0)$. Hence, integrating over $t\in (0, T]$ and using again the Cauchy-Schwarz inequality we deduce that
\begin{align*}
 		&\norm{ \underline{\be}_{u,h}}^2_{\cH_1} + \norm{\boldsymbol{\mathcal{E}}_{\Theta,h}}^2_{\cH_2} + \int_0^T \inner{\mathfrak I \underline{\be}_{u,h}(s), \underline{\be}_{u,h}(s)}_\Omega \,\text{d}s+ \int_0^T \norm{\tfrac{k+1}{h_\cF^{\sfrac{1}{2}}}(\underline{\be}_{u,h} - \underline{\be}_{\hat{u}, h} )}^2_{0,\partial \cT_h}\,\text{d}s
 		\\
 		&\qquad \lesssim \Big(\int_0^T \Big(\norm{\tfrac{h_\cF^{\sfrac{1}{2}}}{k+1} (\bchi_\sigma - (\alpha \chi_p + \beta \chi_\theta) I_d)\bn}^2_{0,\partial \cT_h} + \norm{\tfrac{h_\cF^{\sfrac{1}{2}}}{k+1} \chi_p \bn}^2_{0,\partial \cT_h} + \norm{\tfrac{h_\cF^{\sfrac{1}{2}}}{k+1} \chi_\theta \bn}^2_{0,\partial \cT_h} \Big)  \text{d}t\Big)^{\sfrac{1}{2}}
		\\ & \qquad \qquad \qquad \qquad \qquad \qquad \qquad \qquad \qquad \qquad \qquad \qquad
		\times
 		\Big( \int_0^T \norm{\tfrac{k+1}{h_\cF^{\sfrac{1}{2}}} (\underline{\be}_{u,h} - \underline{\be}_{\hat{u},h})  }^2_{0,\partial \cT_h}\ \text{d}t\Big)^{\sfrac{1}{2}}
 		\\
 		&\qquad \qquad + \Big(\int_{0}^{T}\norm{\boldsymbol{\mathcal{E}}_{\Theta,h}}^2_{\cH_2} \text{d}t\Big)^{\sfrac12} \Big( \int_0^T \abs{ (\underline{\bchi}_u,\underline{\bchi}_{\hat u})}^2_{\mathcal{U} \times \hat{\mathcal{U}}}  \text{d}t \Big)^{\sfrac12},\quad \forall t\in (0, T].
\end{align*}
Finally, a simple application of Young's inequality yields
\begin{align*}
 		 &\max_{[0, T]}\norm{ \underline{\be}_{u,h}}^2_{\cH_1} + \max_{[0, T]} \norm{\boldsymbol{\mathcal{E}}_{\Theta,h}}^2_{\cH_2} + \int_0^T \inner{\mathfrak I \underline{\be}_{u,h}(s), \underline{\be}_{u,h}(s)}_\Omega \,\text{d}s+ \int_0^T \norm{\tfrac{k+1}{h_\cF^{\sfrac{1}{2}}}(\underline{\be}_{u,h} - \underline{\be}_{\hat{u}, h} )}^2_{0,\partial \cT_h}\,\text{d}s
 		\\
 		& \lesssim \int_0^T \Big(\norm{\tfrac{h_\cF^{\sfrac{1}{2}}}{k+1} (\bchi_\sigma - (\alpha \chi_p + \beta \chi_\theta) I_d)\bn}^2_{0,\partial \cT_h} + \norm{\tfrac{h_\cF^{\sfrac{1}{2}}}{k+1} \chi_p \bn}^2_{0,\partial \cT_h} + \norm{\tfrac{h_\cF^{\sfrac{1}{2}}}{k+1} \chi_\theta \bn}^2_{0,\partial \cT_h} + \abs{ (\underline{\bchi}_u,\underline{\bchi}_{\hat u})}^2_{\mathcal{U} \times \hat{\mathcal{U}}} \Big) \, \text{d}t,
\end{align*}
and the result follows.
\end{proof}

As a consequence of the stability estimate \eqref{stab}, we immediately have the following convergence result for the HDG method \eqref{sd}-\eqref{initial-R1-R2-h*c}.
\begin{theorem}\label{hpConv}
Let
\[
\underline{\bu} = \stack*{\bu \mid \bq \mid \br} \in \mathcal{C}_{[0,T]}^1(\cH_1) \cap \mathcal{C}^0_{[0,T]}(\cX_1)\quad \text{and} \quad \boldsymbol{\Theta} = (\bsig, \stack{p \sep \theta}) \in \mathcal{C}^1_{[0,T]}(\cH_2) \cap \mathcal{C}^0_{[0,T]}(\cX_2)
\]
be the solutions of \eqref{eq:weakform} with appropriate initial conditions. Assume that $\bsig \in \cC^0_{[0,T]}(H^{1+r}(\Omega, \mathbb{R}^{d\times d}_{\text{sym}}))$, $p \in \cC^0_{[0,T]}(H^{1+r}(\Omega))$, $\theta \in \cC^0_{[0,T]}(H^{1+r}(\Omega))$, and $\underline{\bu} \in \cC^0_{[0,T]}(H^{2+r}( \Omega, \bbR^{d \times 3}))$, with $r\geq 0$. Then, there exists a constant $C>0$ independent of $h$ and $k$ such that
\begin{align*}
		 &\max_{[0, T]}\norm{ (\underline{\bu} - \underline{\bu}_h)(t)}_{\cH_1} + \max_{[0, T]} \norm{(\boldsymbol{\Theta} - \boldsymbol{\Theta}_h)(t)}_{\cH_2} + \left(  \int_0^T \norm{\tfrac{k+1}{h_\cF^{\sfrac{1}{2}}}(\underline{\bu}_h - \underline{\hat{\bu}}_{h} )}^2_{0,\partial \cT_h}\,\text{d}s \right)^{\sfrac{1}{2}}
		\\
		&\quad \leq  C \tfrac{h_K^{\min\{ r, k \}+1}}{(k+1)^{r+\sfrac12}} \Big( \max_{[0,T]}\norm{\underline{\bu} }_{2+r,\Omega} + \max_{[0, T]}\norm*{\bsig}_{1+r, \Omega} + \max_{[0, T]}\norm*{p}_{1+r, \Omega} + \max_{[0, T]}\norm*{\theta}_{1+r, \Omega} \Big)\quad \forall k\geq 0.
\end{align*}
\end{theorem}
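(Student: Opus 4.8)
The plan is to read the claim off the stability estimate \eqref{stab} of Lemma~\ref{stab_sd} by splitting each total error into a projection error plus a projected error, and then invoking the $hp$-approximation properties of the $L^2$-projectors $\Pi^{k+1}_\cT$, $\Pi^k_\cT$, $\Pi^{k+1}_\cF$ collected in the Appendix. First I would check that the regularity hypotheses suffice: since $\bsig,p,\theta\in\cC^0_{[0,T]}(H^{1+r})$ and $\underline{\bu}\in\cC^0_{[0,T]}(H^{2+r})$ with $r\ge0$ give broken regularity of order $s\ge1>\sfrac12$, the assumptions of Proposition~\ref{consistency}, and hence of Lemma~\ref{stab_sd}, hold, so the projected errors $\underline{\be}_{u,h}$, $\underline{\be}_{\hat u,h}$, $\boldsymbol{\mathcal{E}}_{\Theta,h}$ satisfy \eqref{stab}.

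Next I would write $\underline{\bu}-\underline{\bu}_h=\underline{\bchi}_u+\underline{\be}_{u,h}$, $\boldsymbol{\Theta}-\boldsymbol{\Theta}_h=\boldsymbol{\mathcal{X}}_\Theta+\boldsymbol{\mathcal{E}}_{\Theta,h}$, and for the skeleton term use $\underline{\bu}_h-\underline{\hat{\bu}}_h=(\underline{\bchi}_{\hat u}-\underline{\bchi}_u)-(\underline{\be}_{u,h}-\underline{\be}_{\hat u,h})$. By the triangle inequality and the norm equivalences \eqref{bound:rho} and \eqref{normH}, the left-hand side of the claim is controlled by the three projected-error quantities on the left of \eqref{stab} (dropping the nonnegative $\mathfrak I$-term) plus the projection errors $\norm{\underline{\bchi}_u}_{0,\Omega}$, $\norm{\bchi_\sigma}_{0,\Omega}+\norm{\chi_p}_{0,\Omega}+\norm{\chi_\theta}_{0,\Omega}$, and $\norm{\tfrac{k+1}{h_\cF^{1/2}}(\underline{\bchi}_u-\underline{\bchi}_{\hat u})}_{0,\partial\cT_h}$. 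Taking square roots in \eqref{stab} bounds the projected part by the right-hand side of that inequality, so everything reduces to estimating the approximation quantities
\[
\abs{(\underline{\bchi}_u,\underline{\bchi}_{\hat u})}_{\mathcal{U}\times\hat{\mathcal{U}}},\quad \norm{\tfrac{h_\cF^{1/2}}{k+1}\bchi_\sigma}_{0,\partial\cT_h},\quad \norm{\tfrac{h_\cF^{1/2}}{k+1}\chi_p}_{0,\partial\cT_h},\quad \norm{\tfrac{h_\cF^{1/2}}{k+1}\chi_\theta}_{0,\partial\cT_h},
\]
together with the extra volume terms produced by the triangle inequality.

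I would then estimate each piece by the $hp$-projection bounds. Recalling \eqref{norm:sym}, the volume parts $\norm{\beps(\bchi_u)}_{0,\cT_h}$, $\norm{\div\bchi_q}_{0,\cT_h}$, $\norm{\div\bchi_r}_{0,\cT_h}$ are each dominated by a first-order seminorm of a degree-$(k+1)$ projection error and are of order $h_K^{\min\{r,k\}+1}(k+1)^{-(1+r)}$ times the $H^{2+r}$-seminorm of $\underline{\bu}$; for the face-projection contributions I would use that the trace of $\Pi^{k+1}_\cT\underline{\bu}$ on a face is a degree-$(k+1)$ polynomial, so by best approximation $\norm{\underline{\bchi}_{\hat u}}_{0,\partial K}\le\norm{\underline{\bchi}_u}_{0,\partial K}$, reducing the skeleton velocity term to the sharp $hp$-trace estimate for $\Pi^{k+1}_\cT$, which yields precisely $h_K^{\min\{r,k\}+1}(k+1)^{-(r+1/2)}$. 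The stress, pressure and temperature skeleton terms are handled identically with $\Pi^k_\cT$ and the weight $h_\cF^{1/2}/(k+1)$, giving the higher-order $h_K^{\min\{r,k\}+1}(k+1)^{-(r+3/2)}$, while the leftover volume projection errors are of order $h_K^{\min\{r,k\}+2}(k+1)^{-(2+r)}$ for $\underline{\bu}$ and $h_K^{\min\{r,k\}+1}(k+1)^{-(1+r)}$ for $(\bsig,p,\theta)$. All of these are no larger than the asserted bound; integrating in time with $\int_0^T\le T\max_{[0,T]}$ extracts the $\max$-in-time seminorms on the right.

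The binding contribution, and the only place where the exact power of $k+1$ matters, is the skeleton term $\norm{\tfrac{k+1}{h_\cF^{1/2}}(\underline{\bchi}_u-\underline{\bchi}_{\hat u})}_{0,\partial\cT_h}$. Controlling it through the generic trace inequality $\norm{v}_{0,\partial K}^2\lesssim h_K^{-1}\norm{v}_{0,K}^2+h_K|v|_{1,K}^2$ would lose a factor $(k+1)^{1/2}$ and produce only $(k+1)^{-r}$; recovering the claimed $(k+1)^{-(r+1/2)}$ requires the sharp $hp$-trace approximation estimate (the one carrying the extra half power of $k+1$) of the Appendix. Every remaining term has a strictly larger power of $k+1$ in the denominator or a higher power of $h_K$, so this single trace estimate fixes the final rate $h_K^{\min\{r,k\}+1}(k+1)^{-(r+1/2)}$. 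This is the main—and essentially the only nonroutine—step; the rest is bookkeeping with the triangle inequality and standard projection bounds.
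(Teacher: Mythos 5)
Your proposal is correct and follows essentially the same route as the paper: split the total error via the triangle inequality into projection errors and projected errors, control the latter by the stability estimate \eqref{stab} of Lemma~\ref{stab_sd}, and bound the approximation terms with the $hp$-estimates \eqref{tool1} and \eqref{tool2}. Your additional observation that the skeleton term $\norm{\tfrac{k+1}{h_\cF^{1/2}}(\underline{\bchi}_u-\underline{\bchi}_{\hat u})}_{0,\partial\cT_h}$ is the binding contribution responsible for the $(k+1)^{-(r+1/2)}$ rate is accurate and consistent with Lemma~\ref{maintool2}, which the paper invokes without further comment.
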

\begin{proof}
		It follows from the triangle inequality and \eqref{stab} that 
	\begin{align*}
		&\max_{[0, T]}\norm{ (\underline{\bu} - \underline{\bu}_h)(t)}_{\cH_1} 
		+ \max_{[0, T]} \norm{(\bsig - \bsig_h, p - p_h)(t)}_{\cH_2} 
		 + \left( \int_0^T \norm{\tfrac{k+1}{h_\cF^{\sfrac{1}{2}}}
		((\underline{\bu} - \underline{\bu}_h) - ( \underline{\bu} - \underline{\hat{\bu}}_{h}) )}^2_{0,\partial \cT_h}\,\text{d}s \right)^{\sfrac{1}{2}} \\
		&\lesssim \max_{[0, T]}\norm{\underline{\bchi}_u(t)}^2_{\cH_1} 
		+ \max_{[0, T]}\norm{(\bchi_\sigma, \chi_p)(t)}^2_{\cH_2} 
		\\
		&\qquad \qquad + \left( \int_0^T \Big(\norm{\tfrac{h_\cF^{\sfrac{1}{2}}}{k+1} \bchi_\sigma}^2_{0,\partial \cT_h}  
		+ \norm{\tfrac{h_\cF^{\sfrac{1}{2}}}{k+1} \chi_p}^2_{0,\partial \cT_h}  
		+ \abs{ (\underline{\bchi}_u,\underline{\bchi}_{\hat u})}^2_{\mathcal{U} \times \hat{\mathcal{U}}} \Big) \, \text{d}t \right)^{\sfrac{1}{2}},
	\end{align*}
	and the result follows directly from the error estimates \eqref{tool1} and \eqref{tool2}.
\end{proof}

\begin{remark}\label{R1}
	The convergence analysis in Theorem~\ref{hpConv} leads to two main observations:
	\begin{itemize}
	\item The theoretical bound is sub-optimal by a factor \((k+1)^{1/2}\).  This slight deterioration has already been observed in previous works such as Houston et al.~\cite{houston2002}.
	\item The numerical experiments of Subsection~\ref{example1} (Figures~\ref{fig1} and \ref{fig2}) indicate optimal convergence rates for $\bu$, $\bq$, and $\br$  in the $L^2$-norms, suggesting a gap between theoretical predictions and practical performance that will be addressed in future work.
	\end{itemize}
\end{remark}

\section{Numerical results}\label{sec:numresults}

The numerical results presented in this section have been implemented using the finite element library \texttt{Netgen/NGSolve} \cite{schoberl2014c++},  and all simulations were run on a Mac Studio with an Apple M2 Max chip and 32 GB of RAM. 

\subsection{Validation of the convergence rates}\label{example1}
Firstly, we confirm the accuracy of our HDG scheme by analyzing a problem with a manufactured solution. Then, we consider a practical model problem inspired by \cite{antoniettiIMA, morency}.

To confirm the decay of error with respect to the parameters $h$ and $k$,  we employ successive levels of refinement on an unstructured mesh and compare the computed solutions to an exact solution of problem \eqref{eq:TP} given by 
\begin{align}\label{exactSol}
\begin{split}
p(x,y,t) &= \sin(\pi x)\sin(\pi y) \cos(2 \pi t), \quad \theta = \sin(\pi y)\cos(2 \pi t),
\\
\text{and} \quad \bu(x,y,t) &= \begin{pmatrix}
	2 \pi \cos(2 \pi t) \cos(\pi y) \sin(\pi x)
	\\
	2 \pi \cos(2 \pi t) \cos(\pi x) \sin(\pi y)
	\end{pmatrix} \quad \text{in $\Omega\times (0, T]$},
\end{split}
\end{align}
where $\Omega = (0,1)\times (0, 1)$. We assume that the medium characterized by the constitutive law \eqref{eq:TP-e}  is isotropic, namely, the fourth-order elastic tensor $\cC$  is defined by 
\begin{equation}\label{eq:hooke} 
\cC \zeta = 2 \mu\boldsymbol{\zeta} + \lambda \tr(\boldsymbol{\zeta}) I,
\end{equation}
in terms of Lamé coefficients $\mu>0$ and $\lambda>0$. 

We compute the source terms corresponding to the manufactured solution \eqref{exactSol} of \eqref{eq:TP} with the material parameters given by \eqref{L1} or \eqref{L2}. We prescribe non-homogeneous boundary conditions in \eqref{BC} with $\Gamma_D^S
	=\Gamma_D^F =\Gamma_D^H = \Gamma$. 

In our first test, the parameters are chosen as 
\begin{gather}\label{L1}
	\begin{split}
		\rho_S = \rho_F = 1, \quad \phi = 0.5 \quad \nu = 2.0\quad \mu = 50,\quad  \lambda = 100, 
		\\   
		\eta = \kappa =1, \quad \alpha = \beta = 1, \quad a_0 = c_0 = 1, \quad b_0 = 0.5,  
	\\
	 \chi = 1, \quad \tau = 1.
	\end{split}
\end{gather}

We partition the time interval  $[0, T]$ into uniform subintervals of length $\Delta t$. We discretize in time using the Crank-Nicolson method. Owing to its second-order accuracy in time, we choose the time step such that $\Delta t \approx O(h^{(k+2)/2})$. This ensures that the temporal discretization error remains asymptotically smaller than the spatial error, thereby preserving the expected spatial convergence rates. To verify the accuracy of our method, we evaluate the following $L^2$-norm error measures at the final time step $L$:  
\begin{align}\label{Errors1}
	\begin{split}
		\mathtt{e}^{L}_{hk}(\bsig, p, \theta) := \norm*{ \big( \bsig(T) - \bsig_h^L, \stack{p - p_h^L \sep \theta - \theta_h^L} \big) }_{\cH_2}
 \quad 
 \mathtt{e}^L_{hk}(\underline{\bu} )  := \norm{\underline{\bu}(T)  - \underline{\bu}^L_h}_{\cH_1}.
	\end{split}
\end{align}
 
In Figure~\ref{fig1}, we present the errors as functions of the mesh size $h$ for three different polynomial degrees $k$. The $L^2$-errors \eqref{Errors1} are displayed in log-log plots, with the expected rates of convergence represented by dashed lines. The results show that the errors $\mathtt{e}^{L}_{hk}(\bsig, p, \theta)$ and $\mathtt{e}^L_{hk}(\underline{\bu} )$ achieve the optimal convergence rates of $O(h^{k+1})$ and $O(h^{k+2})$, respectively. 

\begin{figure}[!ht]
	\centering
	\includegraphics[width=\textwidth, height=0.2\textheight]{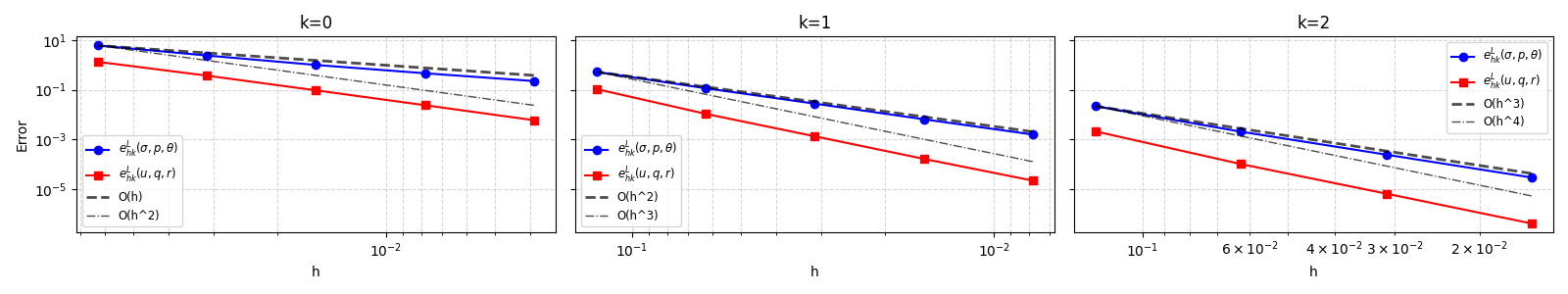}
	\caption{The errors \eqref{Errors1} are plotted against the mesh size h for various polynomial degrees k, using temporal over-refinements. Problem \eqref{eq:TP} is defined with coefficients \eqref{L1} and the exact solution \eqref{exactSol}. }
	\label{fig1}
\end{figure}

To verify the accuracy and stability of the scheme for nearly incompressible poroelastic media with small storage coefficient $s$, we repeat the same experiment using the following set of material parameters:
\begin{gather}\label{L2}
	\begin{split}
		\rho = 1, \quad  \nu_{\cC} = 0.49,\quad   E_{\cC} = 100,\quad \nu_{\cD} = 0.4999,\quad   E_{\cD} = 1000, 
	\\
	\alpha = 1, \quad \beta = 1, \quad \omega=1,\quad \chi = 1, \quad s = 10^{-6}.
	\end{split}
\end{gather}
We recall that the Lamé coefficients are related to the Young's modulus $E$ and Poisson's ratio $\nu$ by the relations $\lambda = \frac{E \nu}{(1+\nu)(1-2\nu)}$ and $\mu = \frac{E}{2(1+\nu)}$.

The error decay for this case is shown in Figure~\ref{fig2}. These results demonstrate the robustness of the proposed HDG scheme in producing accurate approximations for the chosen extreme parameter values. The results are consistent with those obtained in Figure~\ref{fig1} for the coefficients in \eqref{L1}.

\begin{figure}[!ht]
	\centering
	\includegraphics[width=\textwidth, height=0.2\textheight]{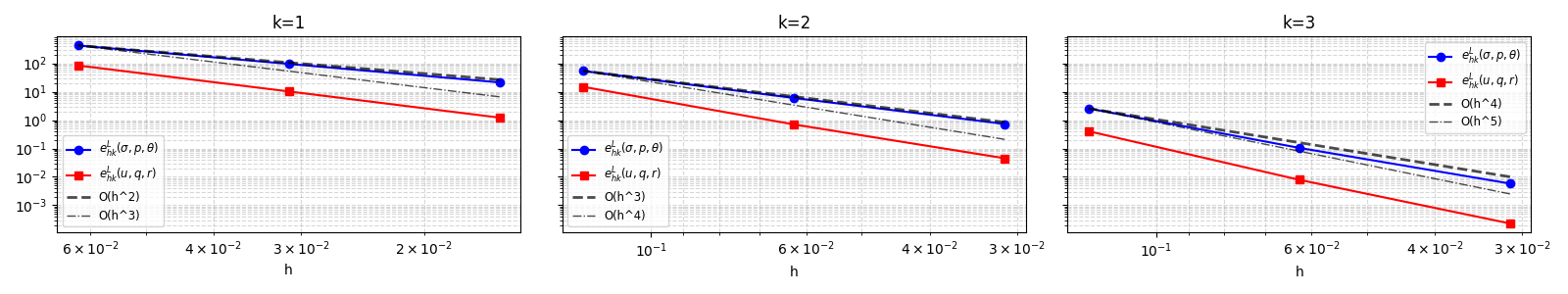}
	\caption{The errors \eqref{Errors1} are plotted against the mesh size h for various polynomial degrees k, using temporal over-refinements. Problem \eqref{eq:TP} is defined with coefficients \eqref{L2} and the exact solution \eqref{exactSol}.}
	\label{fig2}
\end{figure}

Having established the scheme's behavior with respect to the mesh size $h$, we now turn to testing its performance with respect to the parameters $k$ and $\Delta t$.

We fix the space mesh size $h = 1/4$ and the time step $\Delta t = 10^{-6}$ and let the polynomial degree $k$ vary from 1 to 7. In Figure~\ref{T3} we present the errors $\mathtt{e}^{L}_{hk}(\bsig,p)$ and $\mathtt{e}^L_{hk}(\bu, \bp)$ at  $t = 0.3$  plotted against the polynomial degree $k$ on a semi-logarithmic scale. This example uses the same manufactured solution derived from \eqref{exactSol} with material coefficients given in \eqref{L1}. As expected, exponential convergence is observed.

In Figure~\ref{T4}, we show the convergence results obtained by fixing the spatial mesh size at $h=1/16$ and the polynomial degree at $k=3$, while varying the time step $\Delta t$ used to uniformly subdivide the time interval $[0,T]$ with $T=0.5$. The expected convergence rate of $O(\Delta t^2)$ is achieved as the time step is refined.

\begin{table}[!ht]
	\begin{minipage}{0.47\linewidth}
		\centering
  \includegraphics[width=0.9\textwidth]{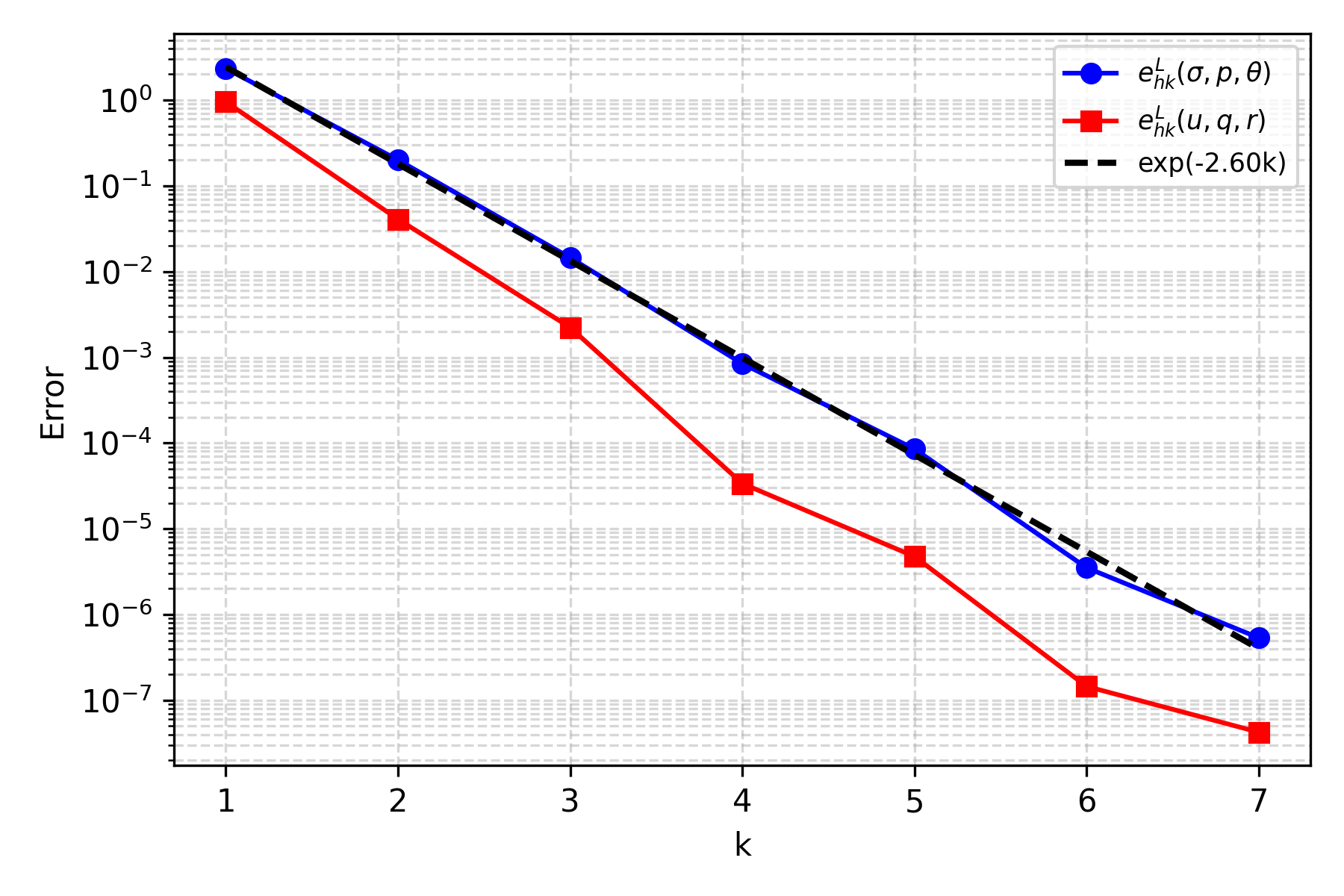}
		\captionof{figure}{Computed errors versus the polynomial degree $k$ with $h=1/4$ and $\Delta t = 10^{-6}$. The errors are measured at $t=0.3$, by employing the coefficients \eqref{L1}.}
		\label{T3}
	  \end{minipage}
	\hfill
	\begin{minipage}{0.47\linewidth}
	  \centering
\includegraphics[width=0.9\textwidth]{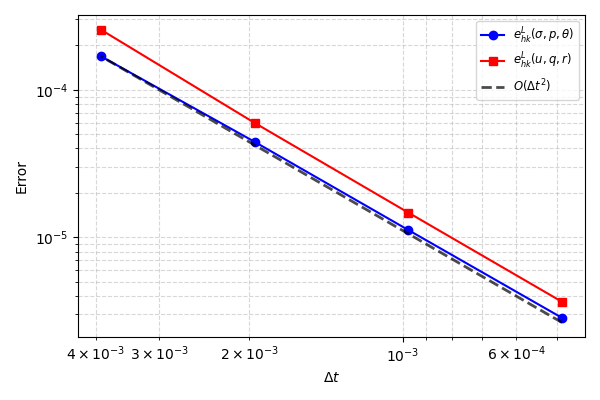}
	  \captionof{figure}{Computed errors for a sequence of uniform refinements in time with $h=1/16$ and $k=3$. The errors are measured at $t=0.5$, with the coefficients \eqref{L1}. }
	  \label{T4}
	\end{minipage}
  \end{table}

\subsection{Wave propagation in a thermoelastic medium}
 
\begin{figure}[!ht]
	\begin{center}
	\includegraphics[scale=0.37]{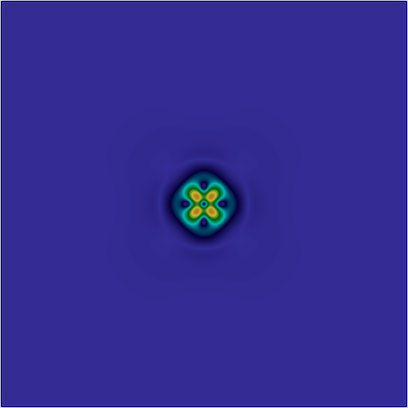}
	\includegraphics[scale=0.37]{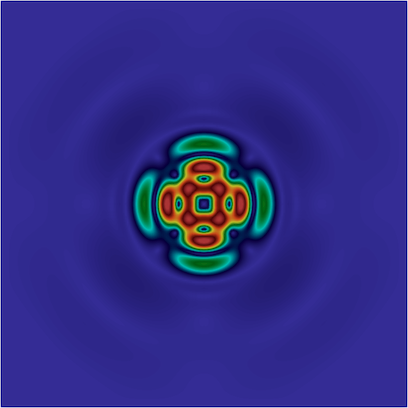}
	\includegraphics[scale=0.37]{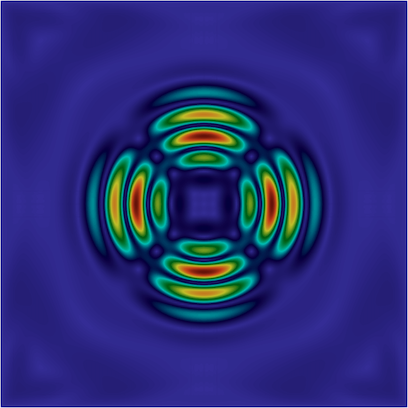}
	\\
	\includegraphics[width=0.5\textwidth]{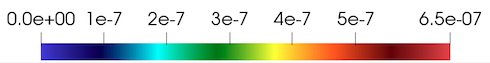}
	\\
	\includegraphics[scale=0.37]{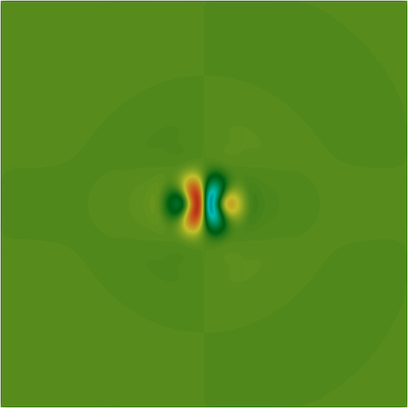}
	\includegraphics[scale=0.37]{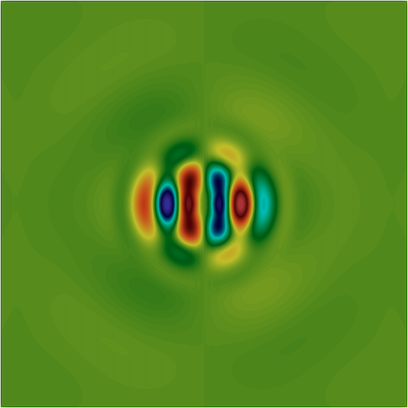}
	\includegraphics[scale=0.37]{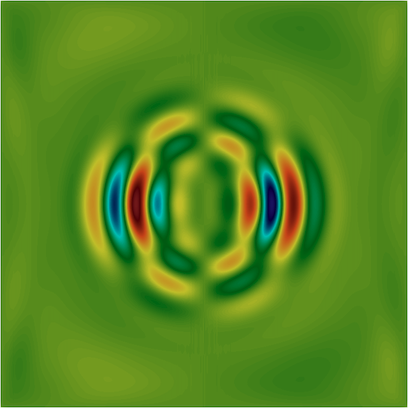}
	\\
	\includegraphics[width=0.5\textwidth]{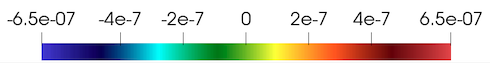}
	\end{center}
	\vspace{-4mm}
	\caption{
	Snapshots of the  solid velocity magnitude $|\bu|$  (top row) and its $y$-component $u_2$ (bottom row) at times 0.2\,\unit{s}, 0.4\,\unit{s}, and 0.6\,\unit{s} (left to right panels). Problem \eqref{eq:TP} is solved using the source term \eqref{Fshear}, with vanishing initial and Dirichlet boundary conditions, and parameter set \eqref{L3}, with $h=50$, $k=7$, and $\Delta t = 10^{-2}$.}
	\label{fig:veloS}
	\end{figure}

	\begin{figure}[!ht]
	\begin{center}
	\includegraphics[scale=0.37]{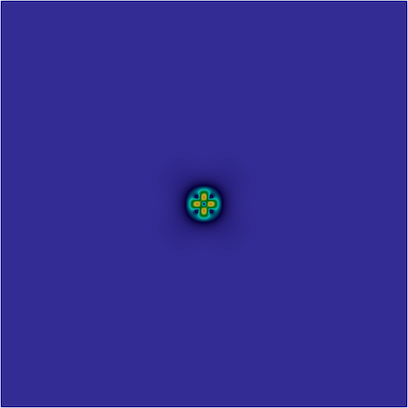}
	\includegraphics[scale=0.37]{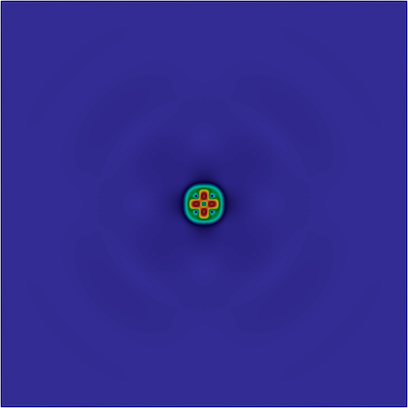}
	\includegraphics[scale=0.37]{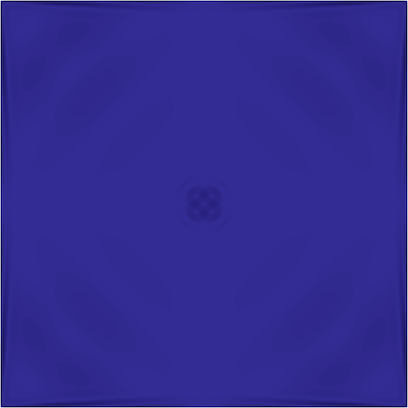}
	\\
	\includegraphics[width=0.5\textwidth]{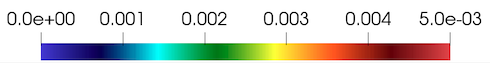}
	\\
	\includegraphics[scale=0.37]{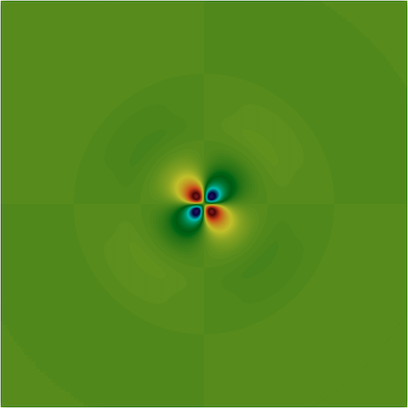}
	\includegraphics[scale=0.37]{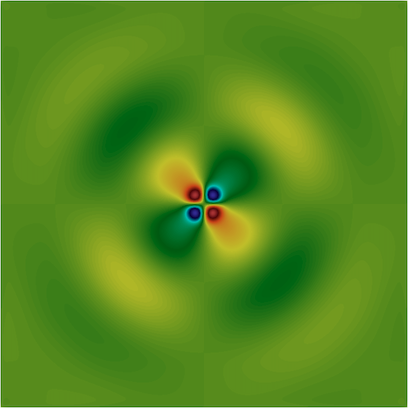}
	\includegraphics[scale=0.37]{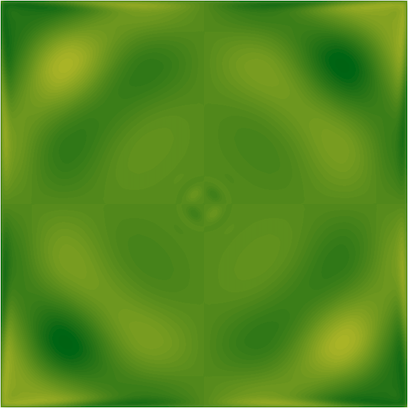}
	\\
	\includegraphics[width=0.5\textwidth]{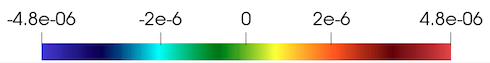}
	\end{center}
	\vspace{-4mm}
	\caption{
	Snapshots of the heat flux magnitude $|\br|$  (top row) and the  temperature $\theta$ (bottom row) at times 0.2\,\unit{s}, 0.4\,\unit{s}, and 0.6\,\unit{s} (left to right panels). Problem \eqref{eq:TP} is solved using the source term \eqref{Fshear}, with vanishing initial and Dirichlet boundary conditions, and parameter set \eqref{L3}, with $h=50$, $k=7$, and $\Delta t = 10^{-2}$.}
	\label{fig:veloT}
	\end{figure}

In this test, we reproduce the numerical simulation from \cite{bonetti2023numerical} of thermal-wave propagation in a homogeneous, isotropic thermoelastic medium. We consider a wave propagation problem in a computational domain $\Omega = (0, 1500)\times (0, 1500)\, \unit{m^2}$ containing an explosive source positioned at $\boldsymbol{x}_s = (750,750) \, \unit{m}$. 

The spatial source function is regularized using a Gaussian approximation to the Dirac delta function:
\[
  \boldsymbol{b}(x,y) = \frac{1}{\epsilon^2}
        \exp\left(-\frac{\norm{\bx - \bx_s}^2}{2\epsilon^2}\right) (\bx - \bx_s),
\]
where $\epsilon = h/3$ provides mesh-dependent smoothing that ensures the regularized delta maintains sufficient support across multiple elements while preserving numerical stability. The forcing term is formulated as:
\begin{equation}\label{Fshear}
    \bF_s =  S(t)\begin{pmatrix}
	0 & 1 \\ 1 & 0
	\end{pmatrix} \boldsymbol{b}(x,y),
\end{equation} 
where $S(t) = A_0 \cos(2\pi f_0(t - t_0)) \exp(-2f_0^2(t - t_0)^2)$ models a compressive seismic pulse centered at $t_0 = 0.3$ s, with amplitude $A_0 = 10$ and peak-frequency $f_0 = 5$ \unit{Hz}. This configuration generates a shear source centered at the center of the square domain.

The physical parameters are set according to \eqref{L3}, representing a typical geophysical medium with fluid-saturated porosity $\phi = 0.3$ and Maxwell-Cattaneo relaxation time $\tau = 1.5 \times 10^{-2}\,\unit{s}$ ensuring finite-speed thermal wave propagation.

\begin{gather}\label{L3}
    \begin{split}
		\rho_F = 1000\,\unit{kg/m^3},\quad \rho_S = 2650\,\unit{kg/m^3},\quad \phi = 0.3 , \quad\nu = 2.0,
    \\
        \mu = 1.885 \cdot 10^9\,\unit{Pa},\quad \lambda = 4.433 \cdot 10^8\,\unit{Pa},
    \\
    \tau = 1.5\cdot 10^{-2}\,\unit{s}, \quad \chi = 1.5\cdot 10^{4}\,\unit{m^2 Pa/K^2 s},\quad \frac{\eta}{\kappa} = 10^{9}\,\unit{m^2 /Pa},
    \\
	c_0 = 4.1695\,\unit{Pa/K^2}, \quad b_0 = 1.3684 \cdot 10^{-5}\,\unit{K^{-1}}, \quad a_0 = 1.3684 \cdot 10^{-10}\,\unit{Pa^{-1}},
    \\
    \alpha = 0.7143\quad \text{and}, \quad \beta = 4.8571 \cdot 10^{4}\,\unit{Pa/K}.
    \end{split}
\end{gather}

Figure~\ref{fig:veloS} illustrates the temporal evolution of the solid velocity magnitude $|\mathbf{u}|$ and its $y$-component $u_2$ at times $t = 0.2$, $0.4$, and $0.6$~\unit{s}. The applied punctual shear force, represented by an antisymmetric moment tensor, generates the characteristic quadrupolar radiation pattern observed in elastic media. The symmetry axes orient along the domain diagonals, directly reflecting the antisymmetric structure of the applied moment tensor source. This cloverleaf-like distribution surrounding the source location represents the expected theoretical behavior for shear-dominated deformation in homogeneous thermo-poroelastic media.

The temporal evolution of the velocity magnitude (Figure~\ref{fig:veloS}, top row) reveals the complex wave dynamics inherent to the coupled thermo-poroelastic system. At the initial time $t = 0.2$~\unit{s}, the quadrupolar pattern remains concentrated near the source, showing the characteristic symmetry of shear wave radiation. As time progresses to $t = 0.4$~\unit{s}, the expanding wavefront begins to exhibit different wave types: the outer, rapidly-advancing front corresponds to the fast compressional ($P$-) wave, while the inner, more slowly-propagating front represents the shear ($S$-) wave. By $t = 0.6$~\unit{s}, the fast $P$-wave reaches the domain boundaries, producing observable corner reflections characteristic of finite domains with Dirichlet boundary conditions. The absence of a clearly visible slow P-wave (Biot wave) in our results is consistent with theoretical expectations for viscous fluid regimes, where this mode typically exhibits extremely low amplitude and strong attenuation.

The $y$-component of velocity (Figure~\ref{fig:veloS}, bottom row) provides enhanced visualization of the shear wave's directional characteristics. The antisymmetric distribution with respect to the $y$-axis clearly demonstrates the dipolar nature of shear wave motion, confirming the theoretical expectation for moment tensor sources. 

Figure~\ref{fig:veloT} illustrates the temporal evolution of the heat flux magnitude $|\br|$ and temperature $\theta$ fields at identical time instants. The heat flux distribution (top row) displays a quadrupolar radiation pattern similar to the mechanical velocity fields, but with distinctly different propagation characteristics and amplitude scaling. The initial quadrupolar structure at $t = 0.2$~\unit{s} gradually evolves and disperses as thermal diffusion becomes dominant.

The temperature field evolution (bottom row of Figure~\ref{fig:veloT}) reveals the presence of the thermal ($T$-) wave, which exhibits  different propagation behavior compared to the elastic wave modes. The thermal wave propagates significantly more slowly than both the compressional and shear waves due to the diffusive nature of heat transport. The temperature distribution remains more spatially concentrated around the source region throughout the simulation, reflecting the lower thermal diffusivity relative to the elastic wave speeds. 

A qualitative comparison with the results in \cite[Figures 6-7]{bonetti2023numerical}  reveals excellent agreement between the HDG solution and the reference PolyDG method. The wavefront shapes and amplitudes, as well as the appearance of shear and compressional modes, are faithfully reproduced.

\subsection{Comparison of poroelastic and thermo-poroelastic wave propagation}

\begin{figure}[t!]
\begin{center}
\includegraphics[width=0.32\textwidth]{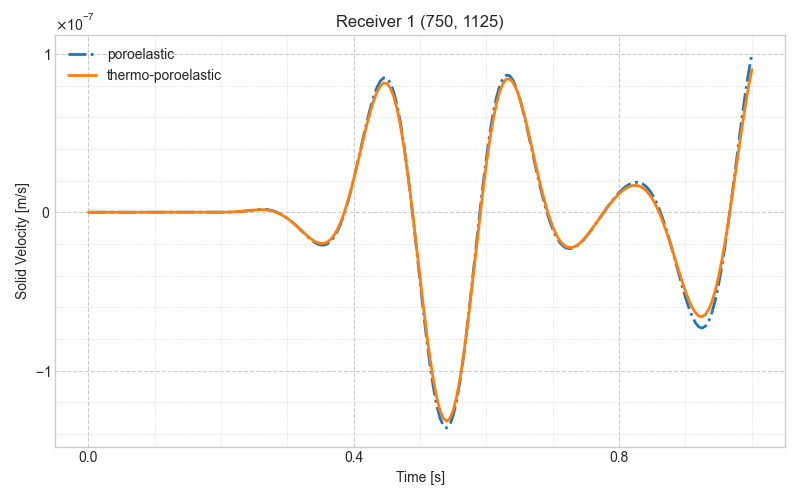}
\includegraphics[width=0.32\textwidth]{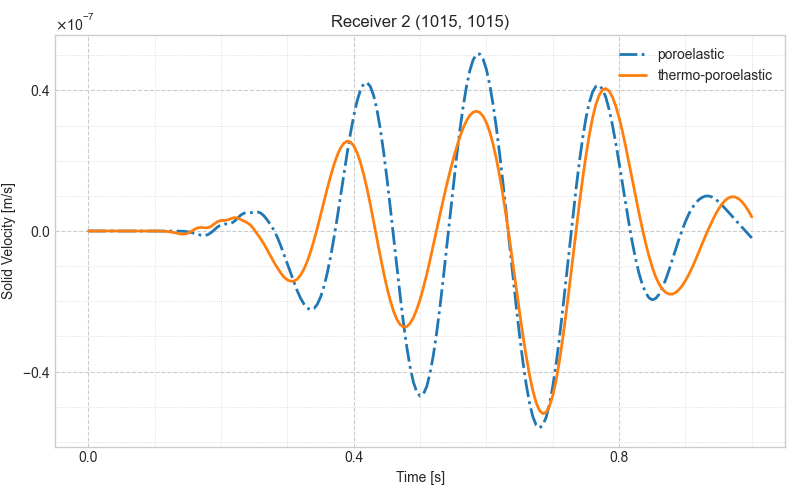}
\includegraphics[width=0.32\textwidth]{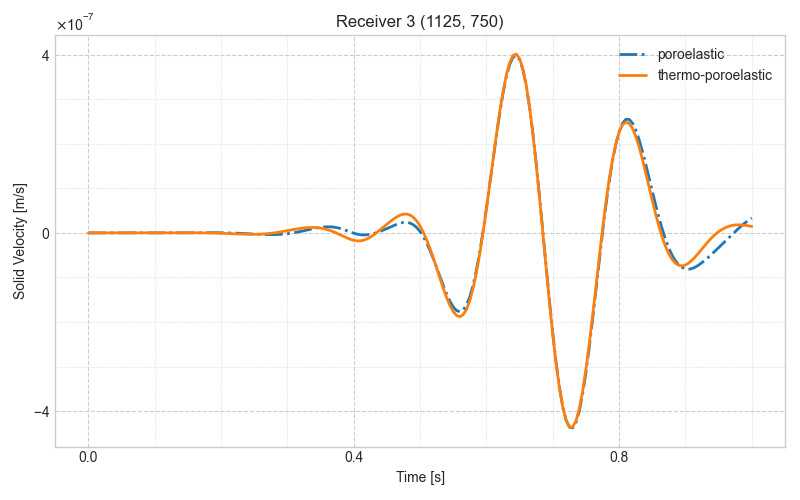}
\end{center}
\vspace{-4mm}
\caption{
The figure illustrates transient behaviour of the $y$-components of solid velocity $\bu$ at the source receivers   $\boldsymbol{x}_{r1} = (750, 1125)$, $\boldsymbol{x}_{r2} = (1015, 1015)$ $\boldsymbol{x}_{r3} = (1125, 750)$ \unit{m} (left to right). We compare the thermo-poroelastic problem \eqref{eq:TP} shown with continuous lines to a purely poroelastic model problem represented by dashed lines.  We use the source term \eqref{Fshear}, and the parameter set \eqref{L3}, with $h=50$, $k=5$, and $\Delta t = 0.005$.}
\label{fig:heteroS}
\end{figure}

\begin{figure}[t!]
\begin{center}
\includegraphics[width=0.32\textwidth]{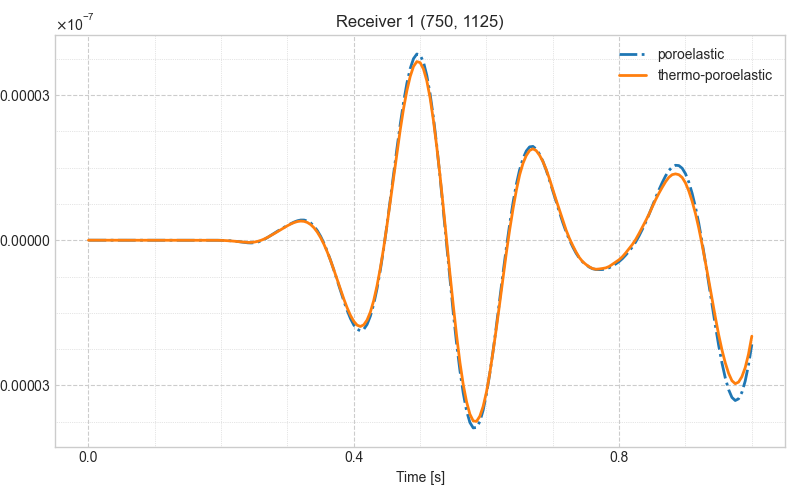}
\includegraphics[width=0.32\textwidth]{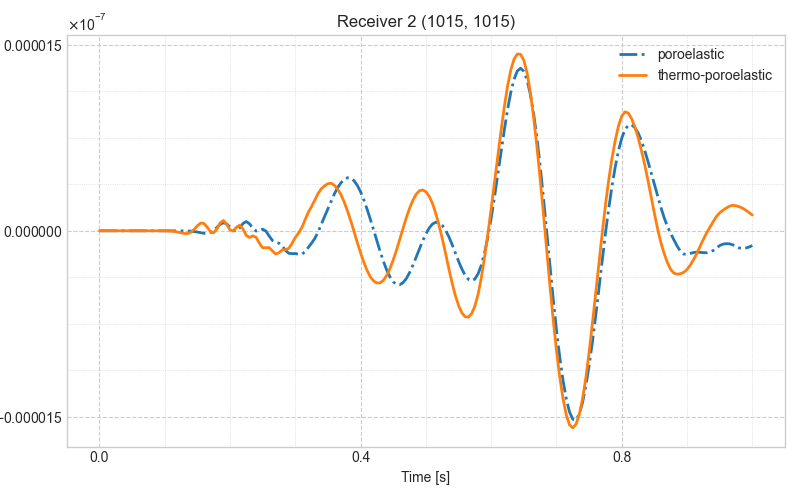}
\includegraphics[width=0.32\textwidth]{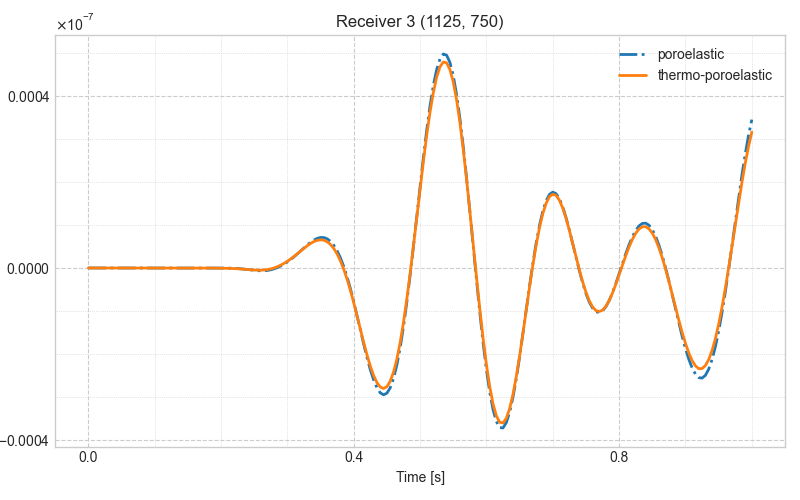}
\end{center}
\vspace{-4mm}
\caption{
The figure illustrates transient behaviour of the $y$-components of filtration velocity $\bq$ at the source receivers   $\boldsymbol{x}_{r1} = (750, 1125)$, $\boldsymbol{x}_{r2} = (1015, 1015)$ $\boldsymbol{x}_{r3} = (1125, 750)$ \unit{m} (left to right). We compare the thermo-poroelastic problem \eqref{eq:TP} shown with continuous lines to a purely poroelastic model problem represented by dashed lines.  We use the source term \eqref{Fshear}, and the parameter set \eqref{L3}, with $h=50$, $k=5$, and $\Delta t = 0.005$.}
\label{fig:heteroF}
\end{figure}

Figure~\ref{fig:heteroS} presents a comparative analysis of the temporal evolution of the $y$-components of solid velocity $u_2$ and filtration velocity $q_2$ at three receivers: $\mathbf{x}_{r1} = (750, 1125)$~m, $\mathbf{x}_{r2} = (1015, 1015)$~m, and $\mathbf{x}_{r3} = (1125, 750)$~m, corresponding to cardinal and diagonal directions from the source. The comparison contrasts the fully-coupled thermo-poroelastic model (continuous lines) with a purely poroelastic model neglecting thermal effects (dashed lines).

The solid velocity response reveals fundamentally different behavior depending on the receiver orientation relative to the source. At cardinal receiver locations ($\mathbf{x}_{r1}$ and $\mathbf{x}_{r3}$), positioned along the vertical and horizontal directions respectively, both models demonstrate reasonable agreement with only slight amplitude variations. The waveforms maintain similar overall characteristics, suggesting that thermal effects have limited influence on wave propagation in these preferential directions where compressional wave modes dominate.

In contrast, the diagonal receiver location ($\mathbf{x}_{r2}$) exhibits clear discrepancies between the two models. Compared to the purely poroelastic response, the thermo-poroelastic formulation yields different wave amplitudes and notable phase shifts. These enhanced coupling effects reflect the complex interaction between thermal and mechanical processes in regions where shear wave propagation dominates. 

The filtration velocity response displays in Figure~\ref{fig:heteroF} analogous features, though its amplitudes are several orders of magnitude lower than those observed in the solid velocity field.

The comparative analysis reveals that thermal effects exert significant influence on shear wave characterization. While purely poroelastic models provide adequate approximations for compressional wave behavior in cardinal directions, accurate prediction of complete wave field behavior in fluid-saturated thermoporoelastic media necessitates the inclusion of thermal coupling.

\subsection{Wave propagation in heterogeneous thermo-poroelastic media}

This subsection investigates wave propagation phenomena in a heterogeneous thermo-poroelastic medium to assess the method's capability in handling realistic geological configurations with spatially varying material properties. The computational domain $\Omega = (0, 1500) \times (0, 1500)$~m$^2$ is divided into two distinct regions along the vertical centerline: the left half ($x < 750$~m) retains the baseline material parameters from~\eqref{L3}, while the right half ($x \geq 750$~m) is characterized by modified thermo-poroelastic properties representing a stiffer, less compliant medium:
\begin{gather}\label{L4}
    \begin{split}
        \mu = 9 \times 10^9\,\unit{Pa}, \quad \lambda = 4 \times 10^9\,\unit{Pa}, \\
        c_0 = 4.1017\,\unit{Pa/K^2}, \quad b_0 = 1.3684 \times 10^{-5}\,\unit{K^{-1}}, \quad a_0 = 1.3684 \times 10^{-10}\,\unit{Pa^{-1}}, \\
        \alpha = 0.9514, \quad \beta = 2.4857 \times 10^{4}\,\unit{Pa/K}.
    \end{split}
\end{gather}

This heterogeneous configuration introduces several complex wave phenomena absent in homogeneous media. Figures~\ref{fig:veloSj} and~\ref{fig:veloTj} illustrate the temporal evolution of the mechanical and thermal fields at representative time instants $t = 0.1$, $0.3$, and $0.5$~s. The solid velocity magnitude $|\mathbf{u}|$ and its $y$-component $u_2$ (Figure~\ref{fig:veloSj}) demonstrate how the material interface disrupts the symmetric wave patterns observed in the homogeneous case. The wavefront interaction with the vertical interface generates complex reflection and transmission patterns, with clearly visible head waves propagating along the boundary between the two media.

The thermal field evolution (Figure~\ref{fig:veloTj}) also reveals pronounced sensitivity to material heterogeneity. The heat flux magnitude $|\mathbf{r}|$ and temperature $\theta$ distributions exhibit sharp discontinuities and enhanced boundary effects. The contrasting coupling coefficients $\beta$ between the two materials create preferential thermal pathways that channel thermal energy along the interface, resulting in pronounced boundary layer effects that persist throughout the simulation duration.

These results validate the robustness and accuracy of the proposed HDG method in handling complex heterogeneous configurations while simultaneously highlighting the  importance of thermal coupling in realistic geophysical applications. 

\begin{figure}[!ht]
	\begin{center}
	\includegraphics[scale=0.37]{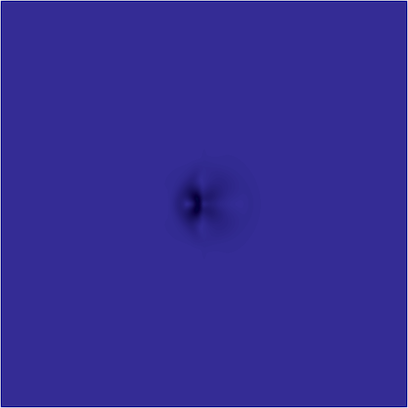}
	\includegraphics[scale=0.37]{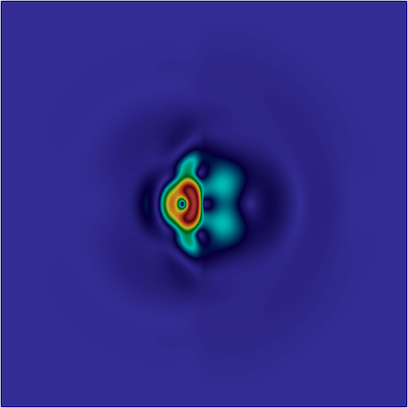}
	\includegraphics[scale=0.37]{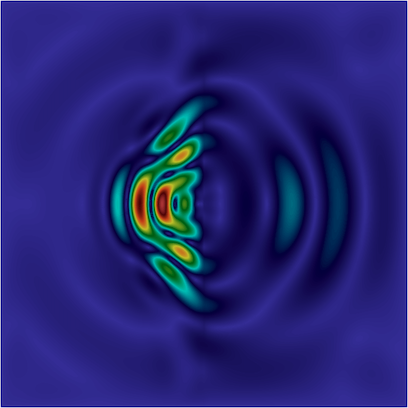}
	\\
	\includegraphics[width=0.5\textwidth]{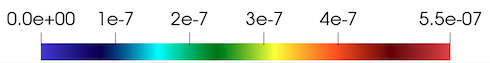}
	\\
	\includegraphics[scale=0.37]{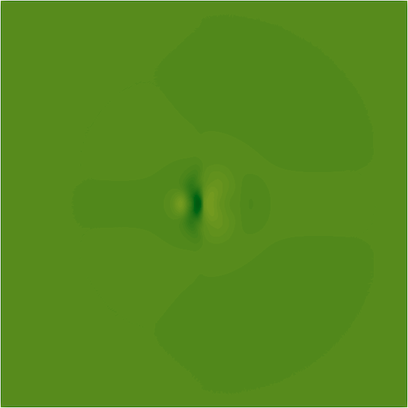}
	\includegraphics[scale=0.37]{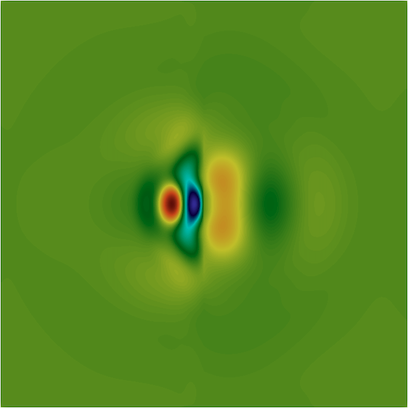}
	\includegraphics[scale=0.37]{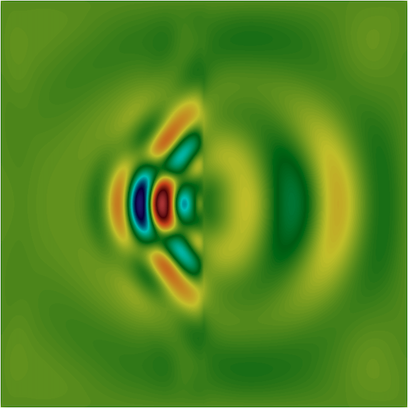}
	\\
	\includegraphics[width=0.5\textwidth]{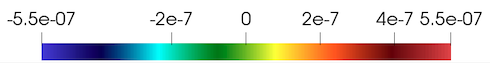}
	\end{center}
	\vspace{-4mm}
	\caption{Snapshots of the solid velocity magnitude $|\bu|$ (top row) and its $y$-component $u_2$ (bottom row) at times $t = 0.1$, $0.3$, and $0.5$~s (left to right panels) for the heterogeneous thermo-poroelastic problem. The computational domain features contrasting material properties with parameter set~\eqref{L3} in the left half ($x < 750$~m) and parameter set~\eqref{L4} in the right half ($x \geq 750$~m). The simulation employs the shear source term~\eqref{Fshear} with vanishing initial and homogeneous Dirichlet boundary conditions, discretized using $h = 50$~m, polynomial degree $k = 7$, and time step $\Delta t = 10^{-2}$~s.
}
	\label{fig:veloSj}
	\end{figure}

	\begin{figure}[!ht]
	\begin{center}
	\includegraphics[scale=0.35]{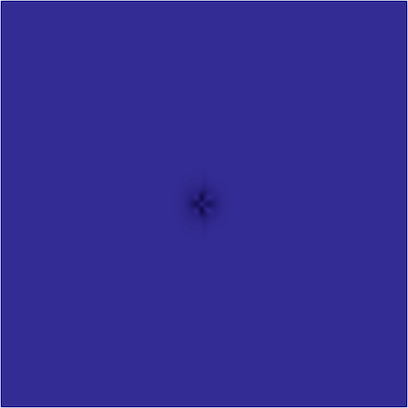}
	\includegraphics[scale=0.35]{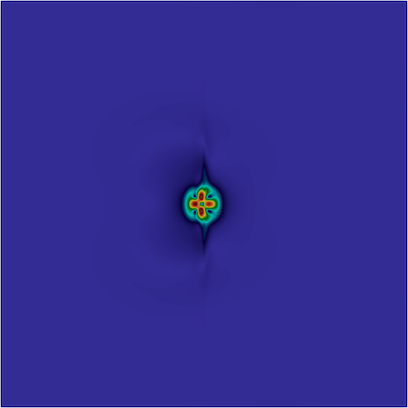}
	\includegraphics[scale=0.35]{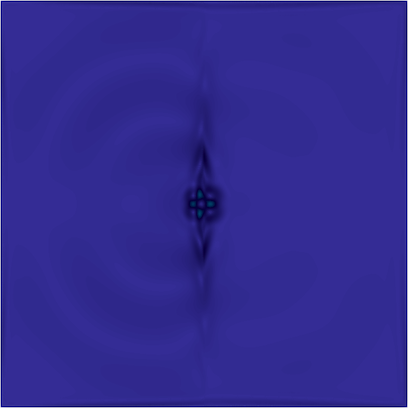}
	\\
	\includegraphics[width=0.5\textwidth]{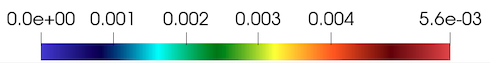}
	\\
	\includegraphics[scale=0.35]{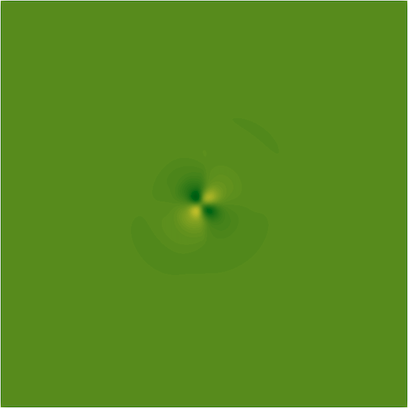}
	\includegraphics[scale=0.35]{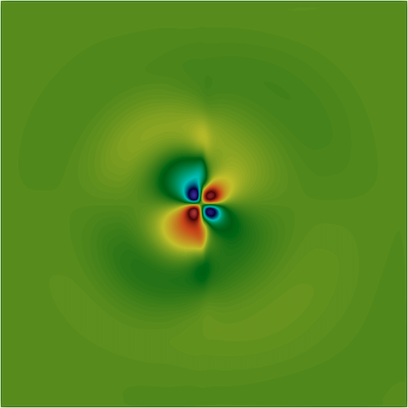}
	\includegraphics[scale=0.35]{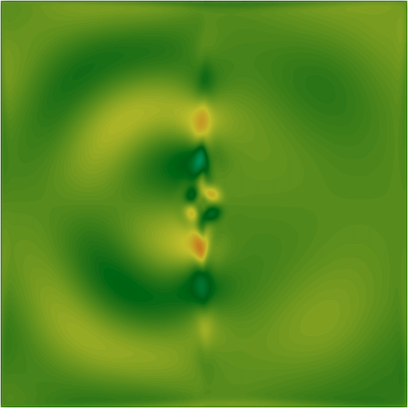}
	\\
	\includegraphics[width=0.5\textwidth]{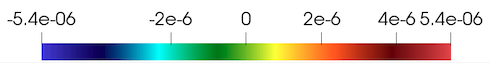}
	\end{center}
	\vspace{-4mm}
	\caption{
		Snapshots of the heat flux magnitude $|\br|$ (top row) and and the  temperature $\theta$ (bottom row) at times 0.1\,\unit{s}, 0.3\,\unit{s}, and 0.5\,\unit{s} (left to right panels) for the heterogeneous thermo-poroelastic problem. The computational domain features contrasting material properties with parameter set~\eqref{L3} in the left half ($x < 750$~m) and parameter set~\eqref{L4} in the right half ($x \geq 750$~m). The simulation employs the shear source term~\eqref{Fshear} with vanishing initial and homogeneous Dirichlet boundary conditions, discretized using $h = 50$~m, polynomial degree $k = 7$, and time step $\Delta t = 10^{-2}$~\unit{s}.
	}
	\label{fig:veloTj}
	\end{figure}

\section{Conclusions}\label{sec:conclusions}
This study presented a high-order hybridizable discontinuous Galerkin (HDG) formulation for fully dynamic thermo-poroelasticity. We established the well-posedness of the continuous problem using semigroup theory and demonstrated the method's optimal $hp$-error estimates. Numerical experiments, including manufactured solution tests and wave propagation benchmarks in heterogeneous media, confirmed the predicted convergence rates and validated the scheme's accuracy and efficiency in capturing coupled phenomena.

This HDG framework provides a robust and computationally effective tool for multiphysics simulations in geomechanics, biomechanics, and materials science. It accurately models wave propagation, including $P$-, $S$-, and thermal waves, making it valuable for predictive simulations.

Future work will explore extensions to non-linear fully-coupled thermo-poroelastic problems, specifically considering systems that include a non-linear convective transport term in the energy equation.

\appendix

\section{Finite Element Approximation Properties}\label{appendix}

This appendix recalls well-established $hp$-approximation properties, adapted to align with the specific notations and requirements of this work.

We begin with the following discrete trace inequality.
\begin{lemma}\label{TraceDG}
	There exists a constant $C>0$ independent of $h$ and $k$ such that
\begin{equation}\label{discTrace}
		 \norm{\tfrac{h^{\sfrac{1}{2}}_{\cF}}{k+1} q}_{0,\partial \cT_h} \leq C \norm*{ q}_{0, \cT_h}\quad \forall q \in \cP_{k}(\cT_h).
\end{equation}
\end{lemma}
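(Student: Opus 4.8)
The plan is to reduce the global estimate \eqref{discTrace} to a single elementwise polynomial trace inequality carrying the \emph{explicit} polynomial-degree dependence, and then to sum over the mesh using local quasi-uniformity. The cornerstone is the sharp $hp$-trace inequality for polynomials on a reference element: for every $\hat q \in \cP_k(\hat K)$ one has $\norm{\hat q}^2_{0,\partial \hat K} \le C\,(k+1)^2\,\norm{\hat q}^2_{0,\hat K}$, where $C$ depends only on the shape of $\hat K$ and on $d$. This is the essential ingredient, and it is precisely what produces the factor $(k+1)^2$; it cannot be extracted from the continuous trace theorem, which would forfeit all control of the $k$-dependence.

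First I would transfer this reference estimate to an arbitrary physical element $K \in \cT_h$ through the map $F_K : \hat K \to K$. Under the associated scaling the volume norm behaves like $h_K^{d/2}$ and the surface norm like $h_K^{(d-1)/2}$, so that, using shape-regularity to bound the Jacobian factors uniformly, the reference inequality becomes
\[
\norm{q}^2_{0,\partial K} \;\le\; C\,\frac{(k+1)^2}{h_K}\,\norm{q}^2_{0,K} \qquad \forall q \in \cP_k(K),
\]
with $C$ independent of $K$, $h$, and $k$.

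Next I would assemble the global quantity. By definition of the $\partial\cT_h$ norm,
\[
\norm*{\tfrac{h^{\sfrac12}_{\cF}}{k+1}\,q}^2_{0,\partial \cT_h} \;=\; \sum_{K \in \cT_h} \sum_{F \in \cF(K)} \frac{h_F}{(k+1)^2}\,\norm{q}^2_{0,F}.
\]
Invoking local quasi-uniformity \eqref{reguT}, namely $h_F \le h_K$ for every $F \in \cF(K)$, each face weight is dominated by $h_K/(k+1)^2$, so the inner sum over the faces of $K$ reconstructs $\norm{q}^2_{0,\partial K}$ and the scaled elementwise estimate gives
\[
\norm*{\tfrac{h^{\sfrac12}_{\cF}}{k+1}\,q}^2_{0,\partial \cT_h} \;\le\; \sum_{K \in \cT_h} \frac{h_K}{(k+1)^2}\,\norm{q}^2_{0,\partial K} \;\le\; C \sum_{K \in \cT_h} \norm{q}^2_{0,K} \;=\; C\,\norm{q}^2_{0,\cT_h}.
\]
Taking square roots yields \eqref{discTrace}.

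The main obstacle is entirely localized in the first step: securing the $(k+1)^2$ growth in the reference trace inequality. The subsequent scaling and summation are routine, but the polynomial-degree dependence is delicate and must be imported from $hp$-approximation theory rather than derived by soft trace arguments. A secondary technical point is ensuring that the reference constant is uniform across the two admissible reference shapes (simplex and tensor-product cell), which is handled by treating each shape separately and taking the larger constant.
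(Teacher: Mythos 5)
Your proof is correct and is the standard argument underlying this inequality; the paper itself does not prove the lemma but simply cites \cite[Lemma 3.2]{meddahi2023hp}, whose proof follows exactly the route you describe (a reference-element polynomial trace inequality with explicit $(k+1)^2$ growth, of Warburton--Hesthaven type, followed by an affine scaling to $K$ and a sum over elements using $h_F\le h_K$ from \eqref{reguT}). You correctly isolate the one non-routine ingredient, namely the sharp $k$-dependence on the reference element, which for simplices is the bound $\norm{\hat q}^2_{0,\partial\hat K}\le \tfrac{(k+1)(k+d)}{d}\tfrac{|\partial\hat K|}{|\hat K|}\norm{\hat q}^2_{0,\hat K}$ and has a tensor-product analogue for the cube, so the uniformity over the two reference shapes is indeed unproblematic.
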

\begin{proof}
	See \cite[Lemma 3.2]{meddahi2023hp}.
\end{proof}

For any integer $\ell\geq 0$ and element $K\in \cT_h$, $\Pi_K^\ell$ denotes the $L^2(K)$-orthogonal projection onto $\cP_{\ell}(K)$. The global projection $\Pi^\ell_\cT$ from $L^2(\Omega)$ onto $\cP_\ell(\cT_h)$ is defined element-wise by $(\Pi^\ell_\cT v)|_K = \Pi_K^\ell(v|_K)$ for all $K\in \cT_h$. Similarly, the global projection $\Pi_\cF^\ell$ from $L^2(\cF_h)$ onto $\cP_{\ell}(\cF_h)$ is defined face-wise by $(\Pi^\ell_\cF \hat v)|_{F} = \Pi_F^\ell(\hat v|_F)$ for all $F\in \cF_h$, where $\Pi^\ell_F$ is the $L^2(F)$-orthogonal projection onto $\cP_\ell(F)$. In the subsequent analysis, the notation $\Pi_\cT^\ell$ will also refer to the $L^2$-orthogonal projection onto $\mathcal{P}_\ell(\mathcal{T}_h, \mathbb{R}^{m\times n})$. It is important to note that the tensorial version of $\Pi_\cT^\ell$ inherently preserves matrix symmetry, as it is derived by applying the scalar operator component-wise. Likewise, $\Pi_\cF^\ell$ will also denote the $L^2$-orthogonal projection onto $\mathcal{P}_\ell(\mathcal{F}_h, \mathbb{R}^{m\times n})$.

We employ the following two approximation error estimates as key ingredients in deriving our asymptotic $hp$ error bounds. A detailed proof of these results can be found in \cite[Section 3]{meddahi2023hp} and the references therein.

\begin{lemma}[Approximation Property of $\Pi_\cT^k$]\label{lem:maintool}
	There exists a constant $C>0$ independent of $h$ and $k$ such that
	\begin{equation}\label{tool1}
		\norm{q - \Pi^k_\cT q}_{0,\cT_h} + \norm{ \tfrac{h^{\sfrac{1}{2}}_{\cF}}{k+1}(q - \Pi_\cT^k q)}_{0,\partial \cT_h}
		\leq
		C \tfrac{h_K^{\min\{ r, k \}+1}}{(k+1)^{r+1}} \norm{q}_{1+r,\Omega},
	\end{equation}
	for all $q \in H^{1+r}(\Omega)$, with $r\geq 0$.
\end{lemma}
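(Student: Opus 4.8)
The plan is to localize the estimate to a single element $K \in \cT_h$ and to combine two ingredients: the $hp$-version approximation properties of the $L^2(K)$-orthogonal projection $\Pi_K^k$ (Babuška–Suri type estimates, as collected in the cited references) and a scaled multiplicative trace inequality. Squaring and summing the resulting element-wise bounds over $\cT_h$ then yields the global estimate \eqref{tool1}, using $\sum_{K}\norm{q}^2_{1+r,K} = \norm{q}^2_{1+r,\Omega}$.

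For both the volume and the trace contributions I would invoke the element-wise $hp$-estimate
\[
\norm{q - \Pi_K^k q}_{s,K} \lesssim \frac{h_K^{\min\{r,k\}+1-s}}{(k+1)^{r+1-s}}\norm{q}_{1+r,K}, \qquad s \in \{0,1\},
\]
valid for $q \in H^{1+r}(K)$ with $r \ge 0$ (here the $h$-exponent saturates at $k+1$ when $r > k$, while the $(k+1)$-exponent does not). The case $s=0$ controls the first term of \eqref{tool1} directly after summation over $K \in \cT_h$.

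For the boundary term I would apply the continuous trace inequality $\norm{v}^2_{0,\partial K} \lesssim h_K^{-1}\norm{v}^2_{0,K} + h_K \norm{\nabla v}^2_{0,K}$ to $v := q - \Pi_K^k q$ and multiply by the prefactor $h_F/(k+1)^2$. Since $h_F \le h_K$ by \eqref{reguT}, the first contribution is bounded by $(k+1)^{-2}\norm{q-\Pi_K^k q}^2_{0,K}$, which is of higher order than required, while the second is bounded by $\frac{h_K^2}{(k+1)^2}\norm{\nabla(q-\Pi_K^k q)}^2_{0,K}$. Inserting the estimate above with $s=0$ and $s=1$ respectively shows that both contributions are controlled by $\frac{h_K^{2(\min\{r,k\}+1)}}{(k+1)^{2(r+1)}}\norm{q}^2_{1+r,K}$; summing over $K$ closes the bound for the second term of \eqref{tool1}.

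The main obstacle is not the routine assembly above but the $k$-explicit ($p$-version) dependence of the constant in the element-wise estimate, specifically the sharp power $(k+1)^{r+1}$ in the denominator together with the saturation $\min\{r,k\}+1$ of the $h$-exponent. Establishing these requires reference-element analysis based on expansions in Jacobi/Legendre polynomials and their decay properties; here I would simply invoke the results of \cite[Section 3]{meddahi2023hp} and the references therein, after which only the scaled trace inequality and the summation over elements remain, both of which are standard.
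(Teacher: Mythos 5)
The paper itself offers no argument here beyond the citation to \cite[Lemma 3.3]{meddahi2023hp}, so the real question is whether your reconstruction of that lemma is sound. Your treatment of the volume term is fine, and the localize--estimate--sum strategy is the right one. The gap is in the boundary term: the element-wise estimate you invoke with $s=1$, namely $\abs{q-\Pi_K^k q}_{1,K}\lesssim h_K^{\min\{r,k\}}(k+1)^{-r}\norm{q}_{1+r,K}$, is not available for the $L^2(K)$-orthogonal projection. While $\Pi_K^k$ is $k$-optimal in $L^2$, it is known to be suboptimal in the $H^1$-(semi)norm with respect to the polynomial degree: already in one dimension the truncated Legendre expansion satisfies only a bound of the type $\norm{q-\Pi^k q}_{1}\lesssim (k+1)^{1/2-r}\norm{q}_{1+r}$ (Canuto--Quarteroni), a loss of at least $(k+1)^{1/2}$ over the best $H^1$ approximation, and this loss is sharp; the one-dimensional example embeds into any $d$. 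Feeding the true $H^1$ rate into your multiplicative trace inequality yields only $\norm{\tfrac{h_\cF^{1/2}}{k+1}(q-\Pi_\cT^k q)}_{0,\partial\cT_h}\lesssim h^{\min\{r,k\}+1}(k+1)^{-(r+1/2)}\norm{q}_{1+r,\Omega}$, which misses the claimed $(k+1)^{-(r+1)}$ by half a power of $k$. Since the explicit $k$-dependence is the entire content of an $hp$ lemma (and the paper already tracks a separate $(k+1)^{1/2}$ loss in Remark~\ref{R1}), this is a genuine gap rather than a cosmetic one; your proposal correctly flags the $k$-explicit constants as the crux, but the specific $s=1$ ingredient you plan to cite is precisely the one that fails.

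The standard way to close the gap avoids the $H^1$ norm of the $L^2$ projection altogether. Introduce an $hp$-optimal interpolant $\pi_K q\in\cP_k(K)$ enjoying the Babu\v{s}ka--Suri volume and trace bounds $\norm{q-\pi_Kq}_{0,K}\lesssim h^{s+1}(k+1)^{-(r+1)}\norm{q}_{1+r,K}$ and $\norm{q-\pi_Kq}_{0,\partial K}\lesssim h^{s+1/2}(k+1)^{-(r+1/2)}\norm{q}_{1+r,K}$ with $s=\min\{r,k\}$, split $q-\Pi_K^kq=(q-\pi_Kq)+(\pi_Kq-\Pi_K^kq)$ on $\partial K$, and control the polynomial difference by the discrete trace inequality \eqref{discTrace} combined with the $L^2$-best-approximation property $\norm{q-\Pi_K^kq}_{0,K}\le\norm{q-\pi_Kq}_{0,K}$. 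This gives $\norm{q-\Pi_K^kq}_{0,\partial K}\lesssim h^{s+1/2}(k+1)^{-r}\norm{q}_{1+r,K}$, which after multiplication by $h_\cF^{1/2}/(k+1)$ and summation over the elements recovers exactly \eqref{tool1}. If you only cared about $h$-convergence at fixed $k$, your route would be perfectly adequate.
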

\begin{proof}
	See \cite[Lemma 3.3]{meddahi2023hp}.
\end{proof}

Finally, we consider $\mathcal{U}^c$, a subspace of $\mathcal{U}$ comprising functions with continuous traces across the inter-element boundaries of $\cT_h$:
$$
\mathcal{U}^c \coloneq \set*{\ubv = [\bv \mid \bw \mid \bs] \mid \bv \in H_0^1(\Omega,\mathbb{R}^d),\ \bw, \bs \in H(\bdiv,\Omega)\cap H^r(\Omega,\mathbb{R}^d)}.
$$

\begin{lemma}[Approximation Property for Continuous Traces]\label{maintool2}
	There exists a constant $C>0$ independent of $h$ and $k$ such that
	\begin{equation}\label{tool2}
		\abs*{ \Big(\underline{\bu} - \Pi^{k+1}_{\cT} \underline{\bu}, \ \underline{\bu}|_{\partial \cF_h} - \Pi^{k+1}_{\cF} (\underline{\bu}|_{\partial \cF_h}) \Big)}_{\mathcal{U} \times \hat{\mathcal{U}}}
		\leq
		C \tfrac{h^{\min\{ r, k \}+1}}{(k+1)^{r+\sfrac{1}{2}}} \norm{\underline{\bu}}_{2+r,\Omega},
	\end{equation}
	for all $\underline{\bu} \in \mathcal{U}^c\cap H^{2+r}(\Omega,\bbR^{d\times 2})$, $r \geq 0$.
\end{lemma}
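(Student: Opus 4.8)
The plan is to control separately the four contributions to the semi-norm \eqref{norm:sym}, bound each by a standard $hp$-approximation estimate for the $L^2$-projections $\Pi^{k+1}_\cT$ and $\Pi^{k+1}_\cF$, and then identify the slowest-decaying one as the binding term. Reusing the notation $\underline{\bchi}_u=\underline{\bu}-\Pi^{k+1}_\cT\underline{\bu}$ and $\underline{\bchi}_{\hat u}=\underline{\bu}|_{\cF_h}-\Pi^{k+1}_\cF(\underline{\bu}|_{\cF_h})$, the left-hand side of \eqref{tool2} splits into the three volume terms $\norm{\beps(\bchi_u)}^2_{0,\cT_h}$, $\norm{\div\bchi_q}^2_{0,\cT_h}$, $\norm{\div\bchi_r}^2_{0,\cT_h}$ and the face-jump term $\norm{\tfrac{k+1}{h_\cF^{\sfrac{1}{2}}}(\underline{\bchi}_u-\underline{\bchi}_{\hat u})}^2_{0,\partial\cT_h}$.

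First I would dispatch the three volume terms. Since $\beps$ and $\div$ are first-order operators, each is dominated by $\norm{\nabla\underline{\bchi}_u}_{0,\cT_h}$, and the $hp$-estimate for $\Pi^{k+1}_\cT$ applied to $\underline{\bu}\in H^{2+r}$ gives element-wise
\[
\norm{\nabla(\underline{\bu}-\Pi^{k+1}_\cT\underline{\bu})}_{0,K}\lesssim \frac{h_K^{\min\{r+1,\,k+1\}}}{(k+1)^{r+1}}\,\norm{\underline{\bu}}_{2+r,K}.
\]
Summing over $K\in\cT_h$ and recalling $\min\{r+1,k+1\}=\min\{r,k\}+1$, these terms enjoy the rate $h^{\min\{r,k\}+1}/(k+1)^{r+1}$, which decays strictly faster in the polynomial degree than the asserted bound; hence they are not binding.

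The heart of the argument is the face-jump term, which exploits the continuity of the traces of $\underline{\bu}\in\mathcal{U}^c$. Since each trace is single-valued, on a face $F\subset\partial K$ the contributions of $\underline{\bu}$ itself cancel and one has $\underline{\bchi}_u-\underline{\bchi}_{\hat u}=\Pi^{k+1}_\cF(\underline{\bu}|_F)-(\Pi^{k+1}_\cT\underline{\bu})|_F$. The decisive observation is that $(\Pi^{k+1}_\cT\underline{\bu})|_F$ is itself a polynomial of degree at most $k+1$ on $F$, so the $L^2(F)$-optimality of $\Pi^{k+1}_\cF$ yields $\norm{\underline{\bchi}_{\hat u}}_{0,F}\le\norm{\underline{\bchi}_u}_{0,F}$ (on boundary faces the homogeneous constraints make the velocity jump vanish, so the inequality holds there as well); a triangle inequality then reduces the face-jump term to $\norm{\tfrac{k+1}{h_\cF^{\sfrac{1}{2}}}\,\underline{\bchi}_u}_{0,\partial\cT_h}$. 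I would then invoke the $hp$-trace approximation estimate
\[
\norm{\underline{\bu}-\Pi^{k+1}_\cT\underline{\bu}}_{0,\partial K}\lesssim \frac{h_K^{\min\{r+1,\,k+1\}+\sfrac{1}{2}}}{(k+1)^{r+\sfrac{3}{2}}}\,\norm{\underline{\bu}}_{2+r,K},
\]
and, after multiplying by the weight $(k+1)/h_\cF^{\sfrac{1}{2}}$ and using the local quasi-uniformity \eqref{reguT} to replace $h_F$ by $h_K$, obtain exactly $h^{\min\{r,k\}+1}/(k+1)^{r+\sfrac{1}{2}}\,\norm{\underline{\bu}}_{2+r,\Omega}$ upon summation over the mesh.

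The main obstacle---and the precise mechanism behind the half-power loss in $k$ flagged in Remark~\ref{R1}---is this trace term: passing from the element interior to its boundary through the scaled $L^2$-trace inequality costs a factor $(k+1)^{\sfrac{1}{2}}$ relative to the interior gradient estimate, so the face-jump contribution dominates and fixes the sub-optimal denominator $(k+1)^{r+\sfrac{1}{2}}$. Collecting the four bounds, the slowest (trace) term determines the final estimate, which completes the proof.
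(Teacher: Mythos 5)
Your proposal is correct and follows essentially the same route as the paper, which simply defers to the proof of \cite[Lemma 3.4]{meddahi2023hp}: split the semi-norm \eqref{norm:sym} into the volume and face-jump contributions, use the continuity of the traces of $\underline{\bu}\in\mathcal{U}^c$ together with the $L^2(F)$-optimality of $\Pi^{k+1}_\cF$ to reduce the jump term to the trace of the element-wise projection error, and conclude with the sharp $hp$-trace estimate for the $L^2$-projection, which is precisely where the $(k+1)^{\sfrac12}$ loss noted in Remark~\ref{R1} enters. Your identification of the face term as the binding contribution and the volume terms as higher order in $k$ matches the mechanism underlying the cited result.
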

\begin{proof}
	The result is proved similarly to \cite[Lemma 3.4]{meddahi2023hp}.
\end{proof}

\bibliographystyle{plainnat}
\bibliography{cvBib.bib}

\begin{thebibliography}{30}
\providecommand{\natexlab}[1]{#1}
\providecommand{\url}[1]{\texttt{#1}}
\expandafter\ifx\csname urlstyle\endcsname\relax
  \providecommand{\doi}[1]{doi: #1}\else
  \providecommand{\doi}{doi: \begingroup \urlstyle{rm}\Url}\fi

\bibitem[Antonietti et~al.(2023)Antonietti, Bonetti, and Botti]{antonietti2023discontinuous}
Paola~F Antonietti, Stefano Bonetti, and Michele Botti.
\newblock Discontinuous galerkin approximation of the fully coupled thermo-poroelastic problem.
\newblock \emph{SIAM Journal on Scientific Computing}, 45\penalty0 (2):\penalty0 A621--A645, 2023.

\bibitem[Antonietti et~al.(2024)Antonietti, Botti, and Mazzieri]{antoniettiIMA}
Paola~F Antonietti, Michele Botti, and Ilario Mazzieri.
\newblock Discontinuous galerkin discretization of coupled poroelasticity–elasticity problems.
\newblock \emph{IMA Journal of Numerical Analysis}, page drae093, 12 2024.

\bibitem[Bause et~al.(2024)Bause, Anselmann, K\"ocher, and Radu]{bause2024}
Markus Bause, Mathias Anselmann, Uwe K\"ocher, and Florin~A. Radu.
\newblock Convergence of a continuous {G}alerkin method for hyperbolic-parabolic systems.
\newblock \emph{Comput. Math. Appl.}, 158:\penalty0 118--138, 2024.

\bibitem[Biot(1956)]{Biot1956b}
M.~A. Biot.
\newblock Thermoelasticity and irreversible thermodynamics.
\newblock \emph{J. Appl. Phys.}, 27:\penalty0 240--253, 1956.

\bibitem[Bonetti and Corti(2024)]{bonetti2024}
Stefano Bonetti and Mattia Corti.
\newblock Unified discontinuous galerkin analysis of a thermo/poro-viscoelasticity model, 2024.
\newblock URL \url{https://arxiv.org/abs/2411.19610}.

\bibitem[Bonetti et~al.(2023)Bonetti, Botti, Mazzieri, and Antonietti]{bonetti2023numerical}
Stefano Bonetti, Michele Botti, Ilario Mazzieri, and Paola~F Antonietti.
\newblock Numerical modeling of wave propagation phenomena in thermo-poroelastic media via discontinuous galerkin methods.
\newblock \emph{Journal of Computational Physics}, 489:\penalty0 112275, 2023.

\bibitem[Carcione et~al.(2019)Carcione, Cavallini, Wang, Ba, and Fu]{carcione2019}
José~M. Carcione, Fabio Cavallini, Enjiang Wang, Jing Ba, and Li-Yun Fu.
\newblock Physics and simulation of wave propagation in linear thermoporoelastic media.
\newblock \emph{Journal of Geophysical Research: Solid Earth}, 124\penalty0 (8):\penalty0 8147--8166, 2019.

\bibitem[Chen and Dargush(1995)]{chen1995}
J.~Chen and G.F. Dargush.
\newblock Boundary element method for dynamic poroelastic and thermoelastic analyses.
\newblock \emph{International Journal of Solids and Structures}, 32\penalty0 (15):\penalty0 2257--2278, 1995.

\bibitem[Cockburn et~al.(2009)Cockburn, Gopalakrishnan, and Lazarov]{Cockburn2009}
B.~Cockburn, J.~Gopalakrishnan, and R.~Lazarov.
\newblock Unified hybridization of discontinuous {G}alerkin, mixed, and continuous {G}alerkin methods for second order elliptic problems.
\newblock \emph{SIAM J. Numer. Anal.}, 47\penalty0 (2):\penalty0 1319--1365, 2009.

\bibitem[Du and Sayas(2020)]{duSayas2020}
Shukai Du and Francisco-Javier Sayas.
\newblock New analytical tools for {HDG} in elasticity, with applications to elastodynamics.
\newblock \emph{Math. Comp.}, 89\penalty0 (324):\penalty0 1745--1782, 2020.

\bibitem[Elyasi et~al.(2016)Elyasi, Goshtasbi, and Hashemolhosseini]{ayub}
Ayub Elyasi, Kamran Goshtasbi, and Hamid Hashemolhosseini.
\newblock A coupled thermo- hydro-mechanical simulation of reservoir co2 enhanced oil recovery.
\newblock \emph{Energy \& Environment}, 27\penalty0 (5):\penalty0 524--541, 2016.

\bibitem[Ern and Guermond(2021)]{ErnBook2021III}
Alexandre Ern and Jean-Luc Guermond.
\newblock \emph{Finite elements {III}---first-order and time-dependent {PDE}s}, volume~74 of \emph{Texts in Applied Mathematics}.
\newblock Springer, Cham, 2021.

\bibitem[Fang et~al.(2023)Fang, Feng, Zhou, Guo, Wang, and Gao]{fanfeng}
T.~Fang, Q.~Feng, R.~Zhou, C.~Guo, S.~Wang, and K.~Gao.
\newblock A coupled thermal-hydrological-mechanical model for geothermal energy extraction in fractured reservoirs.
\newblock \emph{J. Petrol Explor. Prod. Technol.}, 13\penalty0 (5):\penalty0 2315--2327, 2023.

\bibitem[Fu(2019)]{fu2019}
Guosheng Fu.
\newblock A high-order {HDG} method for the {B}iot's consolidation model.
\newblock \emph{Comput. Math. Appl.}, 77\penalty0 (1):\penalty0 237--252, 2019.

\bibitem[Gaspar et~al.(2003)Gaspar, Lisbona, and Vabishchevich]{gaspar2003}
F.J. Gaspar, F.J. Lisbona, and P.N. Vabishchevich.
\newblock A finite difference analysis of biot's consolidation model.
\newblock \emph{Applied Numerical Mathematics}, 44\penalty0 (4):\penalty0 487--506, 2003.

\bibitem[Houston et~al.(2002)Houston, Schwab, and S\"{u}li]{houston2002}
Paul Houston, Christoph Schwab, and Endre S\"{u}li.
\newblock Discontinuous {$hp$}-finite element methods for advection-diffusion-reaction problems.
\newblock \emph{SIAM J. Numer. Anal.}, 39\penalty0 (6):\penalty0 2133--2163, 2002.

\bibitem[Hungria(2019)]{hungria2019}
Allan Hungria.
\newblock \emph{Using {HDG}+ to {C}ompute {S}olutions of the 3{D} {L}inear {E}lastic and {P}oroelastic {W}ave {E}quations}.
\newblock ProQuest LLC, Ann Arbor, MI, 2019.
\newblock Thesis (Ph.D.)--University of Delaware.

\bibitem[Lee(2023)]{Lee2023}
Jeonghun~J. Lee.
\newblock Analysis of finite element methods for dynamic poroelasticity: low frequency waves.
\newblock \emph{J. Comput. Appl. Math.}, 419:\penalty0 Paper No. 114717, 12, 2023.

\bibitem[Lemoine et~al.(2013)Lemoine, Ou, and LeVeque]{Lemoine2013}
G.~I. Lemoine, M.~Y. Ou, and R.~J. LeVeque.
\newblock High-resolution finite volume modeling of wave propagation in orthotropic poroelastic media.
\newblock \emph{SIAM J. Sci. Comput.}, 35\penalty0 (1):\penalty0 B176--B206, 2013.

\bibitem[Meddahi(2023)]{meddahi2023hp}
Salim Meddahi.
\newblock An {$hp$} error analysis of a hybrid discontinuous mixed {G}alerkin method for linear viscoelasticity.
\newblock \emph{Comput. Methods Appl. Mech. Engrg.}, 417:\penalty0 Paper No. 116452, 18, 2023.

\bibitem[Meddahi(2025{\natexlab{a}})]{meddahi2025}
Salim Meddahi.
\newblock An $hp$ error analysis of {HDG} for dynamic poroelasticity, 2025{\natexlab{a}}.
\newblock URL \url{https://arxiv.org/abs/2503.08360}.

\bibitem[Meddahi(2025{\natexlab{b}})]{meddahi2025b}
Salim Meddahi.
\newblock Hybridizable discontinuous galerkin methods for coupled poro-viscoelastic and thermo-viscoelastic systems, 2025{\natexlab{b}}.
\newblock URL \url{https://arxiv.org/abs/2504.19250}.

\bibitem[Meddahi(2025{\natexlab{c}})]{meddahi2025c}
Salim Meddahi.
\newblock An $hp$ error analysis of {HDG} for linear fluid-structure interaction.
\newblock \emph{Journal of Scientific Computing}, 102\penalty0 (44), 2025{\natexlab{c}}.

\bibitem[Morency and Tromp(2008)]{morency}
Christina Morency and Jeroen Tromp.
\newblock Spectral-element simulations of wave propagation in porous media.
\newblock \emph{Geophysical Journal International}, 175\penalty0 (1):\penalty0 301--345, 10 2008.

\bibitem[Nguyen et~al.(2011)Nguyen, Peraire, and Cockburn]{nguyen2011}
N.~C. Nguyen, J.~Peraire, and B.~Cockburn.
\newblock High-order implicit hybridizable discontinuous {G}alerkin methods for acoustics and elastodynamics.
\newblock \emph{J. Comput. Phys.}, 230\penalty0 (10):\penalty0 3695--3718, 2011.

\bibitem[Pazy(1983)]{pazy}
A.~Pazy.
\newblock \emph{Semigroups of linear operators and applications to partial differential equations}, volume~44 of \emph{Applied Mathematical Sciences}.
\newblock Springer-Verlag, New York, 1983.

\bibitem[Putra and Furui(2023)]{putra}
Vaya Putra and Kenji Furui.
\newblock Phase-field modeling of coupled thermo-hydromechanical processes for hydraulic fracturing analysis in enhanced geothermal systems.
\newblock \emph{Energies}, 16\penalty0 (24), 2023.

\bibitem[Santos et~al.(2021)Santos, Carcione, and Ba]{SANTOS2021124907}
Juan~E. Santos, José~M. Carcione, and Jing Ba.
\newblock Existence and uniqueness of solutions of thermo-poroelasticity.
\newblock \emph{Journal of Mathematical Analysis and Applications}, 499\penalty0 (1):\penalty0 124907, 2021.

\bibitem[Santos and Ore\~na(1986)]{santos1986II}
Juan~Enrique Santos and Ernesto~Jorge Ore\~na.
\newblock Elastic wave propagation in fluid-saturated porous media. {II}. {T}he {G}alerkin procedures.
\newblock \emph{RAIRO Mod\'el. Math. Anal. Num\'er.}, 20\penalty0 (1):\penalty0 129--139, 1986.

\bibitem[Sch{\"o}berl(2014)]{schoberl2014c++}
Joachim Sch{\"o}berl.
\newblock C++ 11 implementation of finite elements in ngsolve.
\newblock \emph{Institute for analysis and scientific computing, Vienna University of Technology}, 30, 2014.

\end{thebibliography}

\end{document}